\theoremstyle{plain}
\newtheorem{theorem}{Theorem}
\newtheorem{proposition}[subsection]{Proposition}
\newtheorem{lemma}[subsection]{Lemma}
\newtheorem{corollary}[subsection]{Corollary}
\newtheorem{notation}[subsection]{Notation}
\theoremstyle{definition}
\newtheorem{definition}[subsection]{Definition}
\newtheorem{example}[subsection]{Example}
\newtheorem{remark}[subsection]{Remark}
\newtheorem{nothing*}[subsection]{}
\newcommand{\rien}[1]{}
\newcommand{\AVF}{ \operatorname{{\rm AVF}}}
\newcommand{\VFH}{ \operatorname{{\rm VF}_{hol}}}
\newcommand{\LieH}{ \operatorname{{\rm Lie}_{hol}}}
\newcommand{\VFA}{ \operatorname{{\rm VF}_{alg}}}
\newcommand{\LieA}{ \operatorname{{\rm Lie}_{alg}}}
\newcommand{\Aut}{ \operatorname{{\rm Aut}}}
\newcommand{\C}{\ensuremath{\mathbb{C}}}
\newcommand{\T}{\ensuremath{\mathbb{T}}}
\newcommand{\R}{\ensuremath{\mathbb{R}}}
\newcommand{\Z}{\ensuremath{\mathbb{Z}}}
\newcommand{\hgoth}{{\ensuremath{\mathfrak{h}}}}
\newcommand{\sgoth}{{\ensuremath{\mathfrak{s}}}}
\newcommand{\lgoth}{{\ensuremath{\mathfrak{l}}}}
\newcommand{\rgoth}{{\ensuremath{\mathfrak{r}}}}
\newcommand{\ggoth}{{\ensuremath{\mathfrak{g}}}}
\newcommand{\cR}{{\ensuremath{\mathcal{R}}}}
\newcommand{\Ker}{{\rm Ker} \,}
\renewcommand{\epsilon}{\varepsilon}
\renewcommand{\phi}{\varphi}
\begin{document}
\title[Algebraic density property of homogeneous spaces ]{Algebraic density property of homogeneous spaces}
\author{Fabrizio Donzelli}
\address{Department of Mathematics\\
University of Miami\\
Coral Gables, FL 33124 \ \ USA}
\email{f.donzelli@math.miami.edu}
\author{Alexander Dvorsky}
\address{Department of Mathematics\\
University of Miami\\
Coral Gables, FL 33124 \ \ USA}
\email{a.dvorsky@math.miami.edu}
\author{Shulim Kaliman}
\address{Department of Mathematics\\
University of Miami\\
Coral Gables, FL 33124 \ \ USA}
\email{kaliman@math.miami.edu}

\begin{abstract}
Let $X$ be an affine algebraic variety with a transitive action of the
algebraic automorphism group. Suppose that $X$ is equipped with several
fixed point free non-degenerate $SL_2$-actions satisfying some mild
additional assumption. Then we prove that the Lie algebra generated by
completely integrable algebraic vector fields on $X$ coincides with the set
of all algebraic vector fields. In particular, we show that apart from a few
exceptions this fact is true for any homogeneous space of form $G/R$ where $%
G $ is a linear algebraic group and $R$ is its proper reductive subgroup.
\end{abstract}

\maketitle

{\renewcommand{\thefootnote}{}
\footnotetext{
2000 \textit{Mathematics Subject Classification.} Primary: 32M05,14R20.
Secondary: 14R10, 32M25.}}

\vfuzz=2pt

\vfuzz=2pt

\section{Introduction}

In this paper we develop further methods introduced by F. Kutzschebauch and
the third author in \cite{KK2} which they used to obtain new results in the
Anders\' en-Lempert theory (\cite{A}, \cite{AL}). D. Varolin was the first
one to realize the importance of the following notion in that theory (\cite
{V1}).

\begin{definition}
\label{dense}

A complex manifold $X$ has the density property if in the
compact-open topology the Lie algebra $\LieH (X)$ generated by
completely integrable holomorphic vector fields on $X$ is dense in
the Lie algebra $\VFH (X)$ of all holomorphic vector fields on
$X$. An affine algebraic manifold $X$ has the algebraic density
property if the Lie algebra $\LieA (X)$ generated by completely
integrable algebraic vector fields on it coincides with the Lie
algebra $\VFA (X)$ of all algebraic vector fields on it (clearly,
the algebraic density property implies the density property).
\end{definition}

For any complex manifold with the density property the Anders\' en-Lempert
theory is applicable and its effectiveness in complex analysis was
demonstrated in several papers (e.g., see \cite{FR}, \cite{V1}, \cite{V2}).
Furthermore, Koll\'ar and Mangolte found a remarkable application of this
theory to real algebraic geometry \cite{KoMa}. However until recently the
class of manifolds for which this property was established was quite narrow
(mostly Euclidean spaces and semi-simple Lie groups, and homogeneous spaces
of semi-simple groups with trivial centers \cite{TV1}, \cite{TV2}). In \cite
{KK1} this class was enlarged by hypersurfaces of form $uv= p (\bar x)$ and
in \cite{KK2} by connected complex algebraic groups except for $%
\ensuremath{\mathbb{C}}_+$, $\ensuremath{\mathbb{C}}^*$ (for which the
density property is not true) and the higher dimensional tori (for which the
validity of this property is still unknown). Furthermore, it was shown in
\cite{KK1}, \cite{KK2} that these varieties have the algebraic density
property.

In this paper we study a smooth complex affine algebraic variety
$X$ with a transitive action of the algebraic automorphism group
$\Aut X$ (which is natural because complex manifolds with the
algebraic density property have transitive automorphism groups).
Though the facts we prove about such objects are rather
straightforward extension of \cite{KK2}, in combination with Lie
group theory they lead to a much wider class of homogeneous spaces
with the algebraic density property. Our new technique yields, in
particular, to the following.

\textbf{Theorem.} \emph{Let $G$ be a linear algebraic group and $R$ be its
proper reductive subgroup such that the homogeneous space $G/R$ is different
from $\ensuremath{\mathbb{C}}_+$, a torus, or a $\ensuremath{\mathbb{Q}}$%
-homology plane ${\ensuremath{\mathcal{P}}}$ with a fundamental group $%
\ensuremath{\mathbb{Z}}_2$. Then $G/R$ has the algebraic density property.}

Besides the criteria developed in \cite{KK2} the main new ingredient of the
proof is the Luna slice theorem. For convenience of readers we remind it in
Section 2 together with basic facts about algebraic quotients and some
crucial results from \cite{KK2}. In Section 3 we prove our main theorem. As
an application we obtain the Theorem before in section 4 using some
technical fact from the Lie group theory presented in the Appendix.

\emph{Acknowledgments.} We would like to thank Lev Kapitanski,
Frank Kutzschebauch, and William M. McGovern for inspiring
discussions and consultations.

\section{Preliminaries}

Let us fix some notation first. In this paper $X$ will be always a complex
affine algebraic variety and $G$ be an algebraic group acting on $X$, i.e. $%
X $ is a $G$-variety. The ring of regular functions on $X$ will be denoted
by $\ensuremath{\mathbb{C}} [X ]$ and its subring of $G$-invariant functions
by $\ensuremath{\mathbb{C}} [X]^G$.

\subsection{Algebraic (categorical) quotients.}

Recall that the algebraic quotient $X//G$ of $X$ with respect to the $G$%
-action is $Spec(\ensuremath{\mathbb{C}} [X]^G)$. By $\pi : X \to X//G$ we
denote the natural quotient morphism generated by the embedding $%
\ensuremath{\mathbb{C}} [X]^G \hookrightarrow \ensuremath{\mathbb{C}} [X]$.
The main (universal) property of algebraic quotients is that any morphism
from $X$ constant on orbits of $G$ factors through $\pi$. In the case of a
reductive $G$ several important facts (e.g., see \cite{Sch}, \cite{PV}, \cite
{D}, \cite{G}) are collected in the following.

\begin{proposition}
\label{redu} Let $G$ be a reductive group.

\textrm{(1)} The quotient $X//G$ is an affine algebraic variety which is
normal in the case of a normal $X$ and the quotient morphism $\pi
:X\rightarrow X//G$ is surjective.

\textrm{(2)} The closure of every $G$-orbit contains a unique closed orbit
and each fiber $\pi^{-1} (y)$ (where $y \in X//G$) contains also a unique
closed orbit $O$. Furthermore, $\pi^{-1} (y)$ is the union of all those
orbits whose closures contain $O$.

\textrm{(3)} In particular, if every orbit of the $G$-action on $X$ is
closed then $X//G$ is isomorphic to the orbit space $X/G$.

\textrm{(4)} The image of a closed $G$-invariant subset under $\pi$ is
closed.
\end{proposition}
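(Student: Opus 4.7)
The plan is to derive all four parts from two foundational facts about reductive groups: (a) the Hilbert--Nagata theorem, that $\C[X]^G$ is a finitely generated $\C$-algebra, and (b) the existence of a $\C[X]^G$-linear \emph{Reynolds operator} $R : \C[X] \to \C[X]^G$ that splits the inclusion. Fact (a), combined with the observation that $\C[X]^G$ is integrally closed in its field of fractions whenever $\C[X]$ is, immediately gives the algebraic half of part (1): $X//G = \Spec \C[X]^G$ is an affine variety of finite type, normal when $X$ is normal. Fact (b) supplies the key \emph{separation principle}: if $Y_1, Y_2 \subset X$ are disjoint $G$-invariant closed subsets with ideals $I_1, I_2 \subset \C[X]$, then $I_1 + I_2 = \C[X]$, and writing $1 = f_1 + f_2$ and applying $R$ yields $1 = R(f_1) + R(f_2)$ with $R(f_i) \in I_i \cap \C[X]^G$; thus the invariants separate $Y_1$ from $Y_2$ in $X//G$.

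Parts (2) and (4) then follow quickly. For (4), if $Y$ is closed and $G$-invariant with ideal $I_Y$, I would show $\pi(Y)$ coincides with the vanishing set of $I_Y \cap \C[X]^G$ in $X//G$: the containment is clear, and for a point $y$ outside $\pi(Y)$, the separation principle applied to the disjoint $G$-invariant closed sets $Y$ and $\pi^{-1}(y)$ produces an invariant vanishing on $Y$ but not at $y$. For the uniqueness half of (2), two disjoint closed orbits in a single fiber would be separated by an invariant, a contradiction. For existence, pick a $G$-invariant closed subset of $\pi^{-1}(y)$ of minimal dimension; by uniqueness it must be a single closed orbit $O$, and any other orbit in $\pi^{-1}(y)$ must contain $O$ in its closure, else the separation principle applied to that closure and $O$ would contradict their lying over the same point. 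Surjectivity of $\pi$, the remaining claim in (1), follows analogously: if $\mgoth \subset \C[X]^G$ were a maximal ideal with $\mgoth \C[X] = \C[X]$, then writing $1 = \sum a_i b_i$ with $a_i \in \mgoth$, $b_i \in \C[X]$, and applying $R$ gives $1 = \sum a_i R(b_i) \in \mgoth$, a contradiction. Part (3) is then immediate: when every orbit is closed, each fiber of $\pi$ is a single orbit by (2), so $\pi$ identifies $X//G$ set-theoretically with $X/G$, and the universal property promotes this to the stated isomorphism.

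The main obstacle is establishing fact (b) for an arbitrary reductive $G$ acting on the infinite-dimensional algebra $\C[X]$. Over $\C$ I would invoke the \emph{unitarian trick}: choose a maximal compact subgroup $K \subset G$, so that $K$-invariants and $G$-invariants coincide on every rational $G$-module. Write $\C[X]$ as the directed union of its finite-dimensional $G$-stable subspaces $V_\alpha$, define $R$ on each $V_\alpha$ by Haar-measure averaging over $K$, and verify that the resulting projections are compatible under inclusion and $\C[X]^G$-linear on the union. With (a) and (b) in hand, items (1)--(4) unfold as above with no essential additional input, which is why the authors can legitimately attribute this proposition to the standard GIT references.
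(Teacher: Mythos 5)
The paper gives no proof of this proposition at all: it is presented as a collection of standard facts with pointers to \cite{Sch}, \cite{PV}, \cite{D}, \cite{G}, and your argument --- finite generation of invariants plus a $\ensuremath{\mathbb{C}}[X]^G$-linear Reynolds operator, yielding the separation of disjoint closed $G$-invariant subsets by invariant functions, from which surjectivity of $\pi$, closedness of images, and the unique-closed-orbit structure of the fibers all follow --- is exactly the standard proof contained in those references. It is correct as written; the only loosely phrased step is the existence of a closed orbit in each fiber, which is cleanest if you take an orbit of minimal dimension in $\pi^{-1}(y)$ and observe that its boundary is a union of strictly lower-dimensional orbits lying in the same fiber, hence empty.
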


If $X$ is a complex algebraic group, and $G$ is a closed subgroup acting on $X$
by multiplication,
clearly all the orbits are closed. If $G$ is reductive, the previous
proposition implies that the quotient $X/G$ is affine. The next proposition
(Matsushima's criterion) shows that the converse is also true.

\begin{proposition}
\label{matsu} Let $G$ be a complex reductive group, and $H$ be a closed
subgroup of $G$. Then the quotient space $G/H$ is affine if and only if $H$
is reductive.
\end{proposition}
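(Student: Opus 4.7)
The plan is to prove the two implications separately: the ($\Leftarrow$) direction follows quickly from the quotient machinery of Proposition \ref{redu}, while the ($\Rightarrow$) direction is the substantive content and requires realizing $H$ as the stabilizer of a point with closed orbit in a linear representation of $G$.

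For the direction ($\Leftarrow$), I would take $H$ reductive and let $H$ act on $G$ by right translation $(h,g)\mapsto gh$. The orbits of this action are the left cosets $gH$, each closed in $G$. Proposition \ref{redu} then says that $G//H = \Spec \C[G]^H$ is an affine variety, and part (3) of that proposition identifies it with the orbit space $G/H$; hence $G/H$ is affine. For the direction ($\Rightarrow$), I would assume $G/H$ is affine. Since $G$ is reductive, its rational action on the finitely generated algebra $\C[G/H]$ is locally finite, so I can choose a finite-dimensional $G$-submodule $M \subset \C[G/H]$ whose elements generate $\C[G/H]$ as an algebra. The evaluation map $\iota : G/H \to M^*$, $p \mapsto (f \mapsto f(p))$, is then a closed $G$-equivariant embedding. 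Setting $v = \iota(eH)$, the orbit $G \cdot v$ equals the closed image $\iota(G/H)$, so $G \cdot v$ is closed in $M^*$ and its stabilizer is exactly $H$.

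The key remaining step, and the main obstacle, is to deduce from this geometric picture that $H$ must be reductive. I would invoke the Kempf--Ness theorem: fix a maximal compact subgroup $K \subset G$ together with a $K$-invariant Hermitian inner product on $M^*$; closedness of $G \cdot v$ guarantees the existence of a point $v_0 \in G\cdot v$ of minimum norm. A standard moment-map computation at $v_0$ then identifies $G_{v_0}$ with the complexification of the compact group $K \cap G_{v_0}$, proving that $G_{v_0}$ is reductive; since $H = G_v$ is conjugate to $G_{v_0}$, the group $H$ is reductive. An alternative completion of this step is by direct appeal to the Matsushima--Richardson theorem, which asserts that the stabilizer of a point in a closed orbit of a reductive group acting on an affine variety is reductive, and which is the analytic heart of the criterion.
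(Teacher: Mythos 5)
Your proof is correct, but it does substantially more than the paper does: the paper offers no proof of this proposition at all. The forward-to-affine direction ($\Leftarrow$) is disposed of in the paragraph immediately preceding the statement, exactly as you do it --- cosets of a closed subgroup are closed orbits, so Proposition \ref{redu} (1) and (3) give that $G//H = G/H$ is affine --- and the hard direction ($\Rightarrow$) is simply quoted as ``Matsushima's criterion'' with the surrounding references (\cite{Sch}, \cite{PV}, \cite{D}, \cite{G}) standing in for a proof. Your completion of the hard direction is the standard modern argument and is sound: realizing $G/H$ as a closed orbit $G\cdot v$ in a finite-dimensional $G$-module via a generating submodule of $\C[G/H]$, and then applying Kempf--Ness at a minimal-norm point to identify the stabilizer with the complexification of a compact group (equivalently, invoking Matsushima--Richardson directly). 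The one thing to keep explicit is that $G_{v_0}=g_0 G_v g_0^{-1}$ for $v_0=g_0\cdot v$, so reductivity transfers from $G_{v_0}$ to $H=G_v$ by conjugation --- you note this, so nothing is missing. It is worth observing that your toolkit is consistent with the rest of the paper, which invokes the Kempf--Ness set explicitly in the proof of Proposition \ref{fabrizio.26}; so your argument could be viewed as making precise a dependence the authors already accept, rather than importing genuinely foreign machinery.
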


Besides reductive groups actions in this paper, a crucial role
will be played by $\ensuremath{\mathbb{C}}_+$-actions. In general
algebraic quotients in this case are not affine but only
quasi-affine \cite{W}. However, we
shall use later the fact that for the natural action of any $\ensuremath{\mathbb{C}}%
_+$-subgroup of $SL_2$ generated by multiplication one has $SL_2//%
\ensuremath{\mathbb{C}}_+\cong \ensuremath{\mathbb{C}}^2$.

\subsection{Luna's slice theorem (e.g., see (\protect\cite{D},\protect\cite
{PV}).}

%
%
Let us remind some terminology first. Suppose that $f :X\rightarrow Y$ is a $%
G$-equivariant morphism of affine algebraic $G$-varieties $X$ and $Y$. Then
the induced morphism $f_G:X//G\rightarrow Y//G$ is well defined and the
following diagram is commutative.

\begin{equation}
\begin{CD} X @>f >> Y\\ @VVV @VVV\\ X//G @>f_G >> Y//G \end{CD}
\end{equation}

\hspace{0.2cm}

\begin{definition}
A $G$-equivariant morphism $f$ is called strongly \' etale if

(1) The induced morphism $f_G: X//G\rightarrow Y//G $ is \' etale

(2) The quotient morphism $\pi_G :X\rightarrow X//G$ induces a $G$-
isomorphism between $X$ and the fibred product $Y\times_{Y//G}(X//G)$.
\end{definition}

From the properties of \'etale maps (\cite{D}) it follows that $f$ is \'
etale (in particular, quasi-finite).

Let $H$ be an algebraic subgroup of $G$, and $Z$ an affine $H$-variety. We
denote $G\times_H Z$ the quotient of $G \times Z$ by the action of $H$ given
by $h(g,z)=(gh^{-1},hz)$. The left multiplication on $G$ generates a left
action on $G\times_H Z$. The next lemma is an obvious consequence of \ref
{redu}.

\begin{lemma}
\label{twist} Let $X$ be an affine $G$-variety and $G$ be reductive. Then
the $H$-orbits of $G\times X$ are all isomorphic to $H$. Therefore the
fibers of the quotient morphism $G\times X\rightarrow G\times_H X$ coincide
with the $H$-orbits.
\end{lemma}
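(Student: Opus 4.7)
The plan is to prove both assertions by transporting the $H$-action on $G\times X$ through a convenient change of coordinates that straightens it. I will introduce the twist morphism $\Phi\colon G\times X\to G\times X$ defined by $\Phi(g,x)=(g,gx)$, with algebraic inverse $(g,y)\mapsto(g,g^{-1}y)$; this is a well-defined automorphism of the affine variety $G\times X$ precisely because $X$ is a $G$-variety. A direct check shows that under $\Phi$ the given action $h\cdot(g,x)=(gh^{-1},hx)$ corresponds on the target to the much simpler action $h\cdot(g,y)=(gh^{-1},y)$, in which $H$ acts only on the first factor by right translation.

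With the action in this simpler form, the $H$-orbit of a point $(g,y)$ is exactly $gH\times\{y\}$. Since $H$ is closed in $G$, the left coset $gH$ is closed in $G$, so the orbit is closed in $G\times X$; moreover, left multiplication by $g$ identifies $H$ with $gH$, and hence $H$ with the orbit. Transporting back through $\Phi^{-1}$, every $H$-orbit of the original action is closed in $G\times X$ and isomorphic to $H$, which gives the first assertion. (One can also see freeness directly: $(gh^{-1},hz)=(g,z)$ forces $h=e$.)

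For the second assertion, the reductive group $H$ (which is reductive in the Luna slice setting where this lemma is applied) now acts on the affine variety $G\times X$ with all orbits closed, so Proposition \ref{redu}(3) identifies the algebraic quotient $(G\times X)//H$ with the orbit space. Since $G\times_H X$ is defined as this quotient, the fibres of the quotient morphism $G\times X\to G\times_H X$ are exactly the $H$-orbits. The main conceptual point rather than a technical obstacle is spotting the straightening isomorphism $\Phi$; once it is in hand, both conclusions reduce to elementary properties of cosets and to Proposition \ref{redu}(3).
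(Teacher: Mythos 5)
Your proof is correct, and it in fact supplies details that the paper omits entirely: the paper simply declares Lemma \ref{twist} ``an obvious consequence of Proposition \ref{redu},'' with the intended argument being exactly yours (the orbits are free and closed, hence isomorphic to $H$ and equal to the fibres by Proposition \ref{redu}). Your straightening automorphism $\Phi(g,x)=(g,gx)$ is a clean way to make this visible, and your parenthetical remark that $H$ must be reductive for the second assertion (as it is in the Luna-slice application, where $H=G_x$ is reductive by Matsushima) correctly flags the one hypothesis the lemma's statement leaves implicit.
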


The isotropy group of a point $x\in X$ will be denoted by $G_x$. Recall also
that an open set $U$ of $X$ is called saturated if $\pi_G^{-1}(\pi_G(U))=U$.
We are ready to state the Luna slice theorem.

\begin{theorem}
\label{Luna} Let $G$ be a reductive group acting on an affine algebraic
variety $X$, and let $x\in X$ be a point in a closed $G$-orbit. Then
there exists a locally closed affine algebraic subvariety $V$ (called a
slice) of $X$ containing $x$ such that

(1) $V$ is $G_x$-invariant;

(2) the image of the $G$-morphism $\phi : G\times_{G_x}V$ induced by the
action is a saturated open set $U$ of $X$;

(3) the restriction $\phi : G\times_{G_x}V\rightarrow U$ is strongly \'
etale. \hspace{0.1cm}

\end{theorem}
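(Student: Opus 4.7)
The plan is to produce the slice in two stages. First I would construct a $G_x$-invariant locally closed affine subvariety $V_0 \ni x$ on which the canonical map $\phi_0 : G \times_{G_x} V_0 \to X$ is \'etale at $(e,x)$; then I would shrink $V_0$ to an affine $G_x$-stable $V$ such that the restricted $\phi$ satisfies the two conditions of \emph{strongly \'etale} on a full $G$-saturated open neighborhood of $G\cdot x$. The whole argument rests on reductivity of the isotropy $G_x$: since $G\cdot x$ is closed in the affine variety $X$, it is itself affine, so $G/G_x$ is affine, and Matsushima's criterion (Proposition \ref{matsu}) forces $G_x$ to be reductive. This freedom to split $G_x$-representations into invariant summands will be used repeatedly in what follows.

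For the first stage I would embed $X$ as a closed $G$-subvariety of a finite-dimensional $G$-module $W$ (possible because $\C[X]$ is a locally finite $G$-module, and any finite-dimensional generating $G$-submodule yields such an embedding). Using complete reducibility of $G_x$, choose a $G_x$-invariant complement $N$ to $T_x(G\cdot x)$ inside $T_xX$. Standard equivariant algebraic geometry (averaging an affine projection over $G_x$, or cutting $X$ by a suitable $G_x$-stable set of equations) then produces a $G_x$-invariant locally closed affine $V_0 \subset X$ through $x$ with $T_xV_0 = N$. A direct tangent-space computation gives $T_{(e,x)}(G \times_{G_x} V_0) \cong \Lie(G)/\Lie(G_x) \oplus N = T_x(G\cdot x) \oplus N = T_xX$, so the differential of $\phi_0$ at $(e,x)$ is an isomorphism, and $G$-equivariance then makes $\phi_0$ \'etale on a $G$-invariant Zariski neighborhood of the orbit through $(e,x)$.

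The main obstacle is upgrading this local \'etaleness to the \emph{strongly} \'etale property; this is Luna's so-called \emph{fundamental lemma}. The idea is to control the induced quotient map $\phi_{0,G} : V_0//G_x \to X//G$ at the same time. Because $\{x\}$ is the closed orbit in its fiber and $\phi_0$ is \'etale near $(e,x)$, one shows $\phi_{0,G}$ is \'etale at $\pi_{G_x}(x)$; pulling back an affine \'etale neighborhood of this point in $V_0//G_x$ through $\pi_{G_x}$ produces a $G_x$-saturated affine $V \subset V_0$ for which the induced quotient map is \'etale, yielding condition (1). For condition (2) one must identify $G \times_{G_x} V$ with the fibered product $X \times_{X//G} (V//G_x)$. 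Both sides are affine $G$-varieties mapping to $X$ and to $V//G_x$ in the same way; by Proposition \ref{redu} each fiber of a reductive quotient contains a unique closed $G$-orbit, so both sides exhibit the same closed orbits over every point of $V//G_x$, and the already-established \'etaleness promotes this set-theoretic matching to a scheme-theoretic $G$-isomorphism. Saturation of $U = \phi(G\times_{G_x}V)$ is then immediate from the fibered-product description. The delicate point, where I expect to spend the most care, is verifying the fibered-product identification globally on the shrunken base: this is the heart of Luna's lemma, and it crucially uses reductivity of $G$ itself (not merely of $G_x$) to transfer information between $X$ and the categorical quotient $X//G$.
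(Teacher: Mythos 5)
The paper does not actually prove Theorem \ref{Luna}: it is recalled from the literature (the subsection header points to \cite{D} and \cite{PV}), so there is no in-paper argument to compare yours against. Judged on its own terms, your outline is the standard proof of Luna's theorem, and its architecture is correct: reductivity of $G_x$ via Matsushima (Proposition \ref{matsu}) applied to the closed, hence affine, orbit $G\cdot x\cong G/G_x$; an equivariant closed embedding of $X$ into a finite-dimensional $G$-module $W$; a $G_x$-stable linear slice; and then Luna's fundamental lemma to pass from \'etale to strongly \'etale over a saturated neighborhood.

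Two steps need repair before this is a proof. First, the theorem is stated for an arbitrary affine variety $X$, and for singular $X$ your tangent-space computation does not give \'etaleness of $\phi_0$ at $(e,x)$: an isomorphism of Zariski tangent spaces between possibly singular varieties does not imply \'etale, since flatness is not detected by tangent spaces. The standard fix uses the embedding $X\hookrightarrow W$ you already introduced but then did not exploit: take $N$ a $G_x$-stable complement of $T_x(G\cdot x)$ in $W$, note that $G\times_{G_x}(x+N)\to W$ is a morphism of \emph{smooth} varieties with invertible differential at $(e,x)$, hence \'etale near that point, and then obtain $\phi_0:G\times_{G_x}V_0\to X$ with $V_0=X\cap(x+N)$ as the base change of that map along $X\hookrightarrow W$, so it is again \'etale. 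Second, and more substantially, the passage from ``\'etale near $(e,x)$'' to ``strongly \'etale over a saturated open set'' --- that is, \'etaleness of the induced map of quotients together with the identification $G\times_{G_x}V\simeq U\times_{U//G}(V//G_x)$ --- is precisely Luna's fundamental lemma, and your text asserts its conclusions (``one shows $\phi_{0,G}$ is \'etale,'' ``the already-established \'etaleness promotes this set-theoretic matching to a scheme-theoretic $G$-isomorphism'') without supplying the argument. The actual proof requires first shrinking so that $\phi_0$ becomes finite onto a saturated affine open image (via Zariski's main theorem, together with the fact that images of closed invariant sets under reductive quotients are closed, Proposition \ref{redu}(4)), and then a comparison of rings of invariants; the observation that each fiber of a reductive quotient contains a unique closed orbit does not by itself upgrade a bijection on closed orbits to an isomorphism of schemes. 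Since this is exactly the step you flag as the heart of the matter, the proposal is an accurate roadmap of the known proof rather than a complete argument.
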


Given a saturated open set $U$, we will denote $\pi_G(U)$ by $U//G$. It
follows from \ref{redu} that $U//G$ is open. 
This theorem implies that the following diagram is commutative

\begin{equation}  \label{moon}
\begin{CD} G\times_{G_x}V @>>> U\\ @VVV @VVV\\ V//G_x @>>> U//G \end{CD}
\end{equation}
and $G\times_{G_x}V \simeq U \times_{U//G} V//G_x$.

%
%

\subsection{The compatibility criterion}

This section presents the criteria for the algebraic density property,
introduced in \cite{KK2}, that will be used to prove the main results of
this paper.


%


\begin{definition}
Let $X$ be an affine algebraic manifold. An algebraic vector field $\sigma$
on $X$ is semi-simple if its phase flow is an algebraic $\ensuremath{%
\mathbb{C}}^*$-action on $X$. A vector field $\delta$ is locally nilpotent
if its phase flow is an algebraic $\ensuremath{\mathbb{C}}_+$-action on $X$.
In the last case $\delta$ can be viewed as a locally nilpotent derivation on
$\ensuremath{\mathbb{C}} [X ]$. That is, for every nonzero $f \in %
\ensuremath{\mathbb{C}} [X]$ there is the smallest $n=n(f)$ for which $%
\delta^n (f)=0$. We set $\deg_{\delta} (f)=n-1$. In particular, elements
from the kernel $\mathrm{Ker} \, \delta$ have the zero degree with respect
to $\delta$.
\end{definition}


\begin{definition}
\label{compatible} Let $\delta_1$ and $\delta_2$ be nontrivial algebraic
vector fields on an affine algebraic manifold $X$ such that $\delta_1$ is a
locally nilpotent derivation on $\ensuremath{\mathbb{C}} [X]$, and $\delta_2$
is either also locally nilpotent or semi-simple. That is, $\delta_i$
generates an algebraic action of $H_i$ on $X$ where $H_1 \simeq %
\ensuremath{\mathbb{C}}_+$ and $H_2$ is either $\ensuremath{\mathbb{C}}_+$
or $\ensuremath{\mathbb{C}}^*$. We say that $\delta_1$ and $\delta_2$ are
semi-compatible if the vector space $\mathrm{Span} (\mathrm{Ker} \, \delta_1
\cdot \mathrm{Ker} \, \delta_2)$ generated by elements from $\mathrm{Ker} \,
\delta_1 \cdot \mathrm{Ker} \, \delta_2$ contains a nonzero ideal in $%
\ensuremath{\mathbb{C}} [X]$.

A semi-compatible pair is called compatible if in addition one of the
following condition holds

(1) when $H_2\simeq \ensuremath{\mathbb{C}}^*$ there is an element $a \in
\mathrm{Ker} \, \delta_2$ such that $\deg_{\delta_1} (a)=1$, i.e. $\delta_1
(a) \in \mathrm{Ker} \, \delta_1 \setminus \{ 0 \}$;

(2) when $H_2 \simeq \ensuremath{\mathbb{C}}_+$ (i.e. both $\delta_1$ and $%
\delta_2$ are locally nilpotent) there is an element $a$ such that $%
\deg_{\delta_1} (a)=1$ and $\deg_{\delta_2} (a)\leq 1$.
\end{definition}

\begin{remark}
\label{commutative} If $[\delta_1 , \delta_2]=0$ then condition (1) and
condition (2) with $a \in \mathrm{Ker} \, \delta_2$ hold automatically.
\end{remark}

\begin{example}
\label{example.01} Consider $SL_2$ (or even $PSL_2$) with two natural $%
\ensuremath{\mathbb{C}}_+$-subgroups: namely, the subgroup $H_1$ (resp. $H_2$%
) of the lower (resp. upper) triangular unipotent matrices. Denote by
\begin{equation*}
A = \left(
\begin{array}{cc}
a_1 & a_2   \\
b_1 & b_2
\end{array}
\right)
\end{equation*}
an element of $SL_2$. Then the left multiplication generate actions of $H_1$
and $H_2$ on $SL_2$ with the following associated locally nilpotent
derivations on $\ensuremath{\mathbb{C}} [SL_2]$
\begin{equation*}
\delta_1=a_1\frac{\partial}{\partial b_1}+a_2\frac{\partial}{\partial b_2}
\end{equation*}
\begin{equation*}
\delta_2=b_1\frac{\partial}{\partial a_1}+b_2\frac{\partial}{\partial a_2}
\, .
\end{equation*}

Clearly, $\mathrm{Ker} \, \delta_1$ is generated by $a_1$ and $a_2$ while $%
\mathrm{Ker} \, \delta_2$ is generated by $b_1$ and $b_2$. Hence $\delta_1$
and $\delta_2$ are semi-compatible. Furthermore, taking $a=a_1b_2$ we see
that condition (2) of Definition \ref{compatible} holds, i.e. they are
compatible.

\end{example}

It is worth mentioning the following geometrical reformulation of
semi-compatibility which will be needed further.

\begin{proposition}
\label{reformulation} Suppose that $H_1$ and $H_2$ are as in Definition \ref
{compatible}, $X$ is a normal affine algebraic variety equipped with
nontrivial algebraic $H_i$-actions where $i=1,2$ (in particular, each $H_i$
generates an algebraic vector field $\delta_i$ on $X$). Let $X_i=X//H_i$ and
$\rho_i : X \to X_i$ the quotient morphisms. Set $\rho = (\rho_1, \rho_2) :
X \to Y:= X_1 \times X_2$ and $Z$ equal to the closure of $\rho (X)$ in $Y$.
Then $\delta_1$ and $\delta_2$ are semi-compatible iff $\rho : X \to Z$ is a
finite birational morphism.
\end{proposition}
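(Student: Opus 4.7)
The plan is to reduce the equivalence to a pure commutative-algebra statement by computing $\Span(\Ker \delta_1 \cdot \Ker \delta_2)$ explicitly as a subring of $\C[X]$ and then applying a standard conductor-type argument. Since $\delta_i$ is the infinitesimal generator of the $H_i$-action, we have $\Ker \delta_i = \C[X]^{H_i} = \rho_i^*(\C[X_i])$. Hence $\Span(\Ker \delta_1 \cdot \Ker \delta_2)$ is the image of the natural multiplication map $\C[X_1] \otimes_\C \C[X_2] \to \C[X]$. Because $\C[Y] = \C[X_1] \otimes_\C \C[X_2]$ and $\rho : X \to Z$ is dominant by definition of $Z$ (so $\rho^* : \C[Z] \hookrightarrow \C[X]$ is injective with image equal to $\rho^*(\C[Y])$), this span coincides with $A := \rho^*(\C[Z])$. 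Thus semi-compatibility is precisely the assertion that the subring $A \subseteq \C[X]$ contains a nonzero ideal of $\C[X]$.

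With that dictionary in place, I would prove the two implications. For $(\Leftarrow)$, suppose $\rho : X \to Z$ is finite and birational. Then $\C[X]$ is a finitely generated $A$-module and $\mathrm{Frac}(\C[X]) = \mathrm{Frac}(A)$, so generators $b_1, \ldots, b_n$ of $\C[X]$ over $A$ can be written as $b_i = u_i / v_i$ with $v_i \in A \setminus \{0\}$. Setting $v = \prod_i v_i$ gives $v \cdot \C[X] \subseteq A$, so $v \C[X]$ is a nonzero ideal of $\C[X]$ sitting inside $A$, establishing semi-compatibility.

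For $(\Rightarrow)$, suppose $A$ contains a nonzero ideal $J$ of $\C[X]$, and pick any nonzero $g \in J$. For arbitrary $f \in \C[X]$ we have $fg \in J \subseteq A$ and $g \in A$, so $f = (fg)/g \in \mathrm{Frac}(A)$; consequently $\mathrm{Frac}(\C[X]) = \mathrm{Frac}(A)$, which means $\rho : X \to Z$ is birational. Moreover $g \cdot \C[X] \subseteq J \subseteq A$, so $\C[X] \subseteq g^{-1} A$ as $A$-modules. Since $A \cong \C[Z]$ is a finitely generated $\C$-algebra, hence Noetherian, the cyclic $A$-module $g^{-1} A$ has $\C[X]$ as a finitely generated submodule, which says precisely that $\rho : X \to Z$ is finite.

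There is no substantial obstacle here once the identification $\Span(\Ker \delta_1 \cdot \Ker \delta_2) = \rho^*(\C[Z])$ is established; the remainder is essentially the observation that a dominant morphism of irreducible affine varieties is finite birational iff the pullback of the target's coordinate ring contains a nonzero ideal of the source's. The only genuinely delicate point to keep in mind is the finite generation of $\C[X]^{H_i}$, which is needed for $X_i$ (and hence $Y$ and $Z$) to be affine varieties: this is Proposition \ref{redu} when $H_i \simeq \C^*$, and is an implicit hypothesis in the $H_i \simeq \C_+$ case, satisfied in the applications under consideration.
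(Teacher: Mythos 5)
Your proof is correct. The paper itself states Proposition \ref{reformulation} without proof (it is imported from \cite{KK2}), and your argument is exactly the standard one used there: identify $\mathrm{Span}(\mathrm{Ker}\,\delta_1\cdot\mathrm{Ker}\,\delta_2)$ with the subring $\rho^*(\ensuremath{\mathbb{C}}[Z])\subseteq\ensuremath{\mathbb{C}}[X]$ and observe that a dominant morphism of irreducible affine varieties is finite and birational precisely when this subring contains a nonzero (conductor-type) ideal of $\ensuremath{\mathbb{C}}[X]$. You are also right to flag finite generation of $\ensuremath{\mathbb{C}}[X]^{H_i}$ in the $\ensuremath{\mathbb{C}}_+$ case as the one implicit hypothesis needed for $Z$ to be an affine variety and for the Noetherian step in the forward implication.
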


\begin{definition}
\label{tangential} A finite subset $M$ of the tangent space $T_xX$ at a
point $x$ of a complex algebraic manifold $X$ is called a generating set if
the image of $M$ under the action of the isotropy group (of algebraic
automorphisms) of $x$ generates $T_xX$.
\end{definition}

It was shown in \cite{KK2} that the existence of a pair of compatible
derivations $\delta_1$ and $\delta_2$ from Definition \ref{compatible}
implies that $\LieA (X)$ contains a $\ensuremath{%
\mathbb{C}}[X]$-submodule $I \delta_2$ where $I$ is a nontrivial ideal in $%
\ensuremath{\mathbb{C}}[X]$\footnote{
In the case of condition (2) in Definition \ref{compatible} this fact was
proven in \cite{KK2} only for $\deg_{\delta_2} (a)=0$ but the proof works
for $\deg_{\delta_2} (a)=1$ as well without any change.}.  This yields the central
criterion for algebraic density property  \cite
{KK2}.

\begin{theorem}
\label{density} Let $X$ be a smooth homogeneous (with respect to
$\Aut X$) affine algebraic manifold with finitely many pairs of
compatible vector fields $\{ \delta_1^k , \delta_2^k \}_{k=1}^m$
such that for some point $x_0 \in X$ vectors $\{ \delta_2^k (x_0)
\}_{k=1}^m$ form a generating
set. Then $\LieA (X)$ contains a nontrivial $%
\ensuremath{\mathbb{C}}
[X]$-module and $X$ has the algebraic density property.
\end{theorem}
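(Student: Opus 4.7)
The first conclusion is immediate from the compatibility criterion recalled in the text: for each $k$, the pair $(\delta_1^k,\delta_2^k)$ produces, via the result from \cite{KK2} in the footnote, a nontrivial ideal $I_k\subseteq\mathbb{C}[X]$ with $I_k\,\delta_2^k\subseteq\LieA(X)$ as a $\mathbb{C}[X]$-submodule; the sum $\mathcal{N}_0:=\sum_k I_k\delta_2^k$ is then the required nonzero $\mathbb{C}[X]$-submodule of $\LieA(X)$. Let $\mathcal{N}\supseteq\mathcal{N}_0$ be the \emph{largest} $\mathbb{C}[X]$-submodule of $\VFA(X)$ contained in $\LieA(X)$. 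Because pushforward by any $g\in\Aut X$ preserves both the Lie bracket and complete integrability, $g_*\mathcal{N}$ is again a $\mathbb{C}[X]$-submodule of $\LieA(X)$, so by maximality $g_*\mathcal{N}=\mathcal{N}$. Thus $\mathcal{N}$ is $\Aut X$-invariant.

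The annihilator $J:=\{f\in\mathbb{C}[X]:f\cdot\VFA(X)\subseteq\mathcal{N}\}$ inherits this invariance, so its zero set $V(J)\subseteq X$ is an $\Aut X$-invariant closed subvariety and, by homogeneity, is either empty or all of $X$. The first case gives $J=\mathbb{C}[X]$ and $\VFA(X)=J\cdot\VFA(X)\subseteq\mathcal{N}\subseteq\LieA(X)$, which is the density property. To rule out the second case, it suffices to show $J\ne 0$; since $\VFA(X)/\mathcal{N}$ is finitely generated over the Noetherian domain $\mathbb{C}[X]$, nonvanishing of $J$ is equivalent to this quotient being torsion, i.e.\ to $\mathcal{N}$ having generic rank $n=\dim X$. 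Because $\mathcal{N}$ is $\Aut X$-invariant and $X$ is homogeneous, the dimension of $\mathcal{N}_y:=\{V(y):V\in\mathcal{N}\}\subseteq T_yX$ is independent of $y$ and equals this generic rank, so it is enough to exhibit one $y\in X$ at which $\mathcal{N}_y=T_yX$.

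I would pick $y$ outside the proper closed subvariety $\bigcup_k V(I_k)$. For each $k$, some $f_k\in I_k$ satisfies $f_k(y)\ne 0$, and since $f_k\delta_2^k\in\mathcal{N}$, its value forces $\delta_2^k(y)\in\mathcal{N}_y$. More generally, the $\Aut X$-invariance of $\mathcal{N}$ gives $g_*(I_k)\cdot g_*(\delta_2^k)\subseteq\mathcal{N}$ for every $g\in\Aut X$, and whenever $g^{-1}(y)\notin V(I_k)$ this yields $(g_*\delta_2^k)(y)\in\mathcal{N}_y$. To extend to all $g\in\Aut X$ I would approximate: given arbitrary $g$ and $k$, choose an algebraic one-parameter subgroup $\phi_t\subseteq\Aut X$ with $\phi_0=e$ whose orbit through $g^{-1}(y)$ exits $V(I_k)$, apply the previous step to $g\phi_t$ for generic small $t$, and pass to the limit $t\to 0$ in the closed finite-dimensional subspace $\mathcal{N}_y\subseteq T_yX$; this produces $(g_*\delta_2^k)(y)\in\mathcal{N}_y$ for every $g$ and $k$. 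Writing any $g$ as $hg'$ with $h(x_0)=y$ and $g'\in G_{x_0}$ shows that $\{(g_*\delta_2^k)(y):g\in\Aut X,k\}$ is the image under the isomorphism $dh|_{x_0}$ of the generating set at $x_0$, hence spans $T_yX$, so $\mathcal{N}_y=T_yX$ and the density property follows. The main obstacle is the existence of the perturbing one-parameter subgroups used in the density step: it requires that the orbits of algebraic one-parameter subgroups of $\Aut X$ through any given point cannot all be confined to a single proper subvariety, a form of infinitesimal transitivity that is implicit in the paper's framework and will be verified in the concrete homogeneous-space applications.
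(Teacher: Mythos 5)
First, a structural caveat: the paper does not actually prove Theorem \ref{density} --- it is quoted from \cite{KK2}, the only in-text justification being the preceding remark that a compatible pair yields a $\C[X]$-submodule $I\delta_2\subseteq\LieA(X)$. So your attempt can only be measured against the strategy of \cite{KK2}, and you do reproduce its global architecture correctly: form an $\Aut X$-invariant $\C[X]$-submodule $\mathcal{N}$ of $\LieA(X)$ (your maximal $\mathcal{N}$ exists as the sum of all such submodules and is invariant), observe that the locus where $\mathcal{N}$ fails to equal $\VFA(X)$ is closed and $\Aut X$-invariant, hence empty or everything by homogeneity, and reduce via Nakayama to exhibiting one point $y$ with $\mathcal{N}_y=T_yX$. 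That framework, and the final reduction writing $g=hg'$ with $g'$ in the isotropy group of $x_0$, are sound.

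The genuine gap is exactly the step you flag at the end, and it cannot be deferred as ``implicit in the framework'': it is the heart of the matter. From $g_*(I_k\delta_2^k)=(I_k\circ g^{-1})\,g_*\delta_2^k\subseteq\mathcal{N}$ you obtain $(g_*\delta_2^k)(y)\in\mathcal{N}_y$ only when $g^{-1}(y)\notin V(I_k)$; but the automorphisms you actually need are of the form $hg'$ with $g'$ fixing $x_0$ and $h(x_0)=y$, for which $g^{-1}(y)=x_0$, and nothing in the hypotheses prevents $x_0\in V(I_k)$. Your proposed repair needs, for every $p=g^{-1}(y)$ and every $k$, an algebraic one-parameter subgroup of $\Aut X$ whose orbit through $p$ exits $V(I_k)$. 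Transitivity of the abstract group $\Aut X$ supplies single automorphisms moving $p$ off $V(I_k)$, not one-parameter families through the identity; and the one-parameter subgroups available for free (the flows of the $\delta_i^k$) need not move $p$ off $V(I_k)$, since they could all be tangent to it. Note also that precomposing $g$ with an automorphism $\psi$ that moves $g^{-1}(y)$ off $V(I_k)$ does not help: it replaces the vector $(g_*\delta_2^k)(y)$ by $(g_*\psi_*\delta_2^k)(y)$, which is a different element of $T_yX$. As written, your argument proves the theorem only under the extra assumption that the generating set can be taken at a point outside $\bigcup_k V(I_k)$, or that $X$ has enough algebraic one-parameter subgroups; neither is in the statement, and supplying this missing step is precisely the content of the proof in \cite{KK2} that the paper invokes.
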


As an application of this theorem we have the following.

\begin{proposition}
\label{prod.11} Let $X_1$ and $X_2$ be smooth homogeneous (with respect to
algebraic automorphism groups) affine algebraic varieties such that each $%
X_i $ admits a finite number of integrable algebraic vector fields $\{
\delta_i^k \}_{k=1}^{m_i}$ whose values at some point $x_i\in X_i$ form a
generating set and, furthermore, in the case of $X_1$ these vector fields
are locally nilpotent. Then $X_1 \times X_2$ has the algebraic density
property.
\end{proposition}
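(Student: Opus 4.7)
The plan is to apply Theorem~\ref{density} to $Y := X_1 \times X_2$. Write $\tilde\delta_i^k$ for the pull-back of $\delta_i^k$ along the projection $Y \to X_i$; these are integrable algebraic vector fields on $Y$, the $\tilde\delta_1^j$ are locally nilpotent, and $[\tilde\delta_1^j, \tilde\delta_2^k] = 0$ because they come from different factors. The variety $Y$ is homogeneous under $\Aut Y$, since $\Aut X_1 \times \Aut X_2$ already acts transitively. I would then use the pairs $\{(\tilde\delta_1^j, \tilde\delta_2^k)\}_{j,k}$.

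Compatibility of each such pair is straightforward: one has $1 \otimes \C[X_2] \subset \Ker \tilde\delta_1^j$ and $\C[X_1] \otimes 1 \subset \Ker \tilde\delta_2^k$, so the span of $\Ker \tilde\delta_1^j \cdot \Ker \tilde\delta_2^k$ already equals all of $\C[Y]$, which is a nonzero ideal; hence $\tilde\delta_1^j$ and $\tilde\delta_2^k$ are semi-compatible, and Remark~\ref{commutative} upgrades this to compatibility since the two fields commute.

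The main obstacle is the generating-set condition at $x_0 := (x_1, x_2)$. The vectors $\tilde\delta_2^k(x_0) = (0, \delta_2^k(x_2))$ all sit in the $T_{x_2}X_2$-summand of $T_{x_0}Y = T_{x_1}X_1 \oplus T_{x_2}X_2$, and the product isotropy $\Aut(X_1)_{x_1} \times \Aut(X_2)_{x_2}$ preserves this splitting, so it cannot by itself reach $T_{x_1}X_1$. My remedy will be to produce extra algebraic automorphisms in the isotropy of $x_0$ that mix the two summands: for each $j$ and each $g \in \C[X_2]$ vanishing at $x_2$, the vector field $V := (1 \otimes g)\tilde\delta_1^j$ is locally nilpotent (since $1 \otimes g \in \Ker \tilde\delta_1^j$) and vanishes at $x_0$, so its algebraic phase flow $\exp(tV)$ is an algebraic automorphism of $Y$ fixing $x_0$; a brief computation shows that its differential at $x_0$ acts by
\begin{equation*}
(u_1, u_2) \longmapsto \bigl(u_1 + t\, dg(u_2)\,\delta_1^j(x_1),\; u_2\bigr).
\end{equation*}
Applying this shear to $\tilde\delta_2^k(x_0)$ for some $k$ with $\delta_2^k(x_2) \neq 0$, and letting $g$ and $t$ vary so that the coefficient $t\,dg(\delta_2^k(x_2))$ ranges over $\C$, the $\C$-linear span of the isotropy orbit contains every vector $(c\,\delta_1^j(x_1), 0)$, for all $c \in \C$ and all $j$. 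Combined with the $\Aut(X_1)_{x_1}$-action on the first factor and the hypothesis that $\{\delta_1^j(x_1)\}_j$ is a generating set in $T_{x_1}X_1$, this exhausts $T_{x_1}X_1 \oplus 0$; the complementary $0 \oplus T_{x_2}X_2$ is recovered from the $\Aut(X_2)_{x_2}$-orbit of $\{\delta_2^k(x_2)\}_k$. Thus $\{\tilde\delta_2^k(x_0)\}_k$ is a generating set in $T_{x_0}Y$, and Theorem~\ref{density} delivers the algebraic density property for $Y$.
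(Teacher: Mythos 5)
Your argument is correct, and there is nothing in the paper to compare it against line by line: Proposition \ref{prod.11} is stated there without proof, as an immediate "application" of Theorem \ref{density}. Your route --- pulling back the fields, pairing $(\tilde\delta_1^j,\tilde\delta_2^k)$, noting that $\Span(\Ker\tilde\delta_1^j\cdot\Ker\tilde\delta_2^k)=\C[X_1\times X_2]$, and invoking Remark \ref{commutative} for compatibility --- is exactly what the later applications (e.g.\ the proof of Theorem \ref{thm.50}) presuppose. The real added value of your write-up is that you identified and closed the one step that is genuinely not automatic: all the vectors $\tilde\delta_2^k(x_0)$ lie in $0\oplus T_{x_2}X_2$ and the product isotropy group preserves the splitting, so Definition \ref{tangential} is not satisfied by product automorphisms alone. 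Your shears $\exp\bigl(t(1\otimes g)\tilde\delta_1^j\bigr)$ with $g(x_2)=0$ are legitimate elements of the isotropy group (the field is locally nilpotent because $1\otimes g\in\Ker\tilde\delta_1^j$, and it vanishes at $x_0$), the linearization you state is right since the quadratic term $g(x_0)\,D\tilde\delta_1^j(x_0)$ drops out, and the passage from "the span contains $(c\,\delta_1^j(x_1),0)$" to "the span contains $T_{x_1}X_1\oplus 0$" is justified because the orbit $\{d\phi(m)\}$ is stable under further isotropy elements, so its span is $\Aut(X_1)_{x_1}$-invariant. Two caveats, both inherited from the statement rather than introduced by you: for "compatible" to even be defined (Definition \ref{compatible}) the $\delta_2^k$ must be locally nilpotent or semi-simple, not merely integrable, which is the case in every application the paper actually makes; and your argument uses some $\delta_2^k(x_2)\neq 0$, i.e.\ $\dim X_2>0$, which is likewise implicit in the intended hypotheses.
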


We shall need also two technical results (Lemmas 3.6 and 3.7 in \cite{KK2})
that describe conditions under which quasi-finite morphisms preserve
semi-compatibility.

\begin{lemma}
\label{finereformulation} Let $G=SL_2$ and $X,X^{\prime}$ be normal affine
algebraic varieties equipped with non-degenerate $G$-actions. Suppose that
subgroups $H_1$ and $H_2$ of $G$ are as in Example \ref{example.01}, i.e.
they act naturally on $X$ and $X^{\prime}$. Let $\rho_i : X\to X_i:=X//H_i$
and $\rho_i^{\prime}: X^{\prime}\to X_i^{\prime}:=X^{\prime}//H_i$ be the
quotient morphisms and let $p : X \to X^{\prime}$ be a finite $G$%
-equivariant morphism, i.e. we have commutative diagrams
\begin{equation*}
\begin{array}{ccc}
X & \overset{\rho_i}{\rightarrow} & X_i \\
\, \, \, \downarrow p &  & \, \, \, \, \downarrow q_i \\
X^{\prime} & \overset{\rho_i^{\prime}}{\rightarrow} & X_i^{\prime}
\end{array}
\end{equation*}
\noindent for $i=1,2$. Treat $\ensuremath{\mathbb{C}} [X_i]$ (resp. $%
\ensuremath{\mathbb{C}} [X_i']$) as a subalgebra of $%
\ensuremath{\mathbb{C}}
[X]$ (resp. $\ensuremath{\mathbb{C}} [X']$). Let $\mathrm{Span} (%
\ensuremath{\mathbb{C}}
[X_1] \cdot \ensuremath{\mathbb{C}} [X_2])$ contain a nonzero ideal of $%
\ensuremath{\mathbb{C}}[X]$. Then $\mathrm{Span} (%
\ensuremath{\mathbb{C}}
[X_1'] \cdot \ensuremath{\mathbb{C}} [X_2'])$ contains a nonzero ideal of $%
\ensuremath{\mathbb{C}} [X']$.
\end{lemma}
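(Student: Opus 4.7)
The plan is to convert the hypothesis and conclusion into the geometric criterion of Proposition \ref{reformulation}. Write $Z := \overline{\rho(X)} \subset X_1 \times X_2$ and $Z' := \overline{\rho'(X')} \subset X_1' \times X_2'$, where $\rho = (\rho_1,\rho_2)$ and $\rho' = (\rho_1',\rho_2')$. By Proposition \ref{reformulation}, the hypothesis says $\rho : X \to Z$ is a finite birational morphism, and the conclusion is equivalent to $\rho' : X' \to Z'$ being finite birational. So the task is to transport the ``finite birational'' property from $\rho$ to $\rho'$ through the $G$-equivariant finite morphism $p$.

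First I would show that each induced map $q_i : X_i \to X_i'$ is finite. Because $p$ is $G$-equivariant and finite, $\mathbb{C}[X]$ is an $H_i$-stable integral extension of $\mathbb{C}[X']$. For any $f \in \mathbb{C}[X]^{H_i}$, the monic minimal polynomial of $f$ over $\mathbb{C}[X']$ (which exists in $\mathbb{C}[X'][T]$ since $X'$ is normal) is preserved by every $h\in H_i$ by uniqueness, because $h(f)=f$ makes the $h$-conjugate equation another monic polynomial of the same minimal degree satisfied by $f$. Hence its coefficients lie in $\mathbb{C}[X']^{H_i}$, so $\mathbb{C}[X]^{H_i}$ is integral, and then finite, over $\mathbb{C}[X']^{H_i}$. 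Thus $q := q_1 \times q_2$ is finite on $X_1 \times X_2$. From the commutative diagram $q(\rho(X)) = \rho'(p(X)) \subset Z'$; passing to closures and using continuity and closedness of $Z'$, one gets $q(Z) \subset Z'$, so $q|_Z : Z \to Z'$ is a finite morphism as the restriction of a finite morphism to a closed subvariety.

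Finiteness of $\rho'$ then follows by a Noetherian module argument: by hypothesis $\mathbb{C}[X]$ is finite over $\mathbb{C}[Z]$, which by the previous step is finite over $\mathbb{C}[Z']$, so $\mathbb{C}[X]$ is a finitely generated $\mathbb{C}[Z']$-module; since $\mathbb{C}[X'] \hookrightarrow \mathbb{C}[X]$ is a $\mathbb{C}[Z']$-submodule and $\mathbb{C}[Z']$ is Noetherian, $\mathbb{C}[X']$ is a finite $\mathbb{C}[Z']$-module, i.e. $\rho'$ is finite. For birationality of $\rho' : X' \to Z'$, I would compute the generic fiber: for a generic $x' \in X'$ one has $(\rho')^{-1}(\rho'(x')) = H_1 x' \cap H_2 x'$, so whenever $h_1 x' = h_2 x'$ we have $h_2^{-1}h_1 \in \mathrm{Stab}_G(x')$, and non-degeneracy of the $SL_2$-action on $X'$ (giving trivial generic isotropy) together with $H_1 \cap H_2 = \{e\}$ forces $h_1 = h_2 = e$. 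So the generic fiber is $\{x'\}$, making $\rho'$ birational onto $Z'$; applying Proposition \ref{reformulation} to $X'$ gives the required nonzero ideal in $\mathrm{Span}(\mathbb{C}[X_1']\cdot\mathbb{C}[X_2'])$.

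The most delicate point is the finiteness of $q_i$ in the first step, because the quotients $X_i = X//H_i$ are by the non-reductive group $H_i \cong \mathbb{C}_+$, so one cannot appeal to a ``reductive invariants of a finite module are finite'' principle; the minimal-polynomial uniqueness trick together with normality of $X'$ is what replaces it. A secondary subtlety is checking that ``non-degenerate $SL_2$-action'' is strong enough to guarantee trivial generic stabilizers, which is what the birationality step in the last paragraph needs.
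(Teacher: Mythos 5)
The paper does not actually prove this lemma: it is imported from \cite{KK2} (Lemma 3.6 there), so there is no internal proof to compare your argument against. Your overall strategy --- translating both hypothesis and conclusion into the ``finite birational'' criterion of Proposition \ref{reformulation} and transporting that property through $p$ --- is the natural one, and the finiteness half is essentially sound: the minimal-polynomial argument over the normal ring $\mathbb{C}[X']$ correctly shows that $\mathbb{C}[X_i]$ is integral over $\mathbb{C}[X_i']$ (to upgrade ``integral'' to ``module-finite'' you should also note why $\mathbb{C}[X_i]$ is a finitely generated algebra; it is, because $H_i$ is a maximal unipotent, hence Grosshans, subgroup of $SL_2$), and the Noetherian-submodule argument then gives finiteness of $\rho'$. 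You do use, without saying so, that $p$ is surjective (to get $q(Z)=Z'$ and the inclusion $\mathbb{C}[X']\hookrightarrow\mathbb{C}[X]$); this is harmless, since it holds in every application in the paper, but it should be stated.

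The genuine gap is the birationality step, and it sits exactly where you flagged your ``secondary subtlety.'' Non-degeneracy of the $SL_2$-action does \emph{not} give trivial generic isotropy; it only gives finite generic isotropy (for instance $X'=SL_2/F$ for a finite subgroup $F$ is a non-degenerate fixed-point-free $SL_2$-variety on which every stabilizer is conjugate to $F$). With $\Gamma_{x'}:=\mathrm{Stab}(x')$ finite but nontrivial, the generic fiber $H_1x'\cap H_2x'$ equals $\{\,h_1x' : h_1\in H_1\cap H_2\Gamma_{x'}\,\}$, and this contains points other than $x'$ whenever $\Gamma_{x'}$ meets $H_2H_1\setminus\{e\}$. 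That is a genuinely nonempty condition: $H_2H_1$ is the hypersurface of matrices in $SL_2$ whose $(2,2)$-entry equals $1$, and it contains elements of finite order, e.g.\ the order-three matrix with rows $(-2,3)$ and $(-1,1)$. So your deduction ``$h_2^{-1}h_1\in\mathrm{Stab}(x')$ forces $h_1=h_2=e$'' fails for such points $x'$. The step can be repaired: stabilizers along an orbit are conjugate, and for each fixed finite-order $\gamma\neq\pm e$ the locus of $g\in SL_2$ with $(g\gamma g^{-1})_{22}=1$ is a proper hypersurface, while $-e\notin H_2H_1$; hence at a \emph{generic} point of a generic orbit one does have $\Gamma_{x'}\cap H_2H_1=\{e\}$, the fiber is a single point, and $\rho'$ is birational. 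Without this (or an equivalent) argument the proof is incomplete, since generic injectivity is precisely the content of the birationality you need.
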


The second result is presented here in a slightly different form but with a
much simpler proof.

\begin{lemma}
\label{quasi} Let the assumption of Lemma \ref{finereformulation} hold with
two exceptions: we do not assume that $G$-actions are non-degenerate and
instead of the finiteness of $p$ we suppose that there are a surjective
\'etale morphism $r : M \to M^{\prime}$ of normal affine algebraic
varieties equipped with trivial $G$-actions and a surjective $G$-equivariant
morphism $\tau^{\prime}: X^{\prime}\to M^{\prime}$ such that $X$ is
isomorphic to fibred product $X^{\prime}\times_{M^{\prime}}M$ with $p : X\to
X^{\prime}$ being the natural projection (i.e. $p$ is surjective
\'etale). Then the conclusion of Lemma \ref{finereformulation} remains
valid.
\end{lemma}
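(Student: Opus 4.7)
My plan is to exploit the observation that the subgroups $H_1, H_2 \subset G$ act trivially on $M$ and $M'$, so the fibered-product structure $X \cong X' \times_{M'} M$ passes through the quotients $(-)//H_i$; algebraically everything becomes a base change along the faithfully flat ring extension $A := \mathbb{C}[M'] \hookrightarrow B := \mathbb{C}[M]$. Throughout, I set $S := \mathbb{C}[X]$, $S' := \mathbb{C}[X']$, so $S = S' \otimes_A B$.

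First I would verify the base-change picture for the quotients. Each $H_i \cong \mathbb{C}_+$ is generated by a locally nilpotent derivation $\delta_i$ of $S'$ which is $A$-linear since $H_i$ fixes $M'$; tensoring the exact sequence $0 \to \ker\delta_i \to S' \xrightarrow{\delta_i} S'$ with the flat $A$-module $B$ gives $\mathbb{C}[X_i] = \ker(\delta_i \otimes 1) = \ker\delta_i \otimes_A B = \mathbb{C}[X_i'] \otimes_A B$. A direct tensor-product computation then shows $R := \mathrm{Span}(\mathbb{C}[X_1] \cdot \mathbb{C}[X_2]) = R' \otimes_A B$, where $R' := \mathrm{Span}(\mathbb{C}[X_1'] \cdot \mathbb{C}[X_2']) \subset S'$; note $R'$ is a finitely generated $\mathbb{C}$-algebra, hence Noetherian.

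Next I would reformulate the hypothesis in terms of the conductor $\mathfrak{c} := \{s \in S : sS \subset R\}$, which is the largest ideal of $S$ contained in the subring $R$, so the hypothesis amounts to $\mathfrak{c} \neq 0$. Picking $0 \neq c \in \mathfrak{c}$ and using that $S$ is a domain (normality of $X$), one has $S \subset c^{-1}R$ inside $\mathrm{Frac}(R)$, and Noetherianity of $R$ forces $S$ to be a finitely generated $R$-module. Standard faithfully flat descent of finite generation along $A \hookrightarrow B$ (lift generators of $S$ over $R$ to tensor representatives) then shows $S'$ is finitely generated over $R'$; in particular $S'/R'$ is a finitely presented $R'$-module.

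Finally, since $R = R' \otimes_A B$ is flat over $R'$ and $S'/R'$ is finitely presented, the annihilator commutes with the base change:
\begin{equation*}
\mathfrak{c} = \mathrm{Ann}_R(S/R) = \mathrm{Ann}_R\bigl((S'/R') \otimes_{R'} R\bigr) = \mathrm{Ann}_{R'}(S'/R') \otimes_{R'} R = \mathfrak{c}' \otimes_A B,
\end{equation*}
where $\mathfrak{c}' := \{s' \in S' : s'S' \subset R'\}$; faithful flatness of $B/A$ then forces $\mathfrak{c}' \neq 0$, producing the desired nonzero ideal of $S'$ inside $R'$. The hard part will be securing finiteness of $R \subset S$ directly from the hypothesis so that this annihilator-base-change formula applies, and the Noetherian argument above handles exactly that, which is why this proof is ``much simpler'' than that of Lemma~\ref{finereformulation}.
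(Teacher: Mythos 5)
Your argument is correct, but it follows a genuinely different route from the paper's. The paper stays geometric: it invokes Proposition \ref{reformulation} to translate semi-compatibility into the statement that $\rho: X \to Z_{\rm norm}$ is an isomorphism, observes that $Z=Z'\times_{D'}D$ and hence $Z_{\rm norm}=Z'_{\rm norm}\times_{D'}D$, and then uses surjectivity of $r$ to descend the isomorphism to $\rho':X'\to Z'_{\rm norm}$, concluding again by Proposition \ref{reformulation}. You instead work entirely on coordinate rings and descend the conductor $\mathfrak{c}=\{s\in S: sS\subset R\}$ (the largest ideal of $S$ inside $R$) along the faithfully flat extension $A=\C[M']\hookrightarrow B=\C[M]$, using flatness to identify $\Ker(\delta_i\otimes 1)=\Ker\delta_i\otimes_A B$, hence $R=R'\otimes_A B$ and $S/R=(S'/R')\otimes_{R'}R$, then descending finite generation of $S$ over $R$ and applying the compatibility of annihilators of finitely generated modules with flat base change. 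What each approach buys: the paper's proof is very short once Proposition \ref{reformulation} is available, but leans on normalization and the finite-birational characterization; yours bypasses Proposition \ref{reformulation} entirely, makes transparent that only flatness and surjectivity of $r$ are used (\'etaleness beyond flatness plays no role), and produces the conductor formula $\mathfrak{c}=\mathfrak{c}'R$ as a bonus. Two points you should make explicit: (i) Noetherianity of $R'$ rests on finite generation of the invariant rings $\C[X']^{H_i}$, which is not automatic for $\C_+$-actions but does hold here because $H_i$ is a maximal unipotent subgroup of the reductive group $SL_2$ (Hadziev--Grosshans), an assumption the paper also uses implicitly in forming the quotient varieties $X_i$; and (ii) your step $S\subset c^{-1}R$ uses that $c$ is a nonzerodivisor, i.e.\ that $\C[X]$ is a domain, which is fine for the irreducible normal varieties appearing in the applications but should be flagged.
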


\begin{proof}

By construction, $X_i =X_i' \times_{M'} M$. Thus we have the
following commutative diagram
\[ \begin{array}{ccccccc}
X & \stackrel{{ \rho}}{\rightarrow} & ({X}_1'\times
{X}_2')\times_{(M' \times M')} (M \times M) & \stackrel{{ 
}}{\rightarrow}
 & M \times M\\
\, \, \, \, \downarrow {p} &&
\, \,  \, \, \downarrow {q} &&  \, \,  \, \, \,  \, \, \, \, \, \downarrow {(r, r)}\\
X' & \stackrel{\rho'}{\rightarrow} &
X_1'\times X_2'  \,  \, & \stackrel{{(\tau' , \tau' )}}{\rightarrow} & \, \, M'\times M'.\\
\end{array} \]
Set $Z$ (resp. $Z'$) equal to the closure of $\rho (X)$ in
$X_1\times X_2$ (resp.  $\rho' (X')$ in $X_1'\times X_2'$) and $D
\simeq M $ (resp. $D' \simeq M'$) be the diagonal subset in
$M\times M$ (resp. $M' \times M')$. 
Since $X= X' \times_{M'} M$ we see that $Z=Z' \times_{D'} D$. For
any affine algebraic variety $Y$ denote by $Y_{\rm norm}$ its
normalization, i.e. $Z_{\rm norm} =Z_{\rm norm}' \times_{D'} D$.
By Lemma \ref{reformulation} $\rho : X \to Z_{\rm norm}$ is an
isomorphism. Since $r$ is surjective it can happen only when
$\rho' : X' \to Z'$ is an isomorphism. Hence the desired
conclusion follows from Lemma \ref{reformulation}.

\end{proof}

The last result from \cite{KK2} that we need allows us to switch from local
to global compatibility.

\begin{proposition}
\label{glue} Let $X$ be an $SL_2$-variety with associated locally nilpotent
derivations $\delta_1$ and $\delta_2$, $Y$ be a normal affine algebraic
variety equipped with a trivial $SL_2$-action, and $r : X \to Y$ be a
surjective $SL_2$-equivariant morphism. Suppose that for any $y \in Y$ there
exists an \'etale neighborhood $g: W \to Y$ such that the vector fields
induced by $\delta_1$ and $\delta_2$ on the fibred product $X\times_Y W$ are
semi-compatible. Then $\delta_1$ and $\delta_2$ are semi-compatible.
\end{proposition}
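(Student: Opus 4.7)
The plan is to reduce the global conclusion to Lemma \ref{quasi} by a quasi-compactness argument together with a disjoint union construction. By Proposition \ref{reformulation}, semi-compatibility is equivalent to a geometric finiteness statement, so either formulation may be used; however, using Lemma \ref{quasi} directly avoids having to rework the descent for finite birational morphisms.

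First, since $Y$ is affine and hence quasi-compact, I extract from the hypothesis a finite collection of \'etale morphisms $g_j : W_j \to Y$, $j = 1,\ldots,n$, whose images cover $Y$ and for which the derivations induced by $\delta_1$ and $\delta_2$ on each $X \times_Y W_j$ are semi-compatible. I may assume each $W_j$ is affine and normal after restricting to a suitable affine open subset and passing to the normalization, using that \'etale morphisms preserve normality.

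Next, I form $g : W := \bigsqcup_{j=1}^n W_j \to Y$, a surjective \'etale morphism of normal affine varieties carrying the trivial $SL_2$-action, and I check that the derivations $\delta_1^W, \delta_2^W$ induced by $\delta_1, \delta_2$ on $X^W := X \times_Y W = \bigsqcup_j (X \times_Y W_j)$ are semi-compatible. Since $\mathbb{C}[X^W] = \prod_j \mathbb{C}[X \times_Y W_j]$ and both the $H_i$-invariants and the span of products of invariants decompose along this direct product, one produces a nonzero ideal in $\Span(\Ker \delta_1^W \cdot \Ker \delta_2^W)$ by taking the factor-wise product of the nonzero ideals guaranteed by semi-compatibility on each $X \times_Y W_j$.

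Finally, I invoke Lemma \ref{quasi} with its $(M, M', r, \tau')$ identified with $(W, Y, g, r)$ of the present setup, and with the roles of its $X, X'$ played by $X^W, X$. The hypotheses are met: $r : X \to Y$ is surjective and $SL_2$-equivariant, $g : W \to Y$ is surjective \'etale of normal affine varieties with trivial $SL_2$-action, and by construction $X^W = X \times_Y W$. The conclusion of Lemma \ref{quasi} is exactly that $\Span(\Ker \delta_1 \cdot \Ker \delta_2)$ contains a nonzero ideal of $\mathbb{C}[X]$, i.e. that $\delta_1$ and $\delta_2$ are semi-compatible on $X$. The main (and only) technical point is the disjoint union step, where one must check that semi-compatibility decomposes cleanly along the components of $X^W$; this is routine because taking $H_i$-invariants and forming spans of products both commute with finite direct products of rings.
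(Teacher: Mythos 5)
The paper does not actually prove this proposition --- it is imported verbatim from \cite{KK2} as a quoted tool --- so there is no in-paper argument to match yours against; what you have written is an independent derivation from the other quoted results, and its core idea is sound. The reduction is: \'etale morphisms have open images, so quasi-compactness of $Y$ gives finitely many $g_j:W_j\to Y$ whose images cover $Y$; the disjoint union $W=\bigsqcup_j W_j$ is then a \emph{surjective} \'etale map to $Y$ (surjectivity is exactly what a single $W_j$ lacks and what Lemma \ref{quasi} needs); semi-compatibility passes to $X\times_Y W$ because $\Ker\delta_i$ and $\Span(\Ker\delta_1\cdot\Ker\delta_2)$ decompose as direct products of rings over the components, so the product of the componentwise ideals is a nonzero ideal in the global span; and Lemma \ref{quasi} with $(M,M',\tau')=(W,Y,r)$ then descends semi-compatibility from $X\times_Y W$ to $X$. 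All the hypotheses you list do match, and the conclusion of Lemma \ref{quasi} is indeed the semi-compatibility of $\delta_1,\delta_2$ on $X$.

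Two caveats are worth recording. First, the load-bearing step is the application of Lemma \ref{quasi} with a \emph{disconnected} $M$ and hence disconnected upstairs variety $X\times_Y W$; the lemma is stated for normal affine varieties and its proof runs through Proposition \ref{reformulation} (``finite birational'') and normalizations, so you should note explicitly that these extend componentwise (the closure of $\rho(X\times_Y W)$ splits as a disjoint union indexed by $j$, so no two components collide and ``birational'' keeps its meaning). This is routine but it is the one place where you use a quoted lemma outside its literal scope. Second, your preliminary normalization of the $W_j$ is unnecessary (an \'etale cover of the normal $Y$ is already normal), and shrinking $W_j$ to an affine open is not as innocent as you suggest: semi-compatibility on $X\times_Y W_j$ does not formally restrict to an arbitrary saturated open subset, since the span of products of kernels is not a $\C[X]$-module. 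It is cleaner to take the \'etale neighborhoods affine from the outset, which is how they arise in the paper's only application (Luna slices in Lemma \ref{local}). Neither point invalidates the argument.
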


\section{Algebraic density property and $SL_2$-actions}

\begin{notation}
\label{nota.1} {\rm We suppose that $H_1,H_2, \delta_1$ and
$\delta_2$ are as in Example \ref{example.01}. Note that if $SL_2$
acts algebraically
on an affine algebraic variety $X$ then we have automatically the $%
\ensuremath{\mathbb{C}}_+$-actions of $H_1$ and $H_2$ on $X$ that generate
locally nilpotent vector fields on $X$, which by abuse of notation will be
denoted by the same symbols $\delta_1$ and $\delta_2$. If $X$ admits several
(say, $N$) $SL_2$-actions, we denote by $\{ \delta_1^k,\delta^k_2 \}_{k=1}^N$
the corresponding collection of pairs of locally nilpotent derivations on $%
\ensuremath{\mathbb{C}} [X]$.}
\end{notation}

Here is the first main result of this paper.

\begin{theorem}
\label{MAIN} Let $X$ be a smooth complex affine algebraic variety whose
group of algebraic automorphisms is transitive. Suppose that $X$ is equipped
with $N$ fixed point free non-degenerate actions of $SL_2$-groups $\Gamma_1,
\ldots , \Gamma_N$. Let $\{ \delta_1^k,\delta^k_2 \}_{k=1}^N$ be the
corresponding pairs of locally nilpotent vector fields. If $\{\delta^k_2
(x_0) \}_{k=1}^N \subset T_{x_0}X$ is a generating set at some point $x_0\in
X$ then $X$ has the algebraic density property.
\end{theorem}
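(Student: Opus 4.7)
The plan is to verify the hypotheses of Theorem~\ref{density}: since the generating-set condition on $\{\delta^k_2(x_0)\}$ is part of the assumption, it suffices to show that each pair $(\delta^k_1,\delta^k_2)$ is compatible in the sense of Definition~\ref{compatible}(2). Fix one such $k$ and drop it from the notation; I must produce semi-compatibility of $\delta_1,\delta_2$ together with an element $a\in\C[X]$ satisfying $\deg_{\delta_1}(a)=1$ and $\deg_{\delta_2}(a)\le 1$.

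For semi-compatibility I would apply Proposition~\ref{glue} to the quotient $r\colon X\to Y:=X/\!/\Gamma$. Given $y\in Y$, choose $x$ in the unique closed $\Gamma$-orbit above $y$. Luna's slice theorem (Theorem~\ref{Luna}) then supplies a $\Gamma_x$-stable affine slice $V\ni x$, an étale morphism $V/\!/\Gamma_x\to Y$ through $y$, and a $\Gamma$-equivariant isomorphism $X\times_Y(V/\!/\Gamma_x)\cong\Gamma\times_{\Gamma_x}V$ coming from the strongly étale structure of $\Gamma\times_{\Gamma_x}V\to U$. It therefore suffices to verify semi-compatibility on each local model $\Gamma\times_{\Gamma_x}V$. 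On the trivial cover $\Gamma\times V$ the conclusion is immediate: by Example~\ref{example.01} the map $SL_2\hookrightarrow\C^2\times\C^2$ sending a matrix to its pair of rows is a closed embedding realising $SL_2$ inside $\Gamma/\!/H_1\times\Gamma/\!/H_2$, and hence $\mathrm{Span}(\mathrm{Ker}\,\delta_1\cdot\mathrm{Ker}\,\delta_2)=\C[\Gamma\times V]$. I would then descend along the principal $\Gamma_x$-bundle $\Gamma\times V\to\Gamma\times_{\Gamma_x}V$: for finite $\Gamma_x$ this descent is finite and Lemma~\ref{finereformulation} transfers semi-compatibility to the quotient; for positive-dimensional $\Gamma_x$ (a torus or its normalizer in $SL_2$) one first refines the bundle to an étale-local trivialization and then applies Lemma~\ref{quasi}.

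For the element $a$, decompose $\C[X]$ into irreducible $\Gamma$-modules $V_n$. With $\delta_1$ raising and $\delta_2$ lowering the $H$-weight by $2$, the degree conditions force each $V_n$-isotypic component of $a$ to live in weights $\{n,n-2\}\cap\{-n,-n+2\}$, which leaves only $V_1$ (where the lowest weight vector, modelled on $b_2\in\C[SL_2]$, gives $\deg_{\delta_2}(a)=0$) and $V_2$ (where the zero weight vector, modelled on $a_1b_2$ from Example~\ref{example.01}, gives $\deg_{\delta_2}(a)=1$) as possible sources of a nontrivial contribution. When a copy of $V_1$ or $V_2$ is directly present in $\C[X]$ we are done; otherwise one constructs $a$ by restricting a suitable $\Gamma_x$-invariant element of $\mathrm{Span}(\C[\Gamma]\cdot\C[V])$ from the Luna model down to $\Gamma\times_{\Gamma_x}V$ and extending it globally using the transitivity of $\Aut X$.

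The principal obstacle is in completing step (ii) uniformly across all isotropy types: while $V_1$ or $V_2$ typically appears in $\C[X]$, there exist fixed-point-free $SL_2$-varieties (such as $SL_2/F$ for $F$ a binary polyhedral finite subgroup) whose coordinate rings carry neither $V_1$ nor $V_2$. The generating-set hypothesis in Theorem~\ref{MAIN} excludes most such pathologies, and in the residual cases one must combine the Luna slice from step (i) with the transitivity of $\Aut X$ to glue a local witness into a global regular function. Once compatibility of every pair $(\delta^k_1,\delta^k_2)$ has been secured, Theorem~\ref{density} delivers the algebraic density property.
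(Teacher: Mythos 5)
Your overall architecture matches the paper's: reduce Theorem \ref{MAIN} via Theorem \ref{density} to compatibility of each pair (the paper's Theorem \ref{goal}), split compatibility into semi-compatibility plus a degree-one witness $a$, and localize via Proposition \ref{glue} and Luna's slice theorem to the model $G\times_{G_x}V$, with the finite-isotropy case handled by Lemma \ref{finereformulation}. However, there are two genuine gaps at exactly the places where the paper has to work hardest.

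First, the positive-dimensional isotropy case. The map $SL_2\times V\to SL_2\times_{\C^*}V$ is a principal $\C^*$-bundle: its fibers are one-dimensional, so it is neither finite nor \'etale, and neither Lemma \ref{finereformulation} nor Lemma \ref{quasi} applies to it. An ``\'etale-local trivialization'' does not rescue this, because Lemma \ref{quasi} requires $X\cong X'\times_{M'}M$ over an \'etale map of bases carrying trivial $SL_2$-actions --- it lets one pass between $U$ and $G\times_{G_x}V$, not between a variety and its $\C^*$-quotient. Semi-compatibility does not descend formally along such quotients: on $Y'=SL_2\times_{\C^*}V$ the kernels of $\delta_1,\delta_2$ are only the $\C^*$-invariant parts of the kernels upstairs, and proving that their product still contains a nonzero ideal --- equivalently, by Proposition \ref{reformulation}, that $\rho':Y'\to Z'$ is finite and birational --- is the technical core of the paper (Notation \ref{nota.3} through Proposition \ref{finite}), requiring the analysis of the two-valued function $\lambda_\pm$ on the torus-orbit closure $\bar T$, codimension estimates, and normalization arguments. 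Your proposal replaces all of this with a single sentence that does not go through.

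Second, the witness $a$. Your isotypic analysis is correct as far as it goes: $\deg_{\delta_1}(a)=1$ together with $\deg_{\delta_2}(a)\le 1$ forces the nonconstant part of $a$ into the $V_1\oplus V_2$-isotypic part of $\C[X]$, and you rightly note that such components can be absent. But your fallback, ``extend a local witness globally using the transitivity of $\Aut X$,'' is not a construction --- transitivity of the automorphism group provides no mechanism for extending a regular function from a saturated open set $U$ to all of $X$. The paper's Proposition \ref{a} does this concretely: push $f=a_1b_2$ forward along the finite \'etale map $\varphi$ by summing over fibers, twist by a slice function $\beta\in\C[V]$ chosen via the Nullstellensatz so that the fiber sum does not cancel, and multiply by a lift $\alpha$ of a function on $X//G$ vanishing to high order on $(X//G)\setminus(U//G)$ to obtain global regularity, all three auxiliary factors lying in $\Ker\delta_i$ so that the degrees are preserved. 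Without an argument of this type the compatibility of the pairs, and hence the appeal to Theorem \ref{density}, is not established.
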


\begin{remark}
\label{rem.01} Note that we can choose any nilpotent element of the Lie
algebra of $SL_2$ as $\delta_2$. Since the space of nilpotent elements
generate the whole Lie algebra we can reformulate Theorem \ref{MAIN} as
follows: a smooth complex affine algebraic variety $X$ with a transitive
group of algebraic automorphisms has the algebraic density property provided
it admits ``sufficiently many" fixed point free non-degenerate $SL_2$%
-actions, where ``sufficiently many" means that at some point $x_0 \in X$ the
tangent spaces of the corresponding $SL_2$-orbits through $x_0$ generate the
whole space $T_{x_0}X$.
\end{remark}

By virtue of Theorem \ref{density} the main result will be a consequence of
the following.

\begin{theorem}
\label{goal} Let $X$ be a smooth complex affine algebraic variety equipped
with a fixed point free non-degenerate $SL_2$-action that induces a pair of
locally nilpotent vector fields $\{ \delta_1,\delta_2 \}$. Then these vector
fields are compatible.
\end{theorem}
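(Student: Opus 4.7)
The strategy is to use Luna's slice theorem to reduce semi-compatibility of $(\delta_1,\delta_2)$ to a trivial calculation on the model $SL_2\times V$, to apply Proposition \ref{glue} to pass from local to global semi-compatibility, and then to verify condition (2) of Definition \ref{compatible} via the $SL_2$-isotypic decomposition of $\C[X]$.

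Since the $SL_2$-action is fixed point free and non-degenerate, every orbit has dimension $3$ and is closed (isotropy groups being finite, hence reductive), so $Y:=X//SL_2=X/SL_2$ is a normal affine variety. By Luna's theorem (\ref{Luna}), for each $y\in Y$ one can choose $x\in\pi^{-1}(y)$ and an $F$-stable affine slice $V\ni x$ with $F=G_x$ finite such that $\phi:SL_2\times_F V\to U$ is strongly \'etale onto a saturated open $U\subset X$ and $V//F\to Y$ is \'etale with $X\times_Y(V//F)\cong SL_2\times_F V$. Pulling back further along the principal $F$-cover $SL_2\times V\to SL_2\times_F V$ (\'etale since the right $F$-action on $SL_2$, and hence on $SL_2\times V$, is free), the vector fields $\delta_1,\delta_2$ lift to derivations acting only on the $SL_2$-factor. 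Example \ref{example.01} gives $\ker\delta_1=\C[a_1,a_2]\otimes\C[V]$ and $\ker\delta_2=\C[b_1,b_2]\otimes\C[V]$; their product already spans the whole ring $\C[SL_2]\otimes\C[V]$, since modulo $a_1b_2-a_2b_1=1$ we have $\C[a_1,a_2,b_1,b_2]=\C[SL_2]$. Thus $\delta_1,\delta_2$ are trivially semi-compatible on $SL_2\times V$; Lemma \ref{quasi} transfers this to $SL_2\times_F V$, and Proposition \ref{glue} applied to the \'etale cover $\{V//F\to Y\}_y$ of $Y$ descends it to $X$.

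For condition (2) of Definition \ref{compatible}, decompose $\C[X]=\bigoplus_n M_n\otimes V_n$ into $SL_2$-isotypic components, where $M_n=\operatorname{Hom}_{SL_2}(V_n,\C[X])$ is a $\C[Y]$-module; \'etale-locally, using the model $SL_2\times V$, we have $M_n\cong V_n^*\otimes\C[V]$, so $M_n$ is locally free of rank $n+1$ on $Y$. In particular $M_1$ is a rank-$2$ vector bundle on the affine variety $Y$ and therefore admits nonzero global sections. Pick any nonzero $f\in M_1$ and let $v_{-1}\in V_1$ be the lowest weight vector (killed by $\delta_2$, sent by $\delta_1$ to the highest weight vector $v_1\neq 0$). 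Then $a:=f(v_{-1})\in\C[X]$ satisfies $\delta_2(a)=f(\delta_2 v_{-1})=0$, $\delta_1(a)=f(v_1)\neq 0$, and $\delta_1^2(a)=f(\delta_1^2 v_{-1})=0$, so $\deg_{\delta_1}(a)=1$ and $\deg_{\delta_2}(a)=0\leq 1$, verifying condition (2).

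The main technical obstacle is the descent of semi-compatibility from $SL_2\times V$ to $SL_2\times_F V$ when $F$ is nontrivial, since Lemma \ref{quasi} formally asks the base-change map $V\to V//F$ to be \'etale, which fails at the $F$-fixed locus; one handles this either by first restricting to the $F$-free open subset of $V$ (still providing an \'etale neighborhood of $y\in Y$ via Proposition \ref{glue}) and then extending semi-compatibility by normality of $X$, or by analyzing the $F$-invariants $(\C[SL_2]\otimes\C[V])^F$ directly. In the truly free case $F=\{e\}$ everything simplifies and no such care is needed.
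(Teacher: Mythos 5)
There is a genuine gap, and it is located at the very first sentence of your argument: from ``fixed point free and non-degenerate'' you conclude that every orbit is three-dimensional and closed with finite isotropy. That implication is false. Non-degeneracy only guarantees that the \emph{general} orbit is three-dimensional; the closed orbits at which Luna's theorem must be applied (every orbit closure contains one, by Proposition \ref{redu}(2)) can perfectly well be two-dimensional, with isotropy group $\C^*$ or the $\Z_2$-extension of $\C^*$ --- this is exactly the content of Lemma \ref{isosl2} in the paper, which lists these as unavoidable possibilities. Your proof therefore only treats the easy local model $SL_2\times_F V$ with $F$ finite (where Lemma \ref{quasi} does apply essentially as you say), and silently skips the case $G_x=\C^*$, i.e.\ the local model $SL_2\times_{\C^*}V$. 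Handling that case is the bulk of the paper's proof: one must show that $\rho'\colon Y'=SL_2\times_{\C^*}V\to Y_1'\times Y_2'$ is finite birational onto the closure of its image, which requires the construction of the quasi-invariant function $f$ on the torus orbit $\bar T$ of $Z=\rho(Y)$, the codimension estimates of Lemmas \ref{codim2} and \ref{fabrizio.3}, the normalization argument of Proposition \ref{fabrizio.26}, and finally Proposition \ref{finite}; the $\Z_2$-extension case is then obtained by a further quotient via Lemma \ref{finereformulation}. None of this is a removable technicality, and your closing paragraph (worrying only about non-free actions of a \emph{finite} $F$) shows the positive-dimensional isotropy issue was not seen.

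Your verification of condition (2) of Definition \ref{compatible} also has a flaw independent of the above. You take a nonzero section of the multiplicity space $M_1$ of the standard representation $V_1$ in $\C[X]$, asserting that $M_1$ is locally free of rank $2$ over $Y$. But $M_1$ can vanish identically: if the $SL_2$-action factors through $PSL_2$ (e.g.\ $X=PSL_2$ with left multiplication, a fixed point free non-degenerate action with isotropy $\{\pm I\}$), only the odd-dimensional representations occur in $\C[X]$ and there is no copy of $V_1$ at all; and at points with isotropy $\C^*$ the local-freeness claim fails as well. The paper's Proposition \ref{a} avoids this by working with $f=a_1b_2$ (which lives in the adjoint isotypic piece and satisfies $\deg_{\delta_1}f=\deg_{\delta_2}f=1$), transferring it to $X$ by summing over fibers of the finite \'etale map $SL_2\times V\to U$ (respectively $SL_2\times_{\C^*}V\to U$), and multiplying by suitable kernel elements $\beta$ and $\alpha$ to guarantee nonvanishing of $\delta_i(\widehat{\beta f})$ and regularity on all of $X$. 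Your construction could likely be repaired by replacing $M_1$ with a nonzero $M_n$, $n\ge 1$, and choosing the appropriate weight vector, but as written it does not cover the cases the theorem must handle.
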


The proof of the last fact requires some preparations and until we finish
this proof completely the assumption is that all $SL_2$-actions we consider
are \textbf{non-degenerate}.

\begin{lemma}
\label{isosl2} Let the assumption of Theorem \ref{goal} hold and $x \in X$
be a point contained in a closed $SL_2$-orbit. Then the isotropy group of $x$
is either finite, or isomorphic to the diagonal $\ensuremath{\mathbb{C}}^*$%
-subgroup of $SL_2$, or to the normalizer of this $\ensuremath{\mathbb{C}}^*$%
-subgroup (which is the extension of $\ensuremath{\mathbb{C}}^*$ by $\mathbb{%
Z}_2$).

\begin{proof} By Matsushima's criterion (Proposition \ref{matsu})
the isotropy group must be reductive and it cannot be $SL_2$
itself since the action has no fixed points. The only
two-dimensional reductive group is $\C^*\times\C^*$ (\cite{FuHa})
which is not contained in $SL_2$. Thus besides finite subgroups we
are left to consider the one-dimensional reductive subgroups that
include $\C^*$ (which can be considered to be the diagonal
subgroup since all tori are conjugated) and its finite extensions.
The normalizer of $\C^*$ which is its extension by $\mathbb{Z}_2$
generated by
$$A=
  \left(\begin{array}{ccc}
0 & -1\\
1 & 0 \end{array}\right) $$ is reductive.  If we try to find an
extension of $\C^*$ by another finite subgroup that contains an
element $B$ not from the normalizer then $\C^*$ and $B\C^*B^{-1}$
meet at the identical matrix. In particular, the reductive
subgroup must be at least two-dimensional, and we have to
disregard this case.
\end{proof}
\end{lemma}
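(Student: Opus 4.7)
The strategy is to reduce to a purely group-theoretic classification: identify which reductive closed subgroups of $SL_2$ can possibly occur as stabilizers, then eliminate all but the three listed possibilities.

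First I would use the hypothesis that the orbit $SL_2\cdot x$ is closed together with Matsushima's criterion (Proposition~\ref{matsu}): the homogeneous space $SL_2/G_x$ is quasi-affine because it sits as a closed orbit in the affine variety $X$; in fact it \emph{is} affine since it is the image under the quotient map of the closed reduced orbit. Thus $G_x$ must be reductive. The case $G_x = SL_2$ is ruled out immediately because it would force $x$ to be a fixed point of the $SL_2$-action, contradicting the fixed-point-free assumption of Theorem~\ref{goal}.

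Next I would eliminate the possibility $\dim G_x = 2$. Any two-dimensional connected reductive complex linear group is a torus $(\mathbb{C}^*)^2$ (see \cite{FuHa}), but $SL_2$ contains no such subgroup since its maximal tori are one-dimensional. Finite extensions of $(\mathbb{C}^*)^2$ would a fortiori have rank two, so this case is excluded entirely.

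It remains to treat the case $\dim G_x \leq 1$. If $\dim G_x = 0$, then $G_x$ is finite, giving the first alternative. If $\dim G_x = 1$, then the connected component $G_x^\circ$ is a connected one-dimensional reductive subgroup of $SL_2$, hence a torus; after conjugation we may assume $G_x^\circ$ is the standard diagonal $\mathbb{C}^*$. Since $G_x^\circ$ is characteristic in $G_x$ (it is the identity component), every element of $G_x$ normalizes $G_x^\circ$, so $G_x \subseteq N_{SL_2}(\mathbb{C}^*)$. The normalizer is generated by $\mathbb{C}^*$ together with the Weyl element $A = \left(\begin{smallmatrix} 0 & -1 \\ 1 & 0 \end{smallmatrix}\right)$, and is itself reductive. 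Therefore $G_x$ is either $\mathbb{C}^*$ or the full extension by $\mathbb{Z}_2$, yielding the remaining two alternatives.

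I do not anticipate a serious obstacle: the argument is essentially a classification of reductive closed subgroups of $SL_2$, and the only mildly delicate point is asserting that any element outside the normalizer of the diagonal torus would produce a second one-dimensional torus intersecting the first only at the identity, forcing the ambient subgroup to have dimension at least two, which is already excluded.
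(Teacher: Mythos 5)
Your proposal is correct and follows essentially the same route as the paper: Matsushima's criterion to get reductivity, exclusion of $SL_2$ by fixed-point-freeness, exclusion of the two-dimensional case since the only two-dimensional reductive group is $(\mathbb{C}^*)^2$, and reduction of the one-dimensional case to the diagonal torus and its normalizer. Your final step is in fact slightly cleaner than the paper's: you note that the identity component $G_x^\circ$ is characteristic, so $G_x$ automatically lies in $N_{SL_2}(\mathbb{C}^*)$, whereas the paper argues more explicitly that an element $B$ outside the normalizer would produce a second torus $B\mathbb{C}^*B^{-1}$ meeting $\mathbb{C}^*$ only at the identity and thus force $\dim G_x\geq 2$ --- but these are two phrasings of the same observation.
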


\begin{proposition}
\label{a} Let $X, \delta_1, \delta_2$ be as in Theorem \ref{goal}. Then
there exists a regular function $g \in \ensuremath{\mathbb{C}} [X]$ such
that $\deg_{\delta_1} (g) =\deg_{\delta_2} (g)=1$.

\begin{proof}

Let $x\in X$ be a point of a closed $SL_2$-orbit. Luna's slice
Theorem yields diagram (\ref{moon}) with $G=SL_2$ and $G_x$ being
one of the subgroups described in Lemma \ref{isosl2}. That is, we
have the natural morphism $\phi: SL_2\times V\rightarrow U$ that
factors through the \'etale morphism $SL_2\times_{G_x}
V\rightarrow U$ where $V$ is the slice at $x$. First, consider the
case when $G_x$ is finite. Then $\varphi$ itself is \'etale.
Furthermore, replacing $V$ by its Zariski open subset and $U$ by
the corresponding Zariski open $SL_2$-invariant subset one can
suppose that $\varphi$ is also finite. Set $f=a_1b_2$ where
$a_i,b_i$ are as in Example \ref{example.01}. Note that each
$\delta_i$ generates a natural locally nilpotent vector field
$\tilde \delta_i$ on $SL_2 \times V$ such that $\C [V] \subset
\Ker \tilde \delta_i$ and $\varphi_* (\tilde \delta_i)$ coincides
with the vector field induced by $\delta_i$ on $X$. Treating $f$
as an element of $\C [SL_2 \times V]$ we have $\deg_{\tilde
\delta_i} (f) = 1, \, i=1,2$. For every $h \in \C [SL_2 \times V
]$ we define a function $\hat h \in \C [U]$ by $\hat h (u)=
\sum_{y \in \varphi^{-1} (u)} h (y)$. One can check that if $h \in
\Ker \tilde \delta_i$ then $\delta_i (\hat h)=0$. Hence
$\delta_i^2 (\hat f) =0$ but we also need $\delta_i (\hat f)\ne 0$
which is not necessarily true. Thus multiply $f$ by $\beta \in \C
[V]$. Since $\beta \in \Ker \tilde \delta_i$ we have $\delta_i
(\widehat {\beta f})(u)=\sum_{y \in \varphi^{-1} (u)} \beta
(\pi_V(y)) \tilde \delta_i (f) (y)$. Note that $\tilde \delta_i
(f) (y_0)$ is not zero at a general $y_0 \in SL_2 \times V$ since
$\tilde \delta_i (f) \ne 0$. By a standard application of the
Nullstellensatz we can choose $\beta$ with prescribed values at
the finite set $\varphi^{-1} (u_0)$ where $u_0=\varphi (y_0)$.
Hence we can assure that $\delta_i ({\widehat {\beta f}}) (u_0)\ne
0$, i.e. $\deg_{ \delta_i} ({\widehat {\beta f}})=1$. There is
still one problem: ${\widehat {\beta f}}$ is regular on $U$ but
necessarily not on $X$. In order to fix it we set $g= \alpha
{\widehat {\beta f}}$ where $\alpha$ is a lift of a nonzero
function on $X//G$ that vanishes with high multiplicity on $(X//G)
\setminus (U//G)$. Since $\alpha \in \Ker \delta_i$ we still have
$\deg_{\delta_i} (g)=1$ which concludes the proof in the case of a
finite isotropy group.

For a one-dimensional isotropy group note that $f$ is
$\C^*$-invariant with respect to the action of the diagonal
subgroup of $SL_2$. That is, $f$ can be viewed as a function on
$SL_2 \times_{\C^*} V$. Then we can replace morphism $\varphi$
with morphism $\psi: SL_2\times_{\C^*} V\rightarrow U$ that
factors through the \'etale morphism $SL_2\times_{G_x}
V\rightarrow U$. Now $\psi$ is also \'etale and the rest of the
argument remains the same.

\end{proof}
\end{proposition}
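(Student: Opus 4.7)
The plan is to construct $g$ first on an $SL_2$-invariant saturated open neighborhood of a closed orbit via Luna's slice theorem, and then extend it globally by multiplication with an invariant cutoff. I pick a point $x \in X$ lying on a closed $SL_2$-orbit; by Theorem \ref{Luna} there is a slice $V$ at $x$ and a strongly \'etale $SL_2$-equivariant morphism $\varphi : SL_2 \times_{G_x} V \to U$ onto a saturated open $U \subset X$, and by Lemma \ref{isosl2} the isotropy $G_x$ is either finite, the diagonal $\mathbb{C}^*$, or its $\mathbb{Z}_2$-extension.

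The natural function to pull back is $f = a_1 b_2 \in \mathbb{C}[SL_2]$ from Example \ref{example.01}: a direct check gives $\delta_i(f) \in \ker \delta_i \setminus \{0\}$, so $\deg_{\delta_i}(f) = 1$ for $i = 1,2$. In the case when $G_x$ is finite, after shrinking $V$ and $U$ I may assume $\varphi$ is finite \'etale. I regard $f$ as an element of $\mathbb{C}[SL_2 \times V]$ via projection to the first factor and push it down to $U$ by the trace operator $\hat h(u) = \sum_{y \in \varphi^{-1}(u)} h(y)$. Because the natural lifts $\tilde\delta_i$ have $\mathbb{C}[V]$ in their kernels and $\varphi$ is $SL_2$-equivariant, this trace maps $\ker \tilde\delta_i$ into $\ker \delta_i$; applied to $\tilde\delta_i(f)$ it already gives $\delta_i^2(\hat f) = 0$.

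The delicate point, and what I expect to be the main obstacle, is that the trace sum may cancel so that $\delta_i(\hat f) = 0$ identically, which would ruin the desired equality $\deg_{\delta_i}(\hat f) = 1$. To break the symmetry I replace $f$ by $\beta f$ for a slice function $\beta \in \mathbb{C}[V] \subset \ker \tilde\delta_1 \cap \ker \tilde\delta_2$, obtaining $\delta_i(\widehat{\beta f})(u) = \sum_{y \in \varphi^{-1}(u)} \beta(\pi_V(y))\,\tilde\delta_i(f)(y)$. Since $\tilde\delta_i(f)$ is not identically zero for either $i$, I pick a generic $y_0 \in SL_2 \times V$ at which both $\tilde\delta_1(f)(y_0)$ and $\tilde\delta_2(f)(y_0)$ are nonzero and, by the Nullstellensatz, choose $\beta$ with prescribed values on the finite fiber $\varphi^{-1}(u_0)$, where $u_0 = \varphi(y_0)$, so that both sums are nonzero. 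This gives $\deg_{\delta_i}(\widehat{\beta f}) = 1$ on the open set $U$.

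Finally, to pass from $U$ to $X$ I multiply by $\alpha \in \mathbb{C}[X]$ obtained by lifting a regular function on $X//SL_2$ that vanishes to sufficiently high order on $(X//SL_2) \setminus (U//SL_2)$; since $\alpha \in \ker \delta_1 \cap \ker \delta_2$ the degrees are preserved, and $g = \alpha\,\widehat{\beta f}$ is the required regular function on all of $X$. The remaining case, when $G_x$ contains the diagonal $\mathbb{C}^*$, is handled by the observation that $f = a_1 b_2$ is itself $\mathbb{C}^*$-invariant, so it descends to a function on $SL_2 \times_{\mathbb{C}^*} V$; the strongly \'etale morphism $SL_2 \times_{G_x} V \to U$ then plays the role of $\varphi$, and the preceding trace-and-cutoff argument applies with no essential change.
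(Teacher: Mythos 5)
Your proposal is correct and follows essentially the same route as the paper's own proof: Luna slice, the function $f=a_1b_2$, the trace operator over the finite \'etale cover, the Nullstellensatz choice of $\beta\in\C[V]$ to prevent cancellation, the invariant cutoff $\alpha$, and the descent of $f$ to $SL_2\times_{\C^*}V$ in the positive-dimensional isotropy case. The only slight imprecision is at the very end: when $G_x$ is the $\Z_2$-extension of $\C^*$ the function $f$ descends only to $SL_2\times_{\C^*}V$ (not to $SL_2\times_{G_x}V$, since right multiplication by the extra Weyl element sends $f$ to $1-f$), so the trace must be taken along the composite \'etale map $SL_2\times_{\C^*}V\to U$, exactly as the paper does with its morphism $\psi$.
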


In order to finish the proof of Theorem \ref{goal} we need to show
semi-compatibility of vector fields $\delta_1$ and $\delta_2$ on $X$. Let $U$
be a saturated set as in diagram (\ref{moon}) with $G=SL_2$. Since $U$ is $%
SL_2$-invariant it is $H_i$-invariant (where $H_i$ is from Notation \ref
{nota.1}) and the restriction of $\delta_i$ to $U$ is a locally nilpotent
vector field which we denote again by the same letter. Furthermore, the
closure of any $SL_2$-orbit $O$ contains a closed orbit, i.e. $O$ is
contained in an open set like $U$ and, therefore, $X$ can be covered by a
finite collections of such open sets. Thus Proposition \ref{glue} implies
the following.

\begin{lemma}
\label{local} If for every $U$ as before the locally nilpotent vector fields
$\delta_1$ and $\delta_2$ are semi-compatible on $U$ then they are
semi-compatible on $X$.
\end{lemma}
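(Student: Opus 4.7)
The plan is to deduce the lemma directly from Proposition \ref{glue}, applied with $Y := X//SL_2$ and $r := \pi_{SL_2} : X \to Y$ the quotient morphism (equipping $Y$ with the trivial $SL_2$-action). Since $SL_2$ is reductive and $X$ is smooth (hence normal), Proposition \ref{redu}(1) guarantees that $Y$ is a normal affine variety and that $r$ is a surjective $SL_2$-equivariant morphism, so the setup in Proposition \ref{glue} is satisfied.

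To verify the local semi-compatibility hypothesis of Proposition \ref{glue}, fix an arbitrary $y \in Y$. By Proposition \ref{redu}(2), the fiber $r^{-1}(y)$ contains a closed $SL_2$-orbit; picking a point $x$ in this orbit and applying Luna's slice theorem (Theorem \ref{Luna}), I obtain a slice $V$ at $x$ and a saturated open $SL_2$-invariant set $U \subset X$ containing $x$, fitting into diagram \eqref{moon}. As noted in the text following Theorem \ref{Luna}, the image $W := U//SL_2 = r(U)$ is open in $Y$ and contains $y$, so the open embedding $W \hookrightarrow Y$ qualifies as an \'etale neighborhood of $y$ in the sense of Proposition \ref{glue}.

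Since $U$ is saturated, $r^{-1}(W) = U$, yielding an $SL_2$-equivariant isomorphism $X \times_Y W \cong U$ which identifies the vector fields induced on the fibred product with the restrictions of $\delta_1, \delta_2$ to $U$. By the assumption of the lemma these restrictions are semi-compatible on $U$, so the hypothesis of Proposition \ref{glue} holds at every $y \in Y$. Applying Proposition \ref{glue} then delivers semi-compatibility of $\delta_1$ and $\delta_2$ on all of $X$.

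The argument is essentially bookkeeping; there is no real obstacle beyond checking that the sets $U$ produced by Luna's slice theorem give \'etale neighborhoods of every point of $Y$. This collective covering of $Y$ is immediate from the fact that every fiber of $r$ contains a closed $SL_2$-orbit, together with the saturation of $U$.
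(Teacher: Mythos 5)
Your proposal is correct and follows essentially the same route as the paper: the paper's (very terse) justification is precisely that every orbit closure contains a closed orbit, hence lies in a saturated Luna open set $U$, and then Proposition \ref{glue} applies; you have simply made explicit the choices $Y=X//SL_2$, $r=\pi$, and $W=U//SL_2$ as the \'etale (open) neighborhoods, together with the identification $X\times_Y W\cong U$ coming from saturation. Nothing further is needed.
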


\begin{notation}
\label{nota.2} {\rm Suppose further that $H_1$ and $H_2$ act on $%
SL_2\times V$ by left multiplication on first factor. The locally nilpotent
vector fields associated with these actions of $H_1$ and $H_2$ are,
obviously, semi-compatible since they are compatible on $SL_2$ (see Example
\ref{example.01}). Consider the $SL_2$-equivariant morphism $G\times
V\rightarrow G\times_{G_x}V$ where $V$, $G=SL_2$, and $G_x$ are as in
diagram (\ref{moon}). By definition $G\times_{G_x}V$ is the quotient of $%
G\times V$ with respect to the $G_x$-action whose restriction to the first
factor is the multiplication from the right. Hence $H_i$-action commutes
with $G_x$-action and, therefore, one has the induced $H_i$-action on $%
G\times_{G_{x}}V$. Following the patten of Notation \ref{nota.1} we denote
the associated locally nilpotent derivations on $G\times_{G_{x}}V$ again by $%
\delta_1$ and $\delta_2$. That is, the $SL_2$-equivariant \'etale morphism $%
\phi : G\times_{G_{x}}V\rightarrow U$ transforms vector field $\delta_i$ on $%
G\times_{G_{x}}V$ into vector field $\delta_i$ on $U$.}
\end{notation}

From Lemma \ref{quasi} and Luna's slice theorem we have immediately the
following.

\begin{lemma}
\label{product} \textrm{(1)} If the locally nilpotent vector fields $%
\delta_1 $ and $\delta_2$ are semi-compatible on $G\times_{G_{x}}V$ then
they are semi-compatible on $U$.

\textrm{(2)} Furthermore, if the isotropy group $G_x$ is finite $\delta_1$
and $\delta_2$ are, indeed, semi-compatible on $G\times_{G_{x}}V$.
\end{lemma}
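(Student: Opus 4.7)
The plan is to read both parts off from the setup essentially by bookkeeping, using the hypotheses already assembled.

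For part (1), I would apply Lemma \ref{quasi} directly to the morphism $\phi:G\times_{G_x}V\to U$ produced by Luna's slice theorem. Being strongly \'etale means the induced morphism $\phi_G:V//G_x\to U//G$ is \'etale and that $G\times_{G_x}V\cong U\times_{U//G}(V//G_x)$, which is exactly the fibred-product shape required by Lemma \ref{quasi}. Surjectivity of $\phi_G$ follows from surjectivity of $\phi$ onto the saturated open set $U$. So I would match symbols as $X:=G\times_{G_x}V$, $X':=U$, $p:=\phi$, $M:=V//G_x$, $M':=U//G$, $r:=\phi_G$, and $\tau':=\pi_G:U\to U//G$; then Lemma \ref{quasi} immediately transfers semi-compatibility of $\delta_1,\delta_2$ on $G\times_{G_x}V$ to semi-compatibility on $U$.

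For part (2), when $G_x$ is finite, the natural morphism $q:G\times V\to G\times_{G_x}V$ is finite and $SL_2$-equivariant for the left multiplication action on the $G$-factor. The first step is to check semi-compatibility on $G\times V$ by hand: since $SL_2//H_i\cong \C^2$ (recalled in the preliminaries), one has $(G\times V)//H_i\cong \C^2\times V$ and therefore $\C[(G\times V)//H_i]=\C[SL_2]^{H_i}\otimes \C[V]$. By Example \ref{example.01}, $\Span(\C[SL_2]^{H_1}\cdot \C[SL_2]^{H_2})$ contains a nonzero ideal $I_0$ of $\C[SL_2]$, so tensoring by $\C[V]$ produces the nonzero ideal $I_0\otimes \C[V]$ of $\C[G\times V]$ inside $\Span(\C[(G\times V)//H_1]\cdot \C[(G\times V)//H_2])$. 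The second step is to apply Lemma \ref{finereformulation} to $p:=q$: the $SL_2$-actions are non-degenerate on both sides (the action is free on $G\times V$ and has finite stabilizers on $G\times_{G_x}V$), and $q$ is finite and $SL_2$-equivariant, so the lemma transports the ideal to $\C[G\times_{G_x}V]$.

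I do not anticipate a genuine obstacle here; the content of the lemma is really just a translation of Luna's theorem into the language of Lemmas \ref{finereformulation} and \ref{quasi}. The only points that demand a moment of care are (a) confirming that the strongly \'etale shape of $\phi$ fits precisely the fibred-product hypothesis of Lemma \ref{quasi} (with the trivial $SL_2$-action on $V//G_x$ and $U//G$), and (b) verifying that the finiteness of $G_x$ is what forces $q$ to be finite and keeps the $SL_2$-action non-degenerate after the quotient, which is exactly where the assumption in (2) is used.
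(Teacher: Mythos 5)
Your proposal is correct and follows essentially the same route as the paper: part (1) is exactly the application of Lemma \ref{quasi} to the strongly \'etale morphism $\phi : G\times_{G_x}V \to U$ from Luna's theorem (with $M=V//G_x$, $M'=U//G$), and part (2) is the paper's Notation \ref{nota.2} (semi-compatibility on $SL_2\times V$ inherited from Example \ref{example.01}) combined with Lemma \ref{finereformulation} applied to the finite $SL_2$-equivariant quotient $G\times V\to G\times_{G_x}V$. The two points you flag for care, the fibred-product shape and the non-degeneracy after the finite quotient, are precisely the hypotheses that need checking, and you check them correctly.
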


Now we have to tackle semi-compatibility in the case of one-dimensional
isotropy subgroup $G_x$ using Proposition \ref{reformulation} as a main
tool. We start with the case of $G_x = \ensuremath{\mathbb{C}}^*$.

\begin{notation}
\label{nota.3} {\rm Consider the diagonal $\ensuremath{\mathbb{C}}^*$%
-subgroup of $SL_2$, i.e. elements of form
\begin{equation*}
s_{\lambda}= \left(
\begin{array}{ccc}
\lambda^{-1} & 0 &  \\
0 & \lambda &
\end{array}
\right) \, .
\end{equation*}
The action of $s_{\lambda}$ on $v\in V$ will be denoted by $\lambda .v$.
When we speak later about the $\ensuremath{\mathbb{C}}^*$-action on $V$ we
mean exactly this action. Set $Y=SL_2\times V$, $Y^{\prime}=SL_2\times_{%
\ensuremath{\mathbb{C}}^*} V$, $Y_i=Y//H_i$, $Y^{\prime}_i=Y^{\prime}//H_i$.
Denote by $\rho_i : Y\rightarrow Y_i$ the quotient morphism of the $H_i$%
-action and use the similar notation for $Y^{\prime}$, $Y_i^{\prime}$. Set $%
\rho =(\rho_1 , \rho_2) : Y \to Y_1 \times Y_2$ and $\rho^{\prime}=(\rho_1^{%
\prime}, \rho_2^{\prime}) : Y^{\prime}\to Y_1^{\prime}\times Y_2^{\prime}$. }
\end{notation}

Note that $Y_i \simeq \ensuremath{\mathbb{C}}^2 \times V$ since $SL_2//%
\ensuremath{\mathbb{C}}_+\simeq \ensuremath{\mathbb{C}}^2$. Furthermore,
looking at the kernels of $\delta_1$ and $\delta_2$ from Example \ref
{example.01} we see for
\begin{equation*}
A= \left(
\begin{array}{ccc}
a_1 & a_2 &  \\
b_1 & b_2 &
\end{array}
\right) \in SL_2
\end{equation*}
the quotient maps $SL_2\rightarrow SL_2//H_1 \simeq \ensuremath{\mathbb{C}}%
^2 $ and $SL_2\rightarrow SL_2//H_2 \simeq \ensuremath{\mathbb{C}}^2$ are
given by $A \mapsto (a_1,a_2)$ and $A \mapsto (b_1,b_2)$
respectively. Hence morphism $\rho : SL_2 \times V = Y \to Y_1\times Y_2
\simeq \ensuremath{\mathbb{C}}^4 \times V \times V$ is given by
\begin{equation}  \label{rho}
\rho (a_1,a_2,b_1,b_2,v)= (a_1,a_2,b_1,b_2,v,v)\, .
\end{equation}
As we mentioned before, to define $Y^{\prime}=SL_2\times_{%
\ensuremath{\mathbb{C}}^*}V$ we let $\ensuremath{\mathbb{C}}^*$ act on $SL_2$
via right multiplication. Since $H_1$ and $H_2$ act on $SL_2$ from the left,
there are well-defined $\ensuremath{\mathbb{C}}^*$-actions on $Y_1$ and on $%
Y_2$ and a torus $\ensuremath{\mathbb{T}}$-action on $Y_1\times Y_2$, where $%
\ensuremath{\mathbb{T}} =\ensuremath{\mathbb{C}}^*\times \ensuremath{%
\mathbb{C}}^*$. Namely,
\begin{equation}  \label{torus}
(\lambda , \mu ).(a_1,a_2,b_1,b_2,v,w)=(\lambda a_1,\lambda^{-1} a_2,\mu
b_1,\mu^{-1} b_2,\lambda .v,\mu .w)
\end{equation}
for $(a_1,a_2,b_1,b_2,v,w)\in Y_1\times Y_2$ and $(\lambda , \mu )\in %
\ensuremath{\mathbb{T}}$.

Since the $\ensuremath{\mathbb{C}}^*$-action on $Y$ and the action of $H_i,
\, i=1,2$ are commutative, the following diagram is also commutative.

\begin{equation}  \label{diag}
\begin{CD} Y @> \rho >> Y_1 \times Y_2\\ @VVpV @VV q V\\ Y' @> \rho ' >>
Y'_1\times Y'_2, \end{CD}
\end{equation}
where $q$ (resp. $p$) is the quotient map with respect to the $%
\ensuremath{\mathbb{T}}$-action (resp. $\ensuremath{\mathbb{C}}^*$-action).
It is also worth mentioning that the $\ensuremath{\mathbb{C}}^*$-action on $%
Y $ induces the action of the diagonal of $\ensuremath{\mathbb{T}}$ on $\rho
(Y)$, i.e. for every $y \in Y$ we have $\rho (\lambda .y) = (\lambda ,
\lambda ). \rho (y)$.

\begin{lemma}
\label{easy} Let $Z=\rho (Y)$ in diagram (\ref{diag}) and $Z^{\prime}$ be
the closure of ${\rho ^{\prime}(Y^{\prime})}$.

\textrm{(i)} The map $\rho:Y\rightarrow Z$ is an isomorphism and $Z$ is the
closed subvariety of $Y_1\times Y_2= \ensuremath{\mathbb{C}}^4 \times V
\times V$ that consists of points $(a_1,a_2,b_1,b_2,v,w)\in Y_1\times Y_2$
satisfying the equations $a_1b_2-a_2b_1=1$ and $v=w$.

\textrm{(ii)} Let $T$ be the $\ensuremath{\mathbb{T}}$-orbit of $Z$ in $%
Y_1\times Y_2$ and $\bar T$ be its closure. Then $T$ coincides with the $(%
\ensuremath{\mathbb{C}}^*\times 1)$-orbit (resp. $(1 \times \C^*)$-orbit) of $Z$.
Furthermore, for each $(a_1,a_2,b_1,b_2,v,w) \in \bar T$ one has $\pi
(v)=\pi (w)$ where $\pi : V \to V//\ensuremath{\mathbb{C}}^*$ is the
quotient morphism.

\textrm{(iii)} The restriction of diagram (\ref{diag}) yields the following
\begin{equation}
\begin{CD}\label{YZ} Y @> \rho >> Z \, \, \, \subset \, \, \, \, \, \bar T\\
@VV p V \, \, \, \, \, \, \, \, \, \, \, \, \, \, \, \, @ VV q V\\ Y' @>
\rho' >> q(Z) \subset Z' \end{CD}\
\end{equation}
where $Y^{\prime}=Y//\ensuremath{\mathbb{C}}^*=Y/\ensuremath{\mathbb{C}}^*$,
$q$ is the quotient morphism of the $\ensuremath{\mathbb{T}}$-action (i.e. $%
Z^{\prime}= \bar T//\ensuremath{\mathbb{T}}$), and $q(Z)=\rho^{\prime}(Y^{%
\prime})$.
\end{lemma}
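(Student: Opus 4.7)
All three parts are essentially formal unpacking of the definitions of the relevant quotients and of the $\mathbb{T}$-action~(\ref{torus}); no delicate algebraic geometry enters here. The real work for the one-dimensional isotropy case will only begin after the lemma, when one feeds this structural picture into Proposition \ref{reformulation}.

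For part (i), I would simply read the claim off formula~(\ref{rho}): the morphism $\rho(a_1,a_2,b_1,b_2,v)=(a_1,a_2,b_1,b_2,v,v)$ lands inside the closed subvariety $\{a_1b_2-a_2b_1=1,\ v=w\}$ of $Y_1\times Y_2=\mathbb{C}^4\times V\times V$, because the $(a_i,b_i)$ satisfy the $SL_2$-relation and the two copies of $V$ coincide. Projection onto the first five coordinates is a regular inverse on this subvariety, so $\rho:Y\to Z$ is an isomorphism and $Z$ has the stated description.

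For part (ii), the key observation is recorded just before the lemma: the $\mathbb{C}^*$-action on $Y$ corresponds to the diagonal $\mathbb{C}^*\subset\mathbb{T}$-action on $Z$, so $Z$ is invariant under this diagonal. Hence for any $(\lambda,\mu)\in\mathbb{T}$ and $z\in Z$ one writes $(\lambda,\mu).z=(\lambda\mu^{-1},1)\cdot\bigl((\mu,\mu).z\bigr)\in(\mathbb{C}^*\times 1)\cdot Z$, so $T=(\mathbb{C}^*\times 1)\cdot Z$; the identification $T=(1\times\mathbb{C}^*)\cdot Z$ is symmetric. For the statement about $\bar T$, note that on $Z$ itself $v=w$ and hence trivially $\pi(v)=\pi(w)$; under~(\ref{torus}) the $V$-components become $\lambda.v$ and $\mu.w$, and since $\pi:V\to V//\mathbb{C}^*$ is $\mathbb{C}^*$-invariant the equality $\pi(v)=\pi(w)$ persists on the constructible set $T$, and so extends to $\bar T$ by continuity of $\pi$.

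For part (iii), the content is that diagram~(\ref{diag}) restricts correctly and that $q(Z)=\rho'(Y')$. Since the $\mathbb{C}^*$-action on $Y$ commutes with each $H_i$-action, the iterated quotients commute: $Y'_i=(Y//H_i)//\mathbb{C}^*=Y_i//\mathbb{C}^*$, whence $Y'_1\times Y'_2=(Y_1\times Y_2)//\mathbb{T}$, identifying $q$ in~(\ref{diag}) as the $\mathbb{T}$-quotient morphism; restricted to the $\mathbb{T}$-invariant closed set $\bar T$ it yields $Z'=\bar T//\mathbb{T}$. Commutativity of~(\ref{diag}) then gives $q(Z)=q(\rho(Y))=\rho'(p(Y))=\rho'(Y')$, completing diagram~(\ref{YZ}). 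The only mild bookkeeping difficulty is keeping the several $\mathbb{C}^*$-actions (diagonal in $\mathbb{T}$, the two factor actions on the $a_i$ and $b_i$, and the original action on $V$) straight in part (ii); beyond that the lemma is essentially a tautology, and the genuine difficulty of the paper is postponed to the subsequent application of Proposition~\ref{reformulation} on $Y'=SL_2\times_{\mathbb{C}^*}V$.
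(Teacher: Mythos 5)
Your argument is correct and follows essentially the same route as the paper's: part (i) is read off formula (\ref{rho}), part (ii) comes from the invariance of $Z$ under the diagonal $\C^*$-subgroup of $\T$ together with continuity (closedness of the condition $\pi(v)=\pi(w)$), and part (iii) from commutativity of the various quotients. The only step you leave implicit is why $Z'=\bar T//\T$: one needs that $q(\bar T)$ is closed because $\bar T$ is a closed $\T$-invariant set (Proposition \ref{redu}(4)) and that the restriction of a quotient morphism to a closed invariant subvariety is again a quotient morphism, which is exactly what the paper cites at this point.
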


\begin{proof} The first statement is an immediate consequence
of formula (\ref{rho}). The beginning of the second statement
follows from the fact that the action of the diagonal
$\C^*$-subgroup of $\T$ preserves $Z$. This implies that for every
$t=(a_1,a_2,b_1,b_2,v,w) \in T$ points $v,w \in V$ belong to the
same $\C^*$-orbit and, in particular, $\pi (v) = \pi (w)$. This
equality holds for each point in $\bar T$ by continuity.

In diagram (\ref{diag}) $Y'=Y//\C^*=Y/\C^*$ because of Proposition
\ref{redu} (3) and Lemma \ref{twist}, and the equality $
q(Z)=\rho' (Y')$ is the consequence of the commutativity of that
diagram. Note that $\bar T$ is $\T$-invariant. Hence $q(\bar T)$
coincides with $Z'$ by Proposition \ref{redu} (4). Being the
restriction of the quotient morphism, $q|_{\bar T} : \bar T \to
Z'$ is a quotient morphism itself (e.g., see \cite{D}) which
concludes the proof.

\end{proof}

\begin{lemma}
\label{function} There is a rational $\ensuremath{\mathbb{T}}$%
-quasi-invariant function $f$ on $\bar T$ such that for $%
t=(a_1,a_2,b_1,b_2,w,v) \in T$ one has

\textrm{(1)} ${\frac {1} { f( t)}} a_1b_2-f(t)a_2b_1=1$ and $w=f( t).v$;

\textrm{(2)}
the set $\bar T \setminus T$ is contained in $(f)_0\cup (f)_{\infty}$;

\textrm{(3)} $f$ generates a regular function on a normalization $T_N$ of $T$
\end{lemma}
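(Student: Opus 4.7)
The plan is to realize $T$ as the image of the orbit map $\Phi\colon \T \times Z \to Y_1 \times Y_2$, $(\lambda, \mu, z) \mapsto (\lambda,\mu).z$, and to construct $f$ from the natural character $(\lambda, \mu) \mapsto \lambda/\mu$ of $\T$. On $\T \times Z$ consider the action of the diagonal subgroup $\C^*_{\rm diag} \subset \T$ given by $c \cdot (\lambda, \mu, z) = (c\lambda, c\mu, c^{-1}.z)$, where $c.z := (c,c).z$ is the restriction of the $\T$-action to the diagonal (which preserves $Z$ by the preceding lemma). Since the action is free on the $\T$-factor, the geometric quotient $T_N := (\T \times Z)/\C^*_{\rm diag}$ exists as a normal algebraic variety. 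The map $\Phi$ is $\C^*_{\rm diag}$-invariant and so factors as $\Phi = \nu \circ q$, with $q\colon \T \times Z \to T_N$ the quotient and $\nu\colon T_N \to T$ surjective. For a generic $z \in Z$ whose $V$-component $v'$ has trivial $\C^*$-stabilizer, the only $\T$-elements carrying $z$ into $Z$ lie in $\C^*_{\rm diag}$, so $\nu$ is birational; together with the finiteness of its fibers (readable off the quadratic identity $u^2 a_2 b_1 + u - a_1 b_2 = 0$ satisfied by $u = \lambda/\mu$ on $T$), this exhibits $T_N$ as a normalization of $T$.

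Set $f([(\lambda, \mu, z)]) := \lambda/\mu$ on $T_N$; it is $\C^*_{\rm diag}$-invariant, hence well-defined, and it is a regular morphism $T_N \to \C^*$ with neither zeros nor poles, which is exactly property (3). Via the birational identification $k(T_N) = k(T) = k(\bar T)$ we regard $f$ as a rational function on $\bar T$. The $\T$-quasi-invariance with character $(\lambda', \mu') \mapsto \lambda'/\mu'$ is immediate, since acting by $(\lambda', \mu')$ sends $[(\lambda, \mu, z)]$ to $[(\lambda'\lambda, \mu'\mu, z)]$.

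Property (1) is a direct computation. For $t = (\lambda,\mu).z \in T$ with $z = (a_1', a_2', b_1', b_2', v', v')$ satisfying $a_1' b_2' - a_2' b_1' = 1$, one obtains $a_1 b_2 = (\lambda/\mu) a_1' b_2'$, $a_2 b_1 = (\mu/\lambda) a_2' b_1'$, $w = \lambda.v'$, and $v = \mu.v'$. With $f(t) = \lambda/\mu$ this yields $\frac{1}{f(t)} a_1 b_2 - f(t) a_2 b_1 = a_1' b_2' - a_2' b_1' = 1$ and $w = (\lambda/\mu).(\mu.v') = f(t).v$, as desired.

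The main obstacle is property (2). I would argue by contradiction: suppose some $t_\infty \in \bar T \setminus T$ has $f(t_\infty) = c \in \C^*$ and pick $t_n \in T$ with $t_n \to t_\infty$. Writing $t_n = (\lambda_n, \mu_n).z_n$ and using the $\C^*_{\rm diag}$-freedom in the representation to normalize $\mu_n = 1$ (after which the new $z_n$, namely $(\mu_n, \mu_n).z_n$ in the original notation, still lies in $Z$), we may assume $\lambda_n = f(t_n) \to c$. Then $z_n = (\lambda_n^{-1}, 1).t_n$ converges to $(c^{-1}, 1).t_\infty$ in $Y_1 \times Y_2$; since $Z$ is closed there (being cut out by $a_1' b_2' - a_2' b_1' = 1$ together with equality of the two $V$-coordinates), the limit $z_\infty$ lies in $Z$, forcing $t_\infty = (c, 1).z_\infty \in T$, a contradiction. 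Making this properness-type argument rigorous at non-generic points, and correspondingly verifying that $\nu\colon T_N \to T$ is finite globally rather than merely quasi-finite, is the principal technical difficulty.
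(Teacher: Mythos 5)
Your construction of $f$ and your verifications of (1) and (2) are essentially the paper's own: the authors write $t=(\lambda ,1).z_0$ with $z_0\in Z$, observe that $\lambda$ satisfies the quadratic $\lambda^{-1}a_1b_2-\lambda a_2b_1=1$, and take for $f$ the branch that is single-valued off the locus $S$ where both roots carry $v$ to $w$ --- which is exactly your $\lambda/\mu$ read off from $\C^{*}\times Z\cong(\T\times Z)/\C^{*}_{\rm diag}$; and their proof of (2) proceeds, as yours does, by extracting a convergent subsequence of $z_n=(f(t_n)^{-1},1).t_n$ inside the closed subvariety $Z$.

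The genuine gap is in (3), and it is not merely the unfinished verification you flag at the end: the morphism $\nu:\C^{*}\times Z\to T$, $(u,z)\mapsto (u,1).z$, is in fact \emph{not} finite, so your $T_N$ is not a normalization of $T$ but only a proper open subset of one. Concretely, the slice $V$ always contains a $\C^{*}$-fixed point (the base point $x$ itself is fixed by $G_x$), and over a point $t=(0,c,b_1,b_2,v',v')\in T$ with $v'$ fixed and $cb_1\ne 0$, $b_2\ne 0$, the points $(u_n,z_n)\in\C^{*}\times Z$ with $u_n\to 0$ and $z_n=\bigl((1+u_ncb_1)/b_2,\ u_nc,\ b_1,\ b_2,\ v',v'\bigr)$ satisfy $\nu(u_n,z_n)\to t$ while $(u_n,z_n)$ leaves every compact subset of $\C^{*}\times Z$; hence $\nu$ is quasi-finite and birational but not proper. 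Consequently the (obvious) regularity of $f=u$ on $\C^{*}\times Z$ says nothing yet about $f$ on the complement of this open set in the true normalization $T_N$, and that complement sits precisely over the troublesome locus: the two-valued set $S$ and the fibres whose $V$-coordinate is $\C^{*}$-fixed. This is where the real content of the paper's proof of (3) lies: $f\circ\nu$ is shown to be locally bounded near the points of $S$ whose $V$-coordinate is not fixed (so it extends by the Riemann extension theorem), and the remaining part of $S$ is shown to have codimension at least $2$ (so the Hartogs theorem applies). Some equivalent of that analysis must be supplied before (3) can be considered proved.
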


\begin{proof}
By Lemma \ref{easy} (ii) any point $t= (a_1,a_2,b_1,b_2,w,v) \in
T$ is of form $t= (\lambda , 1) .z_0$ where $z_0 \in Z$ and
$\lambda \in \C^*$. Hence formula (\ref{torus}) implies that
$w=\lambda.v$ and $\lambda^{-1}a_1b_2-\lambda a_2b_1=1$. The last
equality yields two possible values (one of which can be $\infty$
or $0$ if any of numbers $a_1,a_2,b_1$, or $b_2$ vanish)
$$\lambda_\pm = {\frac {-1\pm \sqrt{ 1 +4a_1a_2b_1b_2}}
{2a_2b_1}}$$ and we assume that
$$\lambda =\lambda_- ={\frac {-1- \sqrt{ 1 +4a_1a_2b_1b_2}}
{2a_2b_1}},$$ i.e. $w =\lambda_- .v$. Note that $\lambda_+ .v =w$
as well only when $$\tau ={\frac{\lambda_+}{\lambda_-}}= {\frac {-1+ \sqrt{ 1
+4a_1a_2b_1b_2}}{-1- \sqrt{ 1 +4a_1a_2b_1b_2}}}$$ is in the
isotropy group of $v$.

Consider the set of points $t\in T$ such that $v$ is not a fixed point of the $\C^*$-action on
$V$ and $\tau .
v=v$.  Denote its closure by $S$. Since $S$ is a proper subvariety of $T$, one has a well-defined branch $\lambda_-$ of the two-valued function
$\lambda_\pm$ on the complement to $S$. Its extension
to $\bar T$, which is denoted by $f$, satisfies (1).

Let $t_n\in T$ and $t_n \to t \in \bar T$ as $n \to \infty$. By
Lemma \ref{easy} (ii) $t_n$ is of form $t_n=(f(t_n)a_1^n,{\frac
{1} { f(t_n)}}a_2^n,b_1^n,b_2^n, f(t_n).v_n,v_n)$ where $$
\left(\begin{array}{ccc}
a_1^n & a_2^n\\
b_1^n & b_2^n \end{array}\right) \in SL_2  \, \, \, {\rm and } \,
\,  v=\lim_{n \to \infty} v_n \, .
$$ If sequences $\{f (t_n) \}$ and $\{ 1/f(t_n) \}$ are bounded then
switching to a subsequence one can suppose that  $f(t_n) \to f(t)
\in \C^*$, $w=f(t)v$, and $t=(f(t)a_1',{\frac {1} {
f(t)}}a_2',b_1',b_2', f(t).v,v)$ where
$$\left(\begin{array}{ccc}
a_1' & a_2'\\
b_1' & b_2' \end{array}\right) \in SL_2 \, , $$ i.e. $t \in T$.
Hence $\bar T \setminus T$ is contained in  $ ((f)_0 \cup (f)_{\infty})$ which is (2).

Function $f$ is regular  on $T\setminus S$ by construction. Consider $t\in S$ with
$w$ and $v$ in the same non-constant $\C^*$-orbit, i.e. $w=\lambda .
v$ for some $\lambda \in \C^*$. Then $w=\lambda' .v$ if and only if only $\lambda'$ belongs to the coset $\Gamma$ of the isotropy subgroup  of $v$ in $\C^*$. For any sequence of points $t_n$ convergent to $t$
one can check that  $f(t_n) \to \lambda
\in \Gamma$ by continuity, i.e. $f$ is bounded in a neighborhood of $t$. Let $\nu : T_N \to T$ be a normalization morphism. Then
function $f \circ \nu$ extends regularly to $\nu^{-1} (t)$ by the Riemann extension
theorem. The set of point of $S$ for which $v$ is a fixed point of the $\C^*$-action is of codimension
at least 2 in $T$. By the Hartogs' theorem $f\circ \nu$ extends regularly to $T_N$  which concludes (3).

\end{proof}

\begin{remark}\label{fabrizio.1}  Consider the rational map $\kappa : T  \to Z$ given by $t \mapsto ({\frac{1}{f(t)}}, 1). t$.  It is regular on $T\setminus S$ and if $t\in T\setminus S$ and $z \in Z$ are such that
$t=(\lambda ,1).z$ then $\kappa (t) =z$. In particular $\kappa$ sends $\T$-orbits from $T$ into $\C^*$-orbits of $Z$. Furthermore, morphism $\kappa_N=\kappa \circ \nu : T_N \to Z$ is regular by the same reason as function $f \circ \nu$ is.

\end{remark}

\begin{lemma}
\label{codim2} Let $E_i= \{ t=(a_1,a_2,b_1,b_2, w, v )\in \bar T | b_i=0\}$ and
$\bar T^b$ coincide with $T \cup ((f)_0 \setminus E_2) \cup ((f)_{\infty} \setminus E_1)$.
Suppose that $\bar T_N^b $ is a normalization of $ \bar T^b$. Then there is a regular extension
of $\kappa_N : T_N \to Z$ to a morphism $\bar \kappa_N^b : \bar T_N^b \to Z$.
\end{lemma}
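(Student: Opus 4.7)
\noindent\emph{Proof strategy.}
The map $\kappa$ on $T$ is given by $\kappa(t)=(a_1/f(t),\,a_2 f(t),\,b_1,\,b_2,\,v,\,v)$, so the components $b_1,b_2,v,v$ are restrictions of coordinate functions on $Y_1\times Y_2$ and are already regular on all of $\bar T$, hence on $\bar T_N^b$. The task therefore reduces to extending the two rational functions $a_1/f$ and $a_2 f$, regular on $T_N$ by Lemma \ref{function}(3), to regular functions on $\bar T_N^b$; once this is done I will define $\bar\kappa_N^b$ by these six components and check that the image lies in $Z$.

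Near a point $\tilde t\in \bar T_N^b$ lying over $(f)_0\setminus E_2$, the function $f$ extends regularly with value $0$ (by the same Hartogs-plus-Riemann argument used in Lemma \ref{function}(3), now applied on $\bar T^b$), while $b_2\ne 0$. Rewriting the identity of Lemma \ref{function}(1) as
\[
\frac{a_1}{f}=\frac{1+f\,a_2 b_1}{b_2},
\]
the right-hand side is regular in a neighborhood of $\tilde t$, giving the desired extension of $a_1/f$; and $a_2 f$ is regular as a product of regular functions.

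Symmetrically, near $\tilde t$ lying over $(f)_\infty\setminus E_1$, I would use $g=1/f$, regular at $\tilde t$ with value $0$, together with the fact that $b_1\ne 0$. Multiplying the same identity by $g$ and solving gives
\[
a_2 f=\frac{g\,a_1 b_2-1}{b_1},\qquad \frac{a_1}{f}=a_1 g,
\]
both regular at $\tilde t$. The image of $\bar\kappa_N^b$ then lies in $Z$: the last two coordinates coincide by construction, and the defining relation $(a_1/f)\,b_2-(a_2 f)\,b_1=1$ of $Z$ holds on the dense open $T_N$ by Lemma \ref{function}(1), hence on all of $\bar T_N^b$ by continuity.

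The main point to handle carefully is the regular extension of $f$ and $1/f$ across the added points of $\bar T_N^b$, i.e.\ that on the normalization the irreducible components of $(f)_0$ not contained in $E_2$ (respectively of $(f)_\infty$ not contained in $E_1$) do give the zero (respectively polar) divisor of a regular function. The exclusion of $E_2$ from the zero locus and of $E_1$ from the polar locus in the very definition of $\bar T^b$ is exactly what makes the algebraic identity of Lemma \ref{function}(1) solve for the extensions of $a_1/f$ and $a_2 f$ in closed form, rather than merely bounding them and relying on a less direct extension theorem.
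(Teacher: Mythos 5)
Your formula for $\kappa$ and the two closed-form extensions are correct, and the underlying computation is the same one the paper uses: the paper's proof also exploits the determinant relation $a_1^nb_2^n-a_2^nb_1^n=1$ together with $b_2\neq 0$ to identify the limiting value $\kappa(t)=(1/b_2,0,b_1,b_2,v,v)$ as $f(t_n)\to 0$. The difference is in execution. The paper first reduces, via the Riemann extension theorem on the normalization, to producing a merely \emph{continuous} extension of $\kappa$, and then runs a sequence argument $t_n\to t$; you instead treat $a_1/f$ and $a_2f$ as rational functions, solve the identity of Lemma \ref{function}(1) for each of them in terms of manifestly regular data, and read off regularity directly. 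Your version is cleaner, stays entirely algebraic, and makes the target relation $(a_1/f)b_2-(a_2f)b_1=1$ (hence $\bar\kappa_N^b(\bar T_N^b)\subset Z$) immediate by density rather than by inspection of the limit point.

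There is one point you pass over which the paper disposes of in the first sentence of its proof: $\bar T^b$ may contain points of $(f)_0\cap(f)_\infty$ (for instance a point of $(f)_0\setminus E_2$ that also lies on the polar divisor of $f$). At such a point your claim that ``$f$ extends regularly with value $0$'' is false, and neither of your two formulas applies --- the first needs $f$ regular, the second needs $1/f$ regular, and both fail simultaneously there. The paper handles this by observing that $(f)_0\cap(f)_\infty$ has codimension $2$ in $\bar T$ and invoking the Hartogs phenomenon on the normalization; you need the same remark: define $\bar\kappa_N^b$ by your formulas away from $\nu^{-1}\bigl((f)_0\cap(f)_\infty\bigr)$ and then extend across this codimension-$\geq 2$ subset using normality of $\bar T_N^b$ and the fact that the target $Z$ is affine. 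With that one line added, your argument is complete.
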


\begin{proof}

Since the set $(f)_0 \cap (f)_{\infty}$ is of codimension 2 in $\bar T$, the
Hartogs' theorem implies that
 it suffices to prove the regularity of $\bar \kappa_N^b$ on the normalization of $\bar T^b \setminus ((f)_0\cap (f)_{\infty})$.  Furthermore, by the Riemann extension theorem it is enough to construct a continuous extension of $\kappa$ from $T\setminus S$ to $\bar T^b \setminus (S \cup ((f)_0\cap (f)_\infty ))$.


By Lemma \ref{function} (2) we need to consider this extension, say, at
 $t=(a_1,a_2,b_1,b_2,w,v) \in (f)_0 \setminus (f)_{\infty}$. Let
 $t_n \to t$ as $n \to \infty$ where $$
 t_n=(f(t_n)a_1^n,{\frac {1} {
f(t_n)}}a_2^n,b_1^n,b_2^n, f(t_n).v_n,v_n)\in T$$  with
$a_1^nb_2^n-a_2^nb_1^n=1$ and $f(t_n) \to 0$.
Perturbing, if necessary, this sequence $\{ t_n \}$ we can suppose every $t_n\notin S$, i.e.
$\kappa (t_n)=(a_1^n,a_2^n,b_1^n,b_2^n,v_n,v_n)$.
 Note that $\lim v_n =v$,  $b_k=\lim b_k^n, \, k=1,2$ and $a_2^n\to 0$ since $a_2$ is finite. Hence $1=a_1^nb_2^n-a_2^nb_1^n\approx a_1^nb_2$ and $a_1^n \to 1/b_2$ as $n \to\infty$.  Now we get a continuous extension of $\kappa$ by putting $\kappa (t)=(1/b_2, 0,b_1,b_2,v,v)$. This yields the desired conclusion.

\end{proof}

\begin{remark}\label{fabrizio.2} If we use the group $(1\times \C^*)$ instead of the group $(\C^* \times 1)$ from Lemma \ref{easy} (ii) in our construction this would lead to the replacement of $f$ by $f^{-1}$. Furthermore for the variety $\bar T^a= T\cup ((f)_0\setminus \{ a_1=0 \}) \cup ((f)_{\infty} \setminus \{ a_2=0 \})$ we obtain a morphism $\bar \kappa^a_N : \bar T_N^a \to  Z$ similar to $\bar \kappa_N^b$.

\end{remark}

The next fact is intuitively obvious but requires some work.

\begin{lemma}\label{fabrizio.3} The complement $\bar T^0$ of $\bar T^a \cup \bar T^b$  in $\bar T$(which is
$\bar T^0=(\bar T \setminus T) \cap \bigcup_{i\ne j} \{ a_i =b_j= 0 \} $) has codimension at least 2.

\end{lemma}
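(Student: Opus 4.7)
The plan is to unravel $\bar T^0$ using Lemma~\ref{function}(2) and the algebraic relation satisfied by $f$, then bound each resulting piece via Krull's principal ideal theorem and an analysis of the irreducible components of $(f)_0$.

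Multiplying the relation $\lambda^{-1}a_1b_2-\lambda a_2 b_1=1$ on $T$ by $\lambda$ yields the identity $a_1b_2=f+f^2a_2b_1$ in the function field of $\bar T$; hence $(f)_0\subset\{a_1b_2=0\}$ and, by the same argument applied to $1/f$, $(f)_\infty\subset\{a_2b_1=0\}$.  Combined with $\bar T\setminus T\subset(f)_0\cup(f)_\infty$ (Lemma~\ref{function}(2)), unraveling the definitions of $\bar T^a$ and $\bar T^b$ shows
$$\bar T^0 \subset \bigl((f)_0\cap\{a_1=b_2=0\}\bigr) \cup \bigl((f)_\infty\cap\{a_2=b_1=0\}\bigr) \cup \Xi,$$
where $\Xi$ consists of the two ``cross'' terms $(f)_0\cap\{a_2=b_1=0\}$ and $(f)_\infty\cap\{a_1=b_2=0\}$.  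Because $a_2$ does not vanish identically on $(f)_0$ (as witnessed by the explicit Case A limit below), Krull's theorem shows $(f)_0\cap\{a_2=0\}$ is pure of codimension~$2$ in $\bar T$, so $\Xi$ already has codimension $\geq 2$; by Remark~\ref{fabrizio.2} the involution $f\leftrightarrow 1/f$ interchanges the two main pieces, so it is enough to bound $(f)_0\cap\{a_1=b_2=0\}$.

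For this main step I would analyze the irreducible components of $(f)_0$ using the parametrization of $T$ from Notation~\ref{nota.3}: a finite limit in $(f)_0$ arises from some $(\lambda_n,A_n,v_n)\in T$ with $\lambda_n\to 0$, and the $SL_2$ relation $a_1'b_2'-a_2'b_1'=1$ forces one of two behaviors.  \emph{Case A}: $a_1'$ stays bounded, so $a_1=\lambda a_1'\to 0$, $a_2'\to 0$ (so that $a_2=\lambda^{-1}a_2'$ remains finite), and then $a_1'b_2'\to 1$, forcing $b_2\neq 0$ in the limit.  \emph{Case B}: $a_1'\sim a_1/\lambda$ diverges while $b_2'\to 0$, so $b_2=0$ while $a_1$ is arbitrary.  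A ``mixed'' limit in which both $a_1=0$ and $b_2=0$ would push either $a_2$ or $b_1$ to infinity via the $SL_2$ relation and exit the affine ambient.  Consequently $(f)_0=D_A\cup D_B$ with $D_A\subset\{a_1=0\}$ and $b_2\not\equiv 0$ on $D_A$, while $D_B\subset\{b_2=0\}$ with $a_1\not\equiv 0$ on $D_B$.

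It follows that $(f)_0\cap\{a_1=b_2=0\}=(D_A\cap\{b_2=0\})\cup(D_B\cap\{a_1=0\})$, and Krull's theorem applied to the nonzero regular function $b_2$ on the irreducible $D_A$ and to $a_1$ on the irreducible $D_B$ gives each piece codimension $\geq 1$ in the respective component, hence codimension $\geq 2$ in $\bar T$.  Combining with the symmetric bound for $(f)_\infty\cap\{a_2=b_1=0\}$ and the earlier bound on $\Xi$, $\bar T^0$ has codimension at least~$2$ in $\bar T$.  The main anticipated obstacle is formalizing the Case A / Case B dichotomy, i.e., ruling out additional irreducible components of $(f)_0$; I expect this to follow cleanly from the fact that $f$ satisfies the quadratic $f^2a_2b_1+f-a_1b_2=0$ of Lemma~\ref{function}, which realizes $\bar T$ as a generically two-sheeted cover whose two sheets $\lambda_\pm$ correspond precisely to Case A and Case B respectively.
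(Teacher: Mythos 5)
Your reduction of $\bar T^0$ to the four sets $(f)_0\cap\{a_i=b_j=0\}$ and $(f)_\infty\cap\{a_i=b_j=0\}$ via the identity $a_1b_2=f+f^2a_2b_1$ is fine, but the proof then hinges entirely on the claim that no irreducible component of $(f)_0$ is contained in $\{a_1=b_2=0\}$ (and symmetrically for $(f)_\infty$), and that claim is exactly where the argument breaks. Your justification --- that a ``mixed'' limit with $a_1=b_2=0$ would force $a_2$ or $b_1$ to infinity --- is false. Take $t_n=(\lambda_n,1).z_n$ with $\lambda_n=1/n^2$ and $z_n=(a_1^n,a_2^n,b_1^n,b_2^n,v,v)\in Z$ given by $a_1^n=n$, $b_2^n=1/n$, $a_2^n=0$, $b_1^n=c$: the determinant condition holds, $f(t_n)=1/n^2\to 0$, and $t_n=(1/n,\,0,\,c,\,1/n,\,(1/n^2).v,\,v)\to(0,0,c,0,\lim_{\mu\to 0}\mu.v,\,v)$, a point of $\bar T\setminus T$ with all coordinates finite and $a_1=b_2=0$ (whenever $\lim_{\mu\to 0}\mu.v$ exists in $V$). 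So mixed limits do occur; your dichotomy only yields ``$a_1'$ bounded $\Rightarrow a_1=0$ and $b_2\ne 0$'' versus ``$a_1'$ unbounded $\Rightarrow b_2=0$ with $a_1$ \emph{arbitrary}, including $0$.'' Ruling out an entire codimension-one component inside $\{a_1=b_2=0\}$ therefore requires a dimension bound in the $V\times V$ directions, which your argument never touches: a priori the pairs $(w,v)$ occurring over $\{a_1=b_2=0\}$ could sweep out all of $V\times_{V//\C^*}V$ (the only constraint supplied by Lemma \ref{easy}(ii)), a set of dimension $\dim V+1$, which together with the two free coordinates $a_2,b_1$ would give codimension $1$ and the lemma would fail.

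That missing bound is precisely the content of the paper's proof: using the annuli $A_n$ inside nonconstant $\C^*$-orbits, a plurisubharmonic exhaustion near the fixed point $\bar v$ common to $\overline{O(v)}$ and $\overline{O(w)}$, and the maximum principle, the paper shows that every pair $(w,v)$ arising in $\bar T\setminus T$ satisfies $\mu.v\to\bar v$ as $\mu\to 0$ and $\lambda.w\to\bar v$ as $\lambda\to\infty$; the set of such pairs (an attracting cell paired with a repelling cell over each fixed-point component) has dimension at most $\dim V$, whence $\dim\big((\bar T\setminus T)\cap\{a_1=b_2=0\}\big)\le\dim V+2=\dim\bar T-2$. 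Your Krull-type cutting would become legitimate once this bound is in place, since it is what excludes components of $(f)_0$ from lying inside $\{a_1=b_2=0\}$; as written, however, the proposal assumes the hard part rather than proving it.
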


\begin{proof}
Let $t_n \to t=(a_1,a_2,b_1,b_2,w,v)$ be as in the proof of Lemma \ref{codim2}.
Since for a general point of the slice
$V$ the isotropy group is finite after perturbation we can suppose
that each $v_n$ is contained in a non-constant $\C^*$-orbit
$O_n\subset V$.
Treat $v_n$ and $f(t_n).v_n$ as numbers in $\C^*
\simeq O_n$ such that $f(t_n).v_n=f(t_n)v_n$. Let $|v_n|$ and
$|f(t_n).v_n|$ be their absolute values. Then one has the annulus
$A_n =\{ |f(t_n).v_n| < \zeta <|v_n| \} \subset O_n$, i.e. $\zeta
= \eta v_n$ where $|f(t_n)| <|\eta | <1$ for each $\zeta \in A_n$.
By Lemma \ref{easy} (iii) $\pi (v) =\pi (w)$ but by Lemma
\ref{function} (3) the $\C^*$-orbit $O(v)$ and $O(w)$ are
different unless $w=v$ is a fixed point of the $\C^*$-action. In
any case, by Proposition \ref{redu} (2) the closures of these
orbits meet at a fixed point $\bar v$ of the $\C^*$-action.

Consider a compact neighborhood $W=\{ u\in V | \phi (u) \leq 1 \}$
of $\bar v$ in $V$ where $\phi$ is a plurisubharmonic function on
$V$ that vanishes at $\bar v$ only. Note that the sequence $\{
(\lambda , \mu). t_n \} $ is convergent to $ (\lambda a_1, a_2/
\lambda ,\mu b_1,b_2/\mu ,\lambda . w, \mu .v)$. In particular,
replacing $\{ t_n \} $ by $\{ (\lambda , \mu). t_n \} $ with
appropriate $\lambda$ and $\mu$ we can suppose that the boundary
$\partial A_n$ of any annulus $A_n$ is contained in $W$ for
sufficiently large $n$. By the
maximum principle $\bar A_n \subset W$.
The limit $A=\lim_{n \to \infty} \bar A_n$ is a compact subset of
$W$ that contains both $v$ and $w$, and also all points $\eta . v$
with $0< |\eta |<1$ (since $|f(t_n)| \to 0$).
Unless $O(v)=\bar v$ only one of the closures of sets $\{ \eta . v
| \, 0< |\eta |<1 \} $ or $\{ \eta . v | \, |\eta |>1 \} $ in $V$
is compact and contains the fixed point $\bar v$ (indeed,
otherwise the closure of $O(v)$ is a complete curve in the affine
variety $V$). The argument before shows that it is the first one.

That is, $\mu . v \to \bar v$ when $\mu \to 0 $. Similarly,
$\lambda . w \to \bar v$ when $\lambda \to \infty$. It is not difficult to check now
that the dimension of the set of such pairs $(w,v)$ is at most $\dim V$.

Consider the set $(\bar T \setminus T) \cap \{ a_1=b_2=0 \}$. It consists of points
 $t=(0,a_2,b_1,0,w,v)$ and, therefore, its dimension, is at most $\dim V+2$.
 Thus it has codimension at least 2 in $\bar T$ whose dimension is $\dim V+4$.
This yields the desired conclusion.

\end{proof}

The next technical fact may be somewhere in the literature, but
unfortunately we did not find a reference. 

\begin{proposition}\label{fabrizio.26} Let a reductive group $G$ act on an affine algebraic
variety $X$ and $ \pi : X \to Q:=X//G$ be the quotient morphism such that
one of closed $G$-orbits $O$ is contained in the smooth part of $X$. Suppose that $\nu : X_N \to X$
and $\mu : Q_N \to Q$ are normalization morphisms, i.e. $\pi \circ \nu = \pi_N \circ \mu$ for some
morphism $\pi_N : X_N \to Q_N$. Then $Q_N \simeq X_N//G$ for the induced $G$-action on $X_N$ and
$\pi_N$ is the quotient morphism.

\end{proposition}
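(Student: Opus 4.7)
The plan is to combine the functoriality of normalization with the hypothesis on $O$ in order to identify $\C[X_N]^G$ with $\C[Q_N]$ inside $\C[X_N]$, so that the categorical quotient morphism of the induced $G$-action on $X_N$ is exactly $\pi_N$.

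First I would lift the $G$-action to $X_N$: the normalization $\nu$ is finite, and each automorphism of $X$ lifts uniquely to an automorphism of the normal variety $X_N$, producing a regular $G$-action on $X_N$ for which $\nu$ is $G$-equivariant. Since $G$ acts trivially on $Q$, the induced action on $Q_N$ is trivial, and the relation $\mu\pi_N=\pi\nu$ forces $\pi_N$ to be $G$-invariant. Hence $\pi_N^{*}\C[Q_N]\subseteq \C[X_N]^G$; equivalently the universal property of the categorical quotient provides a morphism $\alpha:X_N//G\to Q_N$ whose composition with the quotient map $X_N\to X_N//G$ is $\pi_N$. Note that $X_N//G$ is normal by Proposition~\ref{redu}(1), and that $\C[X_N]^G$ is integral over $\C[X]^G=\C[Q]$ because $\C[X_N]$ is integral over $\C[X]$ and integrality descends to invariants.

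Next I would exhibit a nonempty open $W\subseteq Q$ over which $\alpha$ is an isomorphism. Let $U$ denote the smooth locus of $X$. It is $G$-invariant, so $X\setminus U$ is closed and $G$-stable, and Proposition~\ref{redu}(4) shows that $\pi(X\setminus U)$ is closed in $Q$. I claim $\pi(O)\notin \pi(X\setminus U)$: otherwise some $G$-orbit contained in $X\setminus U$ would lie in the fiber $\pi^{-1}(\pi(O))$, and its closure, being inside the closed invariant set $X\setminus U$, would have to contain the unique closed orbit of that fiber, namely $O$, contradicting $O\subseteq U$. Set $W:=Q\setminus \pi(X\setminus U)$; then $\pi^{-1}(W)\subseteq U$ is smooth, hence normal, and $\nu$ restricts to an isomorphism $\nu^{-1}(\pi^{-1}(W))\xrightarrow{\sim}\pi^{-1}(W)$. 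Taking $G$-invariants of this ring isomorphism shows that $\C[Q]\hookrightarrow \C[X_N]^G$ becomes an isomorphism after inverting the elements of $\C[Q]$ nonzero on $W$; in particular $\C[X_N]^G$ is contained in the field of fractions of $\C[Q]$.

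To conclude, $\C[X_N]^G$ is a normal ring, integral over $\C[Q]$, and contained in the fraction field of $\C[Q]$, hence contained in the integral closure of $\C[Q]$ in that field, which is $\C[Q_N]$. Combined with the reverse inclusion from the first paragraph, this gives $\C[Q_N]=\C[X_N]^G$, identifying $Q_N$ with $X_N//G$ and $\pi_N$ with the quotient morphism. The main obstacle is the middle step: producing the dense open $W$ where $\nu$ is an isomorphism relies crucially on both the assumption that $O$ lies in the smooth locus and on Proposition~\ref{redu}(4) on closedness of invariant images --- without these the inclusion $\C[Q]\hookrightarrow \C[X_N]^G$ could fail to be birational and the argument would collapse.
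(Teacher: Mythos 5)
Your proof is correct, but it takes a genuinely different route from the paper's. You argue entirely on coordinate rings: after checking $\pi_N^*\C[Q_N]\subseteq\C[X_N]^G$, you show that $\C[X_N]^G$ is integral over $\C[Q]$, normal (Proposition~\ref{redu}(1)), and --- this is where the hypothesis on $O$ enters --- contained in $\mathrm{Frac}\,\C[Q]$, so it must equal the integral closure $\C[Q_N]$. The paper instead argues geometrically: it factors $\pi_N$ as $\varphi\circ\psi$ with $\psi\colon X_N\to X_N//G$ the quotient, shows $\varphi$ is quasi-finite (finitely many closed orbits of $X_N$ over each closed orbit of $X$) and injective near $\psi(O_N)$, hence birational, invokes the Zariski Main Theorem to get an open embedding, and then establishes properness of $\varphi$ by restricting to the Kempf--Ness set $X^{\R}$. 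Your version buys a purely algebraic argument that dispenses with both ZMT and the Kempf--Ness machinery; the role of properness is played by the finiteness of $\C[X_N]^G$ over $\C[X]^G$, which you invoke as ``integrality descends to invariants'' --- note that this step is not formal and does use reductivity (apply the Reynolds operator to an integral dependence equation). Both proofs use the hypothesis on $O$ in the same way, namely to force birationality via the uniqueness of the closed orbit in its fiber. Two small presentational points: the localization ``at elements of $\C[Q]$ nonzero on $W$'' is cleaner if you fix one $h\in\C[Q]$ vanishing on $Q\setminus W$ with $h\not\equiv 0$ and use $(\C[X]_h)^G=(\C[X]^G)_h$ over the smooth affine set $\pi^{-1}(Q_h)$; and the $G$-invariance of $\pi_N$ for possibly disconnected $G$ is most safely seen by noting that for $f\in\C[Q_N]\subseteq\C(Q)$ the pullback $\pi_N^*f$ lies in $\C(X)^G\cap\C[X_N]$.
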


\begin{proof} Let $\psi :  X_N \to R$ be the quotient morphism.
By the universal property of quotient morphims
$\pi_N = \varphi \circ \psi$ where $\varphi : R \to Q_N$ is a morphism. It suffices to show
that $\varphi$ is an isomorphism.
The points of $Q$ (resp. $R$) are nothing but the closed $G$-orbits in $X$ (resp. $X_N$) by
Proposition \ref{redu}, and above each closed orbit in $X$ we have only a finite number of closed
orbits in $X_N$ because $\nu$ is finite. Hence $\mu \circ \varphi : R \to Q$ and,
therefore, $\varphi : R \to Q_N$ are at least quasi-finite.
There is only one closed orbit $O_N$ in $X_N$ above orbit $O\subset {\rm reg} \, X$. Thus $\varphi$ is injective
in a neighborhood of $\psi (O_N)$. That is, $\varphi$ is
birational and by the Zariski Main theorem it is an embedding.

It remains to show that
$\varphi$ is proper.  Recall that $G$ is a complexification of a its compact subgroup $G^{\R}$
and there is a so-called Kempf-Ness real algebraic subvariety $X^{\R}$ of $X$
such that the restriction $\pi |_{X^{\R}}$ is nothing but the standard quotient map $X^{\R}
\to X^{\R}/G^{R}=Q$ which is automatically proper (e.g., see \cite{Sch}). Set $X_N^{\R}= \nu^{-1} (
X^\R )$. Then the restriction of $\pi \circ \nu$ to $X_N^\R$ is proper being the composition
of two proper maps. On the other hand the restriction of
$\mu \circ \pi_N = \pi \circ \nu$ to $X_N^\R$ is proper
only when morphism $\varphi$, through which it factors, is proper which concludes the proof.

\end{proof}

\begin{proposition}
\label{finite} Morphism $\rho^{\prime}: Y^{\prime}\to Z^{\prime}$ from
diagram \ref{YZ} is finite birational.
\end{proposition}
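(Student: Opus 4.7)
The plan is to exhibit $Y'$ as the normalization of $Z'$, with $\rho'$ playing the role of the normalization map. The idea is to build a morphism in the reverse direction, from $Z'_N$ to $Y'$, by gluing the partial extensions of $\kappa_N$ supplied by Lemma~\ref{codim2} and Remark~\ref{fabrizio.2}, descending through the $\T$-quotient, and identifying $\bar T_N//\T$ with $Z'_N$ via Proposition~\ref{fabrizio.26}.

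First I would glue the extensions. Let $\nu:\bar T_N \to \bar T$ denote the normalization. Since $\bar T_N^a$ and $\bar T_N^b$ are normalizations of open subsets of $\bar T$, they may be identified with $\nu^{-1}(\bar T^a)$ and $\nu^{-1}(\bar T^b)$, and the two morphisms $\bar\kappa_N^a$ and $\bar\kappa_N^b$ agree with $\kappa_N$ on the common dense open $T_N$. They therefore patch into a single morphism defined on $\nu^{-1}(\bar T^a\cup \bar T^b)$, whose complement has codimension at least two in $\bar T_N$ by Lemma~\ref{fabrizio.3}. Because $Z\simeq SL_2\times V$ is smooth and $\bar T_N$ is normal, Hartogs' theorem yields an extension $\bar\kappa_N:\bar T_N\to Z$.

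Next I would check $\T$-equivariance and descend. Remark~\ref{fabrizio.1}, together with the explicit identity $(\lambda,\mu).(\lambda',1)z_0 = (\lambda\lambda'/\mu,1)(\mu,\mu)z_0$, shows that $\kappa$ (and hence $\bar\kappa_N$ by density) is $\T$-equivariant with respect to the homomorphism $\T\to\C^*_{\mathrm{diag}}\subset\T$, $(\lambda,\mu)\mapsto(\mu,\mu)$. Postcomposing with the quotient $p\circ\rho^{-1}:Z\to Z/\C^*_{\mathrm{diag}}=Y'$ produces a $\T$-invariant morphism $\bar T_N \to Y'$ that descends to $\bar T_N//\T \to Y'$. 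Applying Proposition~\ref{fabrizio.26} to the $\T$-action on $\bar T$ identifies $\bar T_N//\T$ with the normalization $Z'_N$ of $Z'$, producing a morphism $\gamma:Z'_N \to Y'$.

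Finally I would verify that $\gamma$ inverts $\rho'$. Since $Y'$ is normal (as $Y=SL_2\times V$ is smooth), $\rho'$ lifts uniquely to $\tilde\rho':Y'\to Z'_N$. The two compositions $\gamma\circ\tilde\rho'$ and $\tilde\rho'\circ\gamma$ reduce to the identity on the dense open corresponding to smooth points of $Z$, where $\kappa$ restricted to $Z\hookrightarrow T$ is tautologically the identity, and so by normality they are identities everywhere. This identifies $\rho'$ with the normalization map $Z'_N\to Z'$, which is finite birational. The main obstacle I anticipate is verifying the hypothesis of Proposition~\ref{fabrizio.26} for the $\T$-action on $\bar T$: one must locate a closed $\T$-orbit inside the smooth locus of $\bar T$, which requires a local argument showing that a generic $\T$-orbit through a point of $Z$ remains closed in $\bar T$ and avoids any singularities of $T$ arising from collisions between $\T$-orbits of distinct $\C^*_{\mathrm{diag}}$-orbits of $Z$.
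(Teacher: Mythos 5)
Your overall strategy is the paper's: use the maps $\bar\kappa_N^a,\bar\kappa_N^b$ of Lemma \ref{codim2} and Remark \ref{fabrizio.2}, compose with $p:Z\simeq Y\to Y'$, descend through the $\T$-quotient using Proposition \ref{fabrizio.26}, use Lemma \ref{fabrizio.3} plus normality to handle the codimension-two bad locus, and check on a dense open set that the resulting map inverts $\rho'_N$. However, there is a concrete error in your first step. You claim that $\bar\kappa_N^a$ and $\bar\kappa_N^b$ ``agree with $\kappa_N$ on the common dense open $T_N$'' and hence glue to a single morphism $\bar T_N\supset\nu^{-1}(\bar T^a\cup\bar T^b)\to Z$. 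They do not agree there. Writing $t=(\lambda,1).z_0=(1,\lambda^{-1}).z_1$ with $z_0,z_1\in Z$ and $z_1=(\lambda,\lambda).z_0$, the construction of Remark \ref{fabrizio.1} gives $\kappa^b(t)=z_0$, while the variant of Remark \ref{fabrizio.2} (built from $1\times\C^*$ and $f^{-1}$) gives $\kappa^a(t)=z_1$. These differ by the diagonal $\C^*$-action, so there is no glued $Z$-valued morphism $\bar\kappa_N$ on all of $\bar T_N$, and the subsequent Hartogs extension into $Z$ does not get off the ground as written.

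The repair is short and brings you back to the paper's argument: postcompose with $p$ \emph{before} gluing. Since $p$ collapses diagonal $\C^*$-orbits, $p\circ\bar\kappa_N^a$ and $p\circ\bar\kappa_N^b$ do agree on the overlap, and each is $\T$-invariant (your equivariance computation is correct), so each descends — via Proposition \ref{fabrizio.26} applied to $\bar T^a$ and $\bar T^b$ separately — to morphisms $\tau^a:Z'_N(a)\to Y'$ and $\tau^b:Z'_N(b)\to Y'$ inverting $\rho'_N$ over those open sets; these agree automatically where both are defined (both invert the same birational map), and the complement of $Z'_N(a)\cup Z'_N(b)$ has codimension two by Lemma \ref{fabrizio.3}, so the inverse extends by normality. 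This is exactly how the paper proceeds, and it never needs a globally defined $Z$-valued $\bar\kappa_N$. Your closing remark about verifying the closed-orbit-in-the-smooth-locus hypothesis of Proposition \ref{fabrizio.26} is a fair concern, but it applies equally to $\bar T^a$ and $\bar T^b$ and is settled by noting that a point of $Z$ with all of $a_1,a_2,b_1,b_2$ nonzero and $v$ with trivial isotropy lies on a closed two-dimensional $\T$-orbit in the smooth part of $T$.
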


\begin{proof} Morphism $\rho'$ factors through $\rho_N' : Y' \to Z_N'$ where $\mu : Z_N' \to Z'$ is a normalization
of $Z'$ and the statement of the proposition is equivalent to the fact that $\rho_N'$ is an isomorphism.
Set $Z'(b)= q(\bar T^b)$ and $Z'(a)=q(\bar T^a)$. Note that $Z'\setminus (Z'(a) \cup Z'(b))$ is in the $q$-image of the $\T$-invariant set $\bar T^0$ from Lemma \ref{fabrizio.3}. Hence  $Z'\setminus (Z'(b) \cup Z'(a))$ is of codimension 2 in $Z'$ and  by the Hartogs' theorem it suffices to prove that $\rho_N'$ is invertible over $Z(b)'$ (resp. $Z'(a))$.

By Remark \ref{fabrizio.1} $\bar \kappa_N^b$ sends each
orbit of the induced $\T$-action on $\bar T_N^b$ onto a
$\C^*$-orbit in $Z$. Thus the composition of $\bar \kappa_N^b$ with $p : Z
\simeq Y \to Y'$ is constant on $\T$-orbits and by the universal
property of quotient spaces it must factor through the quotient
morphism $q_N^b : \bar T_N^b \to Q$. By Proposition \ref{fabrizio.26}
$Q= Z_N'(b)$ where $Z_N'(b)=\mu^{-1}(Z'(b))$.
That is, $p \circ \bar \kappa_N^b = \tau^b
\circ q_{N}^b$ where $\tau^b : Z_N'(b) \to Y'$.
Our construction implies that $\tau^b$ is the inverse of
$\rho_{N}'$ over $Z_N'(b)$. Hence $\rho_{N}'$ is invertible over $Z_N'(b)$
which concludes the proof.

\end{proof}

\subsection{Proof of Theorems \ref{goal} and \ref{MAIN}.}

Let $G=SL_2$ act algebraically on $X$ as in Theorem \ref{goal} and $V$ be
the slice of this action at point $x \in X$ so that there is an \'etale
morphism $G\times_{G_x} V \to U$ as in Theorem \ref{Luna}. By Lemmas \ref
{local} and \ref{product} for validity of Theorem \ref{goal} it suffices to
prove semi-compatibility of vector fields $\delta_1$ and $\delta_2$ on ${%
\ensuremath{\mathcal{Y}}} =G \times_{G_x} V$, which was already done in the
case of a finite isotropy group $G_x$ (see Lemma \ref{product} (2)).
Consider the quotient morphisms $\varrho_i : {\ensuremath{\mathcal{Y}}} \to {%
\ensuremath{\mathcal{Y}}}_i : ={\ensuremath{\mathcal{Y}}} //H_i$ where $H_i,
\, i=1,2$ are as in Example \ref{example.01}. Set $\varrho = (\varrho_1,
\varrho_2) : {\ensuremath{\mathcal{Y}}} \to \overline {\varrho ({%
\ensuremath{\mathcal{Y}}} )} \subset {\ensuremath{\mathcal{Y}}}_1 \times {%
\ensuremath{\mathcal{Y}}}_2$. By Proposition \ref{reformulation} Theorem \ref
{goal} is true if $\varrho$ is finite birational. If $G_x=%
\ensuremath{\mathbb{C}}^*$ then $\varrho : {\ensuremath{\mathcal{Y}}} \to {%
\ensuremath{\mathcal{Y}}}_1 \times {\ensuremath{\mathcal{Y}}}_2$ is nothing
but morphism $\rho^{\prime}: Y^{\prime}\to Y_1^{\prime}\times Y_2^{\prime}$
from Proposition \ref{finite}, i.e. we are done in this case as well. By
Lemma \ref{isosl2} the only remaining case is when $G_x$ is an extension of $%
\ensuremath{\mathbb{C}}^*$ by $\ensuremath{\mathbb{Z}}_2$. Then one has a $%
\ensuremath{\mathbb{Z}}_2$-action on $Y^{\prime}$ such that it is
commutative with $H_i$-actions on $Y^{\prime}$ and ${\ensuremath{\mathcal{Y}}%
} =Y^{\prime}//\ensuremath{\mathbb{Z}}_2$. Since vector fields $\delta_1$
and $\delta_2$ are semi-compatible on $Y^{\prime}$ by Propositions \ref
{finite} and \ref{reformulation}, they generate also semi-compatible vector
fields on ${\ensuremath{\mathcal{Y}}}$ by Lemma \ref{finereformulation}.
This concludes Theorem \ref{goal} and, therefore, Theorem \ref{MAIN}.
\hspace{4in} $\square$

\begin{remark}
\label{dynk} (1) Consider ${\ensuremath{\mathcal{Y}}} =G \times_{G_x} V$ in
the case when $G=G_x =SL_2$, i.e. the $SL_2$-action has a fixed point. It is
not difficult to show that morphism $\varrho = (\varrho_1, \varrho_2) : {%
\ensuremath{\mathcal{Y}}} \to \overline {\varrho ({\ensuremath{\mathcal{Y}}}
)} \subset {\ensuremath{\mathcal{Y}}}_1 \times {\ensuremath{\mathcal{Y}}}_2$
as in the proof before is not quasi-finite. In particular, $\delta_1$ and $%
\delta_2 $ are not compatible. However, we do not know if the condition
about the absence of fixed points is essential for Theorem \ref{MAIN}. In
examples we know the presence of fixed points is not an obstacle for the
algebraic density property. Say, for $\ensuremath{\mathbb{C}}^n$ with $2\leq
n \leq 4$ any algebraic $SL_2$-action is a representation in a suitable
polynomial coordinate system (see, \cite{Po}) and, therefore, has a fixed
point; but the validity of the algebraic density property is a consequence
the Anders\' en-Lempert work.

(2) The simplest case of a degenerate $SL_2$-action is presented by the
homogeneous space $SL_2/\ensuremath{\mathbb{C}}^*$ where $%
\ensuremath{\mathbb{C}}^*$ is the diagonal subgroup. Let
\begin{equation*}
A = \left(
\begin{array}{ccc}
a_1 & a_2 &  \\
b_1 & b_2 &
\end{array}
\right)
\end{equation*}
be a general element of $SL_2$.
Then the ring of invariants of the $\ensuremath{\mathbb{C}}^*$-action is
generated by $u=a_1a_2$, $v=b_1b_2$ , and $z=a_2b_1+1/2$ (since $%
a_1b_2=1+a_2b_1=1/2+z$). Hence $SL_2/\ensuremath{\mathbb{C}}^*$ is
isomorphic to a hypersurface $S$ in $\ensuremath{\mathbb{C}}^3_{u,v,z}$
given by the equation $uv=z^2-1/4 $. In particular, it has the algebraic
density property by \cite{KK1}.

(3) However, the situation is more complicated if we consider the normalizer
$T$ of the diagonal $\ensuremath{\mathbb{C}}^*$-subgroup of $SL_2$ (i.e. $T$
is an extension of $\ensuremath{\mathbb{C}}^*$ by $\ensuremath{\mathbb{Z}}_2$%
). Then ${\ensuremath{\mathcal{P}}} =SL_2/T$ is isomorphic to $S/%
\ensuremath{\mathbb{Z}}_2$ where the $\ensuremath{\mathbb{Z}}_2$-action is
given by $(u,v,z) \to (-u,-v,-z)$. It can be shown that this surface ${%
\ensuremath{\mathcal{P}}}$ is the only $\ensuremath{\mathbb{Q}}$-homology
plane which is simultaneously a Danilov-Gizatullin surface (i.e. it has a
trivial Makar-Limanov invariant (see \cite{FKZ})), and its fundamental group
is $\ensuremath{\mathbb{Z}}_2$. We doubt that ${\ensuremath{\mathcal{P}}}$
has algebraic density property.
\end{remark}

\section{Applications}

Theorem \ref{MAIN} is applicable to a wide class of homogeneous spaces.
%
Let us start with the following observation: given a reductive
subgroup $R$ of a linear algebraic group $G$ any $SL_2$-subgroup
$\Gamma <G$ yields a natural $\Gamma$-action on $G /R$.
Furthermore, for each point $aR \in G/R$ its isotropy subgroup
under this action is isomorphic to $\Gamma  \cap a
Ra^{-1}$. In particular, the action has no fixed points if
 $a^{-1}\Gamma a$ is not contained in $R$ for any $a
\in G$ and it is non-degenerate if $\Gamma^a:=a^{-1}\Gamma a \cap R \simeq
\Gamma \cap a R a^{-1}$ is finite for some
$a \in R$. Thus Theorem \ref {MAIN} implies the following.

\begin{proposition}
\label{prop.10} Let $G$ be an algebraic group and $\Gamma_1,
\ldots , \Gamma_k$ be its $SL_2$-subgroups such that at some $x
\in G$ the set $\{ \delta_2^i (x) \}$ is a generating one (where
$(\delta_1^i, \delta_2^i)$ is the corresponding pair of locally
nilpotent vector fields on $G$ generated by the natural
$\Gamma_i$-action). Suppose that for each $i=1, \ldots ,k$ and any
$a \in G$ the group $\Gamma_i^a:=a^{-1}\Gamma_ia$ is not isomorphic to $ \Gamma_i$,
and furthermore $\Gamma_i^a$ is finite for some $a$. Then $G / R$
has the algebraic density property.
\end{proposition}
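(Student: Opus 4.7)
The plan is to verify the hypotheses of Theorem~\ref{MAIN} for $X = G/R$, equipped with the $SL_2$-actions $\Gamma_i$ obtained by left multiplication on $G$ pushed through the quotient map $\pi : G \to G/R$. Since $R$ is reductive, Matsushima's criterion (Proposition~\ref{matsu}) gives that $G/R$ is affine; as a homogeneous space it is automatically smooth, and left multiplication embeds $G$ into $\Aut_{alg}(G/R)$, yielding transitivity of the algebraic automorphism group.

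Fixed point freeness and non-degeneracy of each induced $\Gamma_i$-action follow directly from the assumptions. The stabilizer of a coset $aR$ equals $\Gamma_i\cap aRa^{-1}$, which conjugation by $a^{-1}$ identifies with $\Gamma_i^a=a^{-1}\Gamma_i a\cap R$. The first hypothesis rules out the equality $\Gamma_i^a\simeq\Gamma_i$ that a fixed point would force; the finiteness of $\Gamma_i^a$ for some $a$ produces a point with finite stabilizer, and by upper semicontinuity of stabilizer dimension the generic stabilizer is then also finite, giving non-degeneracy.

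The remaining, most delicate condition is the generating set property of Definition~\ref{tangential} at some point of $G/R$. The natural candidate is $y=\pi(x)$ for the point $x\in G$ given by the hypothesis. Since $\pi$ is $\Gamma_i$-equivariant, $\delta_2^i(y)=d\pi_x(\delta_2^i(x))$, and $d\pi_x:T_xG\to T_y(G/R)$ is surjective. The isotropy of $y$ in $\Aut_{alg}(G/R)$ contains in particular the left multiplication action of $xRx^{-1}$, which acts on $T_y(G/R)$ by an adjoint-type representation of $R$; combined with the hypothesis that the isotropy of $x$ in $\Aut_{alg}(G)$ spreads $\{\delta_2^i(x)\}$ to generate all of $T_xG$, this is what should produce the generating property at $y$. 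The main obstacle is precisely this descent step: automorphisms of $G$ in the isotropy of $x$ do not in general descend to automorphisms of $G/R$ fixing $y$, so one has to argue that the isotropy on $G/R$ coming from $xRx^{-1}$ together with the $R$-equivariant automorphisms of $G$ in the isotropy of $x$ is large enough to compensate. Once this step is handled, Theorem~\ref{MAIN} immediately yields that $G/R$ has the algebraic density property.
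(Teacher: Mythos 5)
Your route is the paper's own: the authors give no argument for this proposition beyond the observation immediately preceding it (the isotropy of $aR$ under the $\Gamma_i$-action is $\Gamma_i\cap aRa^{-1}\simeq\Gamma_i^a$, so the two stated hypotheses are exactly fixed-point-freeness and non-degeneracy of each induced action on $G/R$) together with the words ``Thus Theorem \ref{MAIN} implies the following.'' Your verification of those two conditions, and of smoothness, affineness and homogeneity of $G/R$, reproduces that observation. Two small points: the statement's $\Gamma_i^a:=a^{-1}\Gamma_ia$ must be read as $a^{-1}\Gamma_ia\cap R$, as in that paragraph, or the first hypothesis is never satisfiable (you implicitly made this correction); and Proposition \ref{matsu} as stated assumes $G$ reductive, so for a general linear algebraic $G$ the affineness of $G/R$ is better obtained from the Levi decomposition as in the proof of Theorem \ref{thm.50}.

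The step you single out as the main obstacle --- transporting the generating-set condition from $x\in G$ to $\pi(x)\in G/R$ --- is precisely the step the paper leaves implicit, and your worry that isotropy automorphisms of $G$ do not descend to $G/R$ is legitimate. The intended repair is not to descend the automorphisms but to conjugate the subgroups. After a right translation one may take $x=e$ (right translations commute with left multiplication and hence preserve each $\delta_2^i$), and the hypothesis should be read, as in all of the paper's applications (Corollary \ref{cor.40}), as saying that the nilpotent elements $\mathrm{Ad}(b)\delta_2^i$, $b\in G$, span the Lie algebra of $G$. Since $(b\Gamma_ib^{-1})^a=\Gamma_i^{b^{-1}a}$, every conjugate $b\Gamma_ib^{-1}$ again satisfies both hypotheses, hence acts on $G/R$ fixed point freely and non-degenerately, so Theorem \ref{goal} makes each resulting pair of locally nilpotent fields compatible on $G/R$; the left-multiplication field of $\mathrm{Ad}(b)\delta_2^i$ does descend to $G/R$ and evaluates at $eR$ to $d\pi_e\bigl(\mathrm{Ad}(b)\delta_2^i\bigr)$. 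Because $d\pi_e$ is surjective, finitely many conjugates already give compatible pairs whose $\delta_2$-values span $T_{eR}(G/R)$, and Theorem \ref{density} applies directly. With the literal reading of Definition \ref{tangential} (the full isotropy of $x$ in $\Aut_{alg}(G)$) the descent could genuinely fail; with the $\mathrm{Ad}$-generation reading, which is the one the paper uses, your outline closes.
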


Note that for a simple Lie group $G$ a generating set at any $x \in G$
consists of one nonzero vector since the adjoint representation is
irreducible. Therefore, in this case the algebraic density property is a
consequence of the following.

\begin{theorem}
\label{dynkin} Let $G$ be a simple Lie group with Lie algebra
different from $\sgoth \lgoth_2$ and $R$ be its proper reductive
subgroup. Then there exists an $SL_2$-subgroup $\Gamma$ in $G$
such that $\Gamma^a $ is not isomorphic to $ \Gamma$ for any $a \in G$, and, furthermore, $%
\Gamma^a$ is finite for some $a \in G$.
\end{theorem}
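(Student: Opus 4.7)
The approach is to translate both assertions into statements in $\ggoth = \Lie G$ and reduce everything to the Lie-algebra theory of nilpotent orbits. Write $\gamma=\Lie\Gamma$ and $\rgoth = \Lie R$, and fix a nonzero nilpotent element $e \in \gamma$. The condition that $\Gamma^a = a^{-1}\Gamma a \cap R$ is never isomorphic to $\Gamma$ is equivalent to the requirement that no $G$-conjugate of the three-dimensional subalgebra $\gamma$ is contained in $\rgoth$. Applying Jacobson--Morozov in both $\ggoth$ and $\rgoth$ (together with Kostant's uniqueness of $\sgoth\lgoth_2$-triples through a fixed nilpotent up to centralizer conjugacy), such a conjugate exists if and only if the $G$-orbit $\cO_e \subset \ggoth$ meets $\rgoth$; so condition (i) becomes $\cO_e \cap \rgoth = \emptyset$. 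The second condition, that $\Gamma^a$ is finite for some $a \in G$, is precisely $\operatorname{Ad}(a)\gamma \cap \rgoth = 0$ for some $a$.

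For the transversality condition, a case-by-case inspection of Dynkin's classification of maximal proper reductive subalgebras of a simple Lie algebra shows that every proper reductive $\rgoth \subsetneq \ggoth$ (with $\ggoth$ simple of rank at least two) has codimension at least three in $\ggoth$ (indeed at least four in every case). A standard Kleiman-style transversality argument for the adjoint action of $G$ on $\ggoth$ then gives $\operatorname{Ad}(a)\gamma \cap \rgoth = 0$ for a Zariski-generic $a \in G$, no matter which three-dimensional $\gamma$ is chosen. Hence condition (ii) is automatic once $\Gamma$ has been constructed to satisfy (i), and the proof reduces to exhibiting a nilpotent $G$-orbit $\cO_e \subset \ggoth$ disjoint from $\rgoth$.

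By Bala--Carter $\rgoth$ has only finitely many nilpotent $R$-orbits, whose $G$-saturations form a finite subset of the (also finite) set of $G$-orbits in $\operatorname{Nilp}(\ggoth)$. The plan is to verify, case by case along the Dynkin classification of reductive subalgebras, that this subset is never the full set $\operatorname{Nilp}(\ggoth)$: when $\operatorname{rank}(R) < \operatorname{rank}(G)$ one typically takes a regular or subregular $\sgoth\lgoth_2$ of $\ggoth$ (selecting between them according to whether $\rgoth$ contains a regular nilpotent of $\ggoth$, as must be done e.g.\ for $\sgoth\ogoth_3 \subset \sgoth\lgoth_3$); when $\operatorname{rank}(R) = \operatorname{rank}(G)$ so that $R$ is a Borel--de Siebenthal-type subgroup, one compares the explicit Bala--Carter labels of $\rgoth$- and $\ggoth$-nilpotent orbits to pinpoint a missing class. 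The main obstacle is precisely this uniformity in the equal-rank case: dimensional bounds alone cannot exclude the equality $\operatorname{Nilp}(\ggoth) = G \cdot \rgoth^{\mathrm{nil}}$, so one has to appeal to the fine classification of $\sgoth\lgoth_2$-subalgebras in simple Lie algebras. This combinatorial Lie-theoretic verification is the technical input deferred to the Appendix.
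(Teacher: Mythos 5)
Your reduction of the first condition (no conjugate of $\Gamma$ lies in $R$) to the statement that some nilpotent $G$-orbit $\cO_e$ is disjoint from $\rgoth$, via Jacobson--Morozov and Kostant conjugacy, is exactly the paper's Proposition \ref{prop.30}, and the subsequent case-by-case verification along Dynkin's classification is the content of Proposition \ref{prin.10}; here you and the paper agree (though you defer the entire verification, which is where most of the work sits).

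The genuine gap is in your treatment of the second condition. You claim that once $\operatorname{codim}_{\ggoth}\rgoth\geq 3$, a ``standard Kleiman-style transversality argument'' gives $\operatorname{Ad}(a)\gamma\cap\rgoth=0$ for generic $a$, ``no matter which three-dimensional $\gamma$ is chosen.'' Kleiman's generic-translate theorem requires the group to act transitively on the ambient variety, and the adjoint action of $G$ on $\ggoth\setminus\{0\}$ is very far from transitive. Running the incidence-variety count orbit by orbit, what one actually needs is that for every adjoint orbit $\cO$ meeting $\gamma\setminus\{0\}$ one has $\dim(\rgoth\cap\cO)<\dim\cO-2$; since the relevant orbits are those of the semisimple elements $a$ of $\gamma$, this translates (this is the paper's Lemma \ref{dynkin.20}) into $\dim R<\dim G-\dim C_G(a)-1$, a threshold of size at least $\operatorname{rank}G+1$, not $3$, and one which depends heavily on \emph{which} $SL_2$ is chosen: for a non-principal $\Gamma$ the centralizer $C_G(a)$ can be huge and the count gives nothing. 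So the two conditions cannot be decoupled as you propose. Worse, even for the principal and subregular triples the refined dimension count is not always sufficient: for a principal $R$ the borderline equality $\dim R=\dim G-\operatorname{rank}G+\operatorname{rank}R-2$ must be excluded by the root-space analysis of Proposition \ref{dynkin.30}, and for the subregular triple the pairs $C_2\subset A_3$ (where $\operatorname{codim}\,\sgoth\pgoth_4=5\geq 3$ but $5<\operatorname{rank}+3=6$) and $B_3\subset D_4$ require the explicit partition/fixed-vector arguments of Lemma \ref{p1}. Your proposal has no mechanism to handle any of these cases, so the finiteness assertion is unproved.
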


Surprisingly enough the proof of this Theorem (at least in our presentation)
requires some serious facts from Lie group theory and we shall postpone it
till the Appendix.

\begin{corollary}
\label{cor.40} Let $X=G/R$ be an affine homogeneous space of a semi-simple
Lie group $G$. Suppose that $X$ is different from a $\ensuremath{\mathbb{Q}}$%
-homology plane with $\ensuremath{\mathbb{Z}}_2$ as a fundamental group.
Then $X$ is equipped with $N$ 
pairs $\{ \delta_1^k,\delta^k_2 \}_{k=1}^N$ of compatible derivations such
that the collection $\{\delta^k_2 (x_0) \}_{k=1}^N \subset T_{x_0}X$ is a
generating set at some point $x_0\in X$. In particular, $X$ has the
algebraic density property by Theorem \ref{density}.
\end{corollary}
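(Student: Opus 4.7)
The plan is to produce a collection of fixed-point-free non-degenerate $SL_2$-actions on $X=G/R$ whose associated locally nilpotent derivations $\delta_2^k$ have values at some $x_0\in X$ that span $T_{x_0}X$ (and are therefore a generating set in the sense of Definition \ref{tangential}); compatibility of each pair $(\delta_1^k,\delta_2^k)$ will then follow from Theorem \ref{goal}, and Theorem \ref{density} will yield the algebraic density property.

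I would decompose $G$ up to isogeny as an almost direct product $G_1\cdots G_s$ of simple factors, so that $\ggoth=\ggoth_1\oplus\cdots\oplus\ggoth_s$. For each simple factor $G_i$ not contained in $R$ and with $\ggoth_i\not\simeq\sgoth\lgoth_2$, I apply Theorem \ref{dynkin} to $G_i$ together with the proper reductive subgroup $R\cap G_i$ to obtain an $SL_2$-subgroup $\Gamma_i\subset G_i$ such that no conjugate of $\Gamma_i$ lies in $R$ and some conjugate meets $R$ in a finite subgroup; since conjugation of $\Gamma_i$ by elements of $G$ reduces to conjugation by elements of $G_i$ in an almost direct product, the observations preceding Proposition \ref{prop.10} give a fixed-point-free non-degenerate $\Gamma_i$-action on $X$.

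For simple factors $G_i$ with $\ggoth_i\simeq\sgoth\lgoth_2$ and $G_i\not\subset R$, the subgroup $R\cap G_i$ is either finite, a diagonal $\C^*$, or its normalizer. Remark \ref{dynk}(2)--(3) handles each subcase explicitly: the first two admit suitable $SL_2$-subgroups (for example, $G_i$ itself when $R\cap G_i$ is finite, or an $SL_2$-subgroup arising from the identification $SL_2/\C^*\simeq\{uv=z^2-1/4\}$), and only the normalizer subcase yields the $\Q$-homology plane $SL_2/T$ with fundamental group $\Z_2$, which is excluded by hypothesis. Enlarging this family by taking $G$-conjugates, and using both that the nilpotent cone of a semi-simple Lie algebra spans $\ggoth$ linearly and that Jacobson--Morozov produces an $\sgoth\lgoth_2$-triple (and hence an $SL_2$-subgroup of $G$) through every nilpotent element, one can arrange for $\{\delta_2^k(x_0)\}$ to span $T_{x_0}X=\ggoth/\rgoth$, which is a fortiori a generating set.

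The hard part will be ensuring that the chosen $G$-conjugates simultaneously retain the fixed-point-free and non-degenerate properties while producing a spanning collection of nilpotent projections to $\ggoth/\rgoth$. This is precisely the Lie-theoretic content of Theorem \ref{dynkin}, whose proof is postponed to the Appendix and rests on the classification of nilpotent orbits and centralizers in simple Lie groups. Once this reduction is carried out, Theorems \ref{goal} and \ref{density} immediately finish the argument.
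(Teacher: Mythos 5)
Your overall strategy---decompose $G$ into simple factors, apply Theorem \ref{dynkin} to each factor to obtain fixed point free non-degenerate $SL_2$-actions, and conclude via Theorems \ref{goal} and \ref{density}---matches the paper's, and your treatment of the factors with $\ggoth_i\not\simeq\sgoth\lgoth_2$ is essentially the paper's first case (the paper works with the projections $\pi_i(R)$, which are automatically reductive and, after a minimality argument on the number of factors, proper; your $R\cap G_i$ also works, since it is normal in $R$ and hence reductive). But there is a genuine gap in your handling of the $\sgoth\lgoth_2$-factors. Since $G_i$ is a direct factor it is normal in $G$, so every $G$-conjugate of $G_i$ is $G_i$ itself and every isotropy group of the $G_i$-action on $G/R$ is a conjugate of $G_i\cap R$; if $G_i\cap R$ contains a $\C^*$, the $G_i$-action on $G/R$ is therefore degenerate at \emph{every} point, and no $SL_2$-subgroup of $G_i$ (all of which coincide with $G_i$) can repair this. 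Remark \ref{dynk}(2) does not ``handle'' this subcase in the sense you need: it establishes the density property of the surface $SL_2/\C^*$ via the hypersurface result of \cite{KK1}, and it produces no compatible pair of derivations of the required form on $G/R$. The paper's actual fix is a diagonal trick: for $N\ge 2$ it embeds $SL_2$ into $\Gamma_1\times\Gamma_k$ as $\Gamma^{\varphi_k}=\{(\varphi_k(\gamma),\gamma)\}$, choosing the isomorphism $\varphi_k$ so that any $\C^*$-subgroup acting trivially through the $G_k$-side acts nontrivially through the $G_1$-side, which forces non-degeneracy; a second isomorphism $\psi_2:\Gamma_2\to\Gamma_1$ is then used to complete the generating set. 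This diagonal construction is the essential idea your sketch is missing, and without it the corollary is not proved whenever some simple factor is an $SL_2$ meeting $R$ in a positive-dimensional subgroup.

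A secondary problem is your spanning step. Jacobson--Morozov applied to an arbitrary nilpotent element produces an $SL_2$-subgroup with no guarantee of acting fixed point freely or non-degenerately on $G/R$ (Theorem \ref{dynkin} supplies only one good conjugacy class, principal or subregular, per factor), so you cannot run through ``every nilpotent element.'' What saves this part of the argument is that conjugates of the single good $\Gamma_i$ already suffice: fixed point freeness and non-degeneracy are conjugation-invariant, and the nilpotents attached to the conjugates sweep out a nonzero adjoint orbit, whose linear span is all of $\ggoth_i$ by irreducibility of the adjoint representation. The paper avoids even this by observing that a single nonzero nilpotent per simple factor is already a generating set in the sense of Definition \ref{tangential}, precisely because the isotropy group at $x_0$ acts irreducibly on each $\ggoth_i$.
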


\begin{proof} Note that $R$ is reductive by Proposition \ref{matsu}
(Matsushima's theorem). Then $X$ is isomorphic to a quotient of
form $G/R$ where $G=G_1 \oplus \ldots \oplus G_N$, each $G_i$ is a
simple Lie group and $R$ is not necessarily connected. However, we
can suppose that $R$ is connected by virtue of Proposition
\ref{finereformulation}. Consider the projection homomorphism
$\pi_k : G \to G_k$ and $R_k=\pi_k (R)$ which is reductive being
the image of a reductive group. If $N$ is minimal possible then
$R_k \ne G_k$ for every $k$. Indeed, if say $R_N =G_N$ then
$X=(G_1 \oplus \ldots \oplus G_{N-1})/\tilde R$ where $\tilde
R=\Ker \pi_N$ which contradicts minimality.

Assume first that none of $G_i$'s is isomorphic to $SL_2$. By
Theorem \ref{dynkin}  one can choose an $SL_2$-subgroup
$\Gamma_k <G_k$ such that the natural $\Gamma_k$-action on
$G_k/\pi_k ( R)$ and, therefore, on $G/R$ is fixed point free.
We can also assume that each $\Gamma_k$-action is non-degenerate.
Denote by $\delta_1^k$ and $\delta_2^k$ the
corresponding pair of locally nilpotent derivations for the
$\Gamma_k$-action. Since the adjoint representation is irreducible
for a simple Lie group, $\{\delta^k_2 (e) \}_{k=1}^N$ is a
generating set of the tangent space $T_eG$ at $e=e_1 \oplus \ldots
\oplus e_N \in G$, where $e_k$ is a unit of $G_k$. Consider $X=G/R$
as the set of left cosets, i.e. $X$ is the quotient of $G$ with
respect to the action generated by multiplication by elements of
$R$ from the right. Hence this action commutes with multiplication
by elements of $\Gamma_k$ from the left, and, therefore, it
commutes with any field $\delta_k^i$. Pushing the actions of
$\Gamma_k$'s to $X$ we get fixed point free non-degenerate
$SL_2$-actions on $X$ and the desired conclusion in this case
follows from  by Theorem \ref{MAIN}.

In the case when some of $G_k$'s are isomorphic to $SL_2$ we
cannot assume that  each $\Gamma_k$-action is non-degenerate, but now
$N \geq 2$ and the $\Gamma_k$-actions are still fixed point free.
Consider an isomorphism $\varphi_k : \Gamma_k \to \Gamma_1$. Then
we have an $SL_2$-group $\Gamma^{\varphi_k}= \{( \varphi_k (\gamma
), \gamma ) | \gamma \in \Gamma_k \} < \Gamma_1 \times \Gamma_k$
acting naturally on $G_1 \times G_k$ and, therefore, on $G$. This
isomorphism $\varphi_k$ can be chosen so that the
$\Gamma^{\varphi_k}$-action is non-degenerate. Indeed, if, say a
$\C^*$-subgroup $L <\Gamma_k$ acts on $G_k$ trivially choose
$\varphi_k$ so that $\varphi_k (L)$ acts nontrivially on $G_1$
which makes the action non-degenerate. In particular, by Theorem
\ref{goal} we get pairs of compatible locally nilpotent
derivations $\tilde\delta_1^{\varphi_k}$ and $\tilde
\delta_2^{\varphi_k}$ corresponding to such actions. Set $G' =G_2
\oplus \ldots \oplus G_N$ and $e' =e_2 \oplus \ldots \oplus e_N
\in G'$. Since the adjoint representation is irreducible for a
simple Lie group the orbit of the set $\{\tilde
\delta^{\varphi_k}_2 (e) \}_{k=2}^N$ under conjugations generates
a subspace of $S$ of $T_eG$ such that the restriction of the
natural projection $T_e G \to T_{e'}G'$ to $S$ is surjective. In
order to enlarge $\{\tilde \delta^{\varphi_k}_2 (e) \}_{k=2}^N$ to
a generating subset of $T_eG$ consider an isomorphism $\psi_2 :
\Gamma_2 \to \Gamma_1$ different from $\varphi_2$ and such that
the $\Gamma^{\psi_2}$-action is non-degenerate. Denote the
corresponding compatible locally nilpotent derivations by $\tilde
\delta_1^{\psi_2}$ and $\tilde \delta_2^{\psi_2}$ on $G_1 \oplus
G_2$ (and also by abusing notation on $G$). Note that the vectors
$\tilde \delta^{\varphi_2}_2 (e_1\oplus e_2)$ and $\tilde
\delta^{\psi_2}_2 (e_1\oplus e_2)$ can be assumed different with
an appropriate choice of $\psi_2$. Hence these two vectors form a
generating subset of $T_{e_1\oplus e_2} G_1 \oplus G_2$. Taking
into consideration the remark about $S$ we see that $\{\tilde
\delta^{\varphi_k}_2 (e) \}_{k=2}^N \cup \{ \tilde \delta^{\psi_2}_2
(e) \}$ is a generating subset of $T_e G$. Now pushing these
$SL_2$-actions to $X$ we get the desired conclusion.

\end{proof}

\begin{theorem}\label{thm.50}
Let $G$ be a linear algebraic group and $R$ be its proper
reductive subgroup such that the homogeneous space $G/R$ is
different from $\ensuremath{\mathbb{C}}_+$, a torus, or the
$\ensuremath{\mathbb{Q}}$ -homology plane with fundamental group
$\ensuremath{\mathbb{Z}}_2$ (i.e. the surface
${\ensuremath{\mathcal{P}}}$ from Remark \ref{dynk} (3)). Then
$G/R$ has the algebraic density property.
\end{theorem}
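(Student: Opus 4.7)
The plan is to reduce the theorem to Corollary \ref{cor.40} (the semisimple case) via the Levi decomposition and the product criterion of Proposition \ref{prod.11}. First I would reduce to $G$ connected, and then apply the Levi decomposition $G = L \ltimes U$, where $L$ is a Levi factor (reductive) and $U$ is the unipotent radical, isomorphic as a variety to $\aff^n$ with $n = \dim U$. Mostow's conjugacy theorem places any reductive subgroup of $G$ into some Levi, so after conjugation $R \subset L$. A direct computation of the right $R$-action on the semidirect product $L \ltimes U$ shows it fixes the $U$-coordinate, yielding the variety isomorphism $G/R \cong Y \times \aff^n$, where $Y := L/R$ is an affine homogeneous space of the reductive group $L$.

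Next I analyze the factor $Y$. If $L = R$ then $Y$ is a point and $G/R = \aff^n$: the cases $n = 0$ and $n = 1$ are excluded (since $R$ is proper and $G/R \neq \C_+$), and for $n \geq 2$ the algebraic density property is the classical Ander\'sen--Lempert theorem. If $Y$ has positive dimension, decompose $L = Z \cdot L_0$ with $Z = Z(L)^0$ the central torus and $L_0 = [L, L]$ semisimple. Then either (i) $Y$ is a torus $(\C^*)^k$, on which $k$ commuting semisimple vector fields from the torus structure form a generating set at the identity; (ii) $Y$ is the exceptional $\Q$-homology plane $\cP = SL_2/T$ of Remark \ref{dynk}(3), on which the $SL_2$-action supplies integrable vector fields with values spanning $T_y \cP$; or (iii) Corollary \ref{cor.40} applied to $L_0$ acting on the semisimple piece of $Y$, augmented by semisimple vector fields from the central torus $Z$ where needed, produces integrable algebraic vector fields on $Y$ whose values at some point $y_0 \in Y$ form a generating set of $T_{y_0} Y$.

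Finally I combine these via Proposition \ref{prod.11}: $\aff^n$ is homogeneous with translation-invariant locally nilpotent vector fields spanning $T_0 \aff^n$, and in each of the cases (i), (ii), (iii) the factor $Y$ admits the required integrable vector fields. Whenever $n \geq 1$ the proposition yields the algebraic density property of $Y \times \aff^n$. When $n = 0$ (so $G$ is already reductive), case (i) with $Y$ a torus and case (ii) with $Y = \cP$ are precisely the exceptions excluded by hypothesis, while case (iii) gives density directly through Corollary \ref{cor.40} and its central-torus extension. The principal obstacle will be case (iii) when $L_0$ does not act transitively on $Y$, equivalently when $Z \not\subset L_0 R$, so that $Y$ fibers nontrivially over the quotient torus $L/L_0 R$. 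Verifying that the $SL_2$-generated tangent vectors from $L_0$, augmented by the torus action of $Z$, suffice to form a generating set of $T_{y_0} Y$ will reduce to the surjectivity of the infinitesimal action map $\mathfrak{l} \to T_{y_0} Y$, which holds since $L$ acts transitively on $Y$.
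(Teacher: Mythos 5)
Your reduction via the Levi decomposition and Mostow's theorem to $G/R \cong (L/R)\times \aff^n$, followed by Proposition \ref{prod.11} when $n\geq 1$, is exactly the paper's argument for non-reductive $G$, and the semisimple case is indeed Corollary \ref{cor.40} plus Remark \ref{dynk}(2). The gap is in your case (iii) when $n=0$, i.e.\ when $G$ itself is reductive but not semisimple. There you propose to produce ``integrable algebraic vector fields on $Y$ whose values at some point form a generating set'' by combining Corollary \ref{cor.40} with semisimple fields from the central torus, and you reduce this to surjectivity of the infinitesimal action $\mathfrak{l}\to T_{y_0}Y$. But a generating set of completely integrable fields is not sufficient for the algebraic density property: Theorem \ref{density} requires finitely many \emph{compatible pairs} $\{\delta_1^k,\delta_2^k\}$ with the $\delta_2^k(x_0)$ forming a generating set, and compatibility (Definition \ref{compatible}) is a nontrivial condition on the kernels of the derivations. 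The torus itself is the cautionary example: it carries plenty of integrable fields spanning every tangent space, yet its density property is open --- which is precisely why it is excluded from the theorem. Your proposal never establishes compatibility for pairs involving the central-torus directions, and there is no ``central-torus extension'' of Corollary \ref{cor.40} in the paper to appeal to; that phrase is doing all the unproven work.

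The paper closes exactly this gap by exhibiting an explicit product structure in the reductive case: after reducing to connected $R$ and to $G\cong S\times Z$ up to a finite central quotient (handled by Lemma \ref{finereformulation}), it sets $R'=R\cap S$, $Z'=\tau(R)$, chooses a complementary subtorus $Z''$ with $Z'\cdot Z''=Z$ via a lattice-splitting argument, and obtains $G/R\cong S/R'\times Z''$ with both factors nontrivial. Then Proposition \ref{prod.11} applies with $S/R'$ in the role of $X_1$ (locally nilpotent fields from $SL_2$-subgroups) and $Z''$ in the role of $X_2$; it is this product proposition, not the generating-set condition alone, that manufactures the required compatible pairs. To repair your argument you should either reproduce this splitting or verify Definition \ref{compatible} directly for pairs consisting of a locally nilpotent field coming from $S$ and a semisimple field coming from the torus factor.
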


\begin{proof} Since all components of $G/R$ are isomorphic as varieties we can
suppose that $G$ is connected. Furthermore, by Corollary
\ref{cor.40} and Remark \ref{dynk} (2) we are done with a
semi-simple $G$.

Let us consider first the case of a reductive but not semi-simple
$G$. Then the center $Z\simeq (\C^*)^n$ of $G$ is nontrivial. Let
$S$ be the semi-simple part of $G$. Assume for the time being that
$G$ is isomorphic as group to the direct product $S\times Z$ and
consider the natural projection $\tau : G \to Z$. Set $Z'=\tau
(R)=R/R'$ where $R' =R \cap S$. Since we are going to work with
compatible vector fields we can suppose that $R$ is connected by
virtue of Lemma \ref{finereformulation}. Then $Z'$ is a subtorus
of $Z$ and also $R'$ is reductive by Proposition \ref{matsu}.
Hence $G/R=(G/R')/Z'$ and $G/R'=S/R'\times Z$. Note that there is
a subtorus $Z''$ of $Z$ such that $Z'' \simeq Z/Z'$ and $Z' \cdot
Z''=Z$. (Indeed, $Z' \simeq (\C^*)^k$ generates a sublattice $L
\simeq \Z^k$ of homomorphisms from $\C^*$ into $Z'$ of the similar
lattice $\Z^n$ of $Z\simeq (\C^*)^n$ such that the quotient
$\Z^n/L$ has no torsion, i.e. it is isomorphic to $\Z^{n-k}$.
Since any short exact sequence of free $\Z$-modules splits we have
a $\Z$-submodule $K \simeq \Z^{n-k}$ in $\Z^n$ such that $K+L
=\Z^n$. This lattice $K$ yields a desired subtorus $Z''\simeq
(\C^*)^{n-k}$.)
Hence $G/R$ is isomorphic to $\varrho^{-1} (Z'') \simeq S/R'
\times Z''$ where $\varrho : G/R' \to Z$ is the natural
projection.
Note that both factors are nontrivial since otherwise $G/R$ is
either a torus or we are in the semi-simple case again.
Thus $X$ has the algebraic density property by Proposition
\ref{prod.11} with $S/R'$ playing the role of $X_1$ and $Z''$ of
$X_2$. In particular, we have a finite set of pairs of compatible
vector fields $\{ \delta_1^k , \delta_2^k \}$ as in Theorem
\ref{density}. Furthermore, one can suppose that the fields
$\delta_1^k$ correspond to one parameter subgroups of $S$
isomorphic to $\C_+$ and $\delta_2^k$ to one parameter subgroups
of $Z$ isomorphic to $\C^*$. In the general case $G/R$ is the
factor of $X$ with respect to the natural action of a finite
(central) normal subgroup $F<G$. Since $F$ is central the fields $
\delta_1^k$, $\delta_2^k $ induce completely integrable vector
fields $\tilde \delta_1^k$, $\tilde \delta_2^k $ on $G/R$ while
$\tilde \delta_2^k (x_0)$ is a generating set for some $x_0 \in
G/R$. By Lemma \ref{finereformulation} the pairs $\{ \tilde
\delta_1^k, \tilde \delta_2^k \}$ are compatible and the density
property for $G/R$ follows again from Theorem \ref{density}.

In the case of a general linear algebraic group $G$ different from
a reductive group, $\C^n$, or a torus $(\C^*)^n$ consider the
nontrivial unipotent radical $\cR_u$ of $G$. It is automatically
an algebraic subgroup of $G$ (\cite{Ch}, p. 183). By Mostow's
theorem \cite{Mo} (see also \cite{Ch}, p. 181) $G$ contains a
(Levi) maximal closed reductive algebraic subgroup $G_0$
such that $G$ is the semi-direct product of $G_0$ and $\cR_u$,
i.e. $G$ is isomorphic as affine variety to the product $\cR_u
\times G_0$. Furthermore, any other maximal reductive subgroup is
conjugated to $G_0$.  Hence, replacing $G_0$ by its conjugate, we
can suppose that $R$ is contained in $G_0$. Therefore $G/R$ is
isomorphic as an affine algebraic variety to the $G_0/R \times
\cR_u$ and we are done now by Proposition \ref{prod.11} with
$\cR_u$ playing the role of $X_1$ and $G_0/R$ of $X_2$.
\end{proof}

\begin{remark}
(1) The algebraic density property implies, in particular, that
the Lie algebra generated by completely integrable algebraic (and,
therefore, holomorphic) vector fields is infinite-dimensional,
i.e. this is true for homogeneous spaces from Theorem
\ref{thm.50}. For Stein manifolds of dimension at least two that
are homogeneous spaces of holomorphic actions of a connected
complex Lie groups the infinite dimensionality of such algebras
was also established by Huckleberry and Isaev \cite{HI}.

(2) Note that as in \cite{KK2} we proved actually a stronger fact
for a homogeneous space $X=G/R$ from Theorem \ref{thm.50}. Namely,
it follows from the construction that the Lie algebra generated by
vector fields of form $f \sigma$, where $\sigma$ is either locally
nilpotent or semi-simple and $f \in \Ker \sigma$ for semi-simple
$\sigma$ and $\deg_{\sigma} f \leq 1$ in the locally nilpotent
case, coincides with $\AVF (X)$.
\end{remark}

\section{Appendix: The proof of Theorem \ref{dynkin}.}

Let us start with the following technical fact.

\begin{proposition}
\label{prop.30} Let $R$ be a semi-simple subgroup of a semi-simple group $G$%
. Suppose that the number of orbits of nilpotent elements in the Lie algebra
${\ensuremath{\mathfrak{r}}}$ of $R$ under the adjoint action is less than
the number of orbits of nilpotent elements in the Lie algebra of $G$ under
the adjoint action. Then $G$ contains an $SL_2$-subgroup $\Gamma$ such that $%
\Gamma^g :=g^{-1}\Gamma g \cap R$ is different from $g^{-1}\Gamma g$ for any
$g \in G$.
\end{proposition}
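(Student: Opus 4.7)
My approach would rely on the Jacobson--Morozov theorem, which gives a bijection between nilpotent adjoint orbits in a semi-simple Lie algebra and conjugacy classes of nontrivial homomorphisms $\mathfrak{sl}_2 \to$ (the Lie algebra), and hence (up to $G$-conjugacy) of $SL_2$-subgroups. The core idea is a simple counting argument: if an $SL_2$-subgroup $\Gamma < G$ were $G$-conjugate to a subgroup of $R$, then the nilpotent element $e$ in its $\mathfrak{sl}_2$-triple would have a $G$-conjugate lying in $\mathfrak{r}$. Thus the set of $G$-conjugacy classes of $SL_2$-subgroups that fit (up to $G$-conjugacy) inside $R$ is controlled by the nilpotent $R$-orbits in $\mathfrak{r}$.

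The plan is the following. First, I would consider the natural map from the set of nonzero nilpotent $R$-orbits in $\mathfrak{r}$ to the set of nonzero nilpotent $G$-orbits in $\mathfrak{g}$, sending the $R$-orbit of $e \in \mathfrak{r}$ to the $G$-orbit $\mathrm{Ad}(G) \cdot e \subset \mathfrak{g}$. This map is well defined since $R \subset G$. By the hypothesis $n_R < n_G$ on the numbers of nonzero nilpotent orbits, this map cannot be surjective, so there exists a nonzero nilpotent $G$-orbit $\mathcal{O} \subset \mathfrak{g}$ disjoint from $\mathfrak{r}$ (because any element of $\mathfrak{r} \cap \mathcal{O}$ would produce a nilpotent $R$-orbit mapping onto $\mathcal{O}$).

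Next, pick any $e \in \mathcal{O}$. By Jacobson--Morozov, $e$ embeds into an $\mathfrak{sl}_2$-triple $(e,h,f) \subset \mathfrak{g}$, yielding a nontrivial homomorphism $\mathfrak{sl}_2 \to \mathfrak{g}$. Integrating in $G$ produces a nontrivial homomorphism from $SL_2$ (or $PSL_2$) to $G$; let $\Gamma$ denote its image, which is an $SL_2$-subgroup of $G$ whose Lie algebra $\mathrm{Lie}(\Gamma) \subset \mathfrak{g}$ contains $e$.

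Finally, I would verify the required non-conjugacy condition. Suppose, for contradiction, that $\Gamma^g = g^{-1}\Gamma g \cap R$ coincides with $g^{-1}\Gamma g$ for some $g \in G$, i.e.\ $g^{-1}\Gamma g \subset R$. Then $\mathrm{Ad}(g^{-1})(\mathrm{Lie}(\Gamma)) \subset \mathfrak{r}$, and in particular $\mathrm{Ad}(g^{-1})(e) \in \mathfrak{r}$. But $\mathrm{Ad}(g^{-1})(e) \in \mathcal{O}$, contradicting $\mathcal{O} \cap \mathfrak{r} = \varnothing$. Hence $\Gamma^g \subsetneq g^{-1}\Gamma g$ for every $g \in G$, which is the desired conclusion. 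There is essentially no obstacle here beyond invoking Jacobson--Morozov correctly; the only mild subtlety is that the integrated homomorphism might have image isomorphic to $PSL_2$ rather than $SL_2$, but this is harmless for the argument since the paper treats both as ``$SL_2$-subgroups'' (cf.\ Example \ref{example.01}).
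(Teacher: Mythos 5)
Your proposal is correct and is essentially the paper's own argument: the paper likewise invokes the Jacobson--Morozov bijection between conjugacy classes of $\mathfrak{sl}_2$-triples and nonzero nilpotent orbits and leaves the counting step implicit, which you have simply written out in full (a nilpotent $G$-orbit missed by $\mathfrak{r}$ yields an $SL_2$-subgroup no conjugate of which lies in $R$). No discrepancies to report.
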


\begin{proof} By the Jacobson-Morozov theorem (\cite{Bo}, Chap. 8.11.2, Proposition 2
and Corollary) for any semi-simple group $G$ there is a bijection
between the set of $G$-conjugacy classes of
$\sgoth\lgoth_2$-triples and the set of $G$-conjugacy classes of
nonzero nilpotent elements from $G$ which implies the desired
conclusion.

\end{proof}

In order to exploit Proposition \ref{prop.30} we need to remind some
terminology and results from \cite{Bo}.

\begin{definition}
\label{principal} (1) Recall that a semi-simple element $h$ of a Lie algebra
is regular, if the kernel of its adjoint action is a Cartan subalgebra. An ${%
\ensuremath{\mathfrak{s}}}{\ensuremath{\mathfrak{l}}}_2$-subalgebra
of the Lie algebra
${\ensuremath{\mathfrak{g}}}$ of a
semi-simple group $G$ is called principal if in its triple of
standard
generators the semi-simple element $h$ is regular and the adjoint action of $%
h$ has even eigenvalues (see Definition 3 in \cite{Bo} Chapter 8.11.4). The
subgroup generated by this subalgebra is called a principal $SL_2$-subgroup
of $G$. As an example of such a principal subgroup one can consider an $SL_2$%
-subgroup of $SL_n$ that acts irreducibly on the natural representation
space $\ensuremath{\mathbb{C}}^n$. In general, principal ${%
\ensuremath{\mathfrak{s}}}{\ensuremath{\mathfrak{l}}}_2$-subalgebras exist
in any semi-simple Lie algebra ${\ensuremath{\mathfrak{g}}}$ (see
Proposition 8 in \cite{Bo} Chapter 8.11.4). Any two principal $SL_2$%
-subgroups are conjugated (see Proposition 6 in \cite{Bo} Chapter 8.11.3 and
Proposition 9 in \cite{Bo} Chapter 8.11.4).

(2) A connected closed subgroup $P$ of $G$ is called principal if it
contains a principal $SL_2$-subgroup\footnote{%
A definition of a principal subgroup in \cite{Bo} is different (see Exercise
18 for Chapter 9.4) but it coincides with this one in the case of a complex
Lie group (see Exercise 21c for Chapter 9.4).}. A rank of $P$ is the rank of
the maximal torus it contains. If this rank is 1 then $P$ coincides with its
principal $SL_2$-subgroup (see Exercise 21 for Chapter 9.4).

\end{definition}

\begin{proposition}
\label{prin.10} Let $R$ be a proper reductive subgroup of a simple group $G$ different
from $SL_2$ or $PSl_2$.
Then there exists an $SL_2$-subgroup $\Gamma$ of $G$ such that $%
\Gamma^g :=g^{-1}\Gamma g \cap R$ is different from $g^{-1}\Gamma g$ for any
$g \in G$.
\end{proposition}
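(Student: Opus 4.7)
My plan is to leverage Proposition \ref{prop.30}: by the Jacobson--Morozov correspondence it suffices to exhibit a nilpotent $G$-orbit in $\mathfrak{g}$ that is disjoint from $\mathfrak{r}$, for then the associated $\mathfrak{sl}_2$-triple generates an $SL_2$-subgroup $\Gamma$ no conjugate of which can lie inside $R$.

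I would first reduce to the case $R$ semi-simple. Writing $\mathfrak{r} = \mathfrak{z}(\mathfrak{r}) \oplus [\mathfrak{r}, \mathfrak{r}]$, the center consists of semi-simple elements, so every nilpotent of $\mathfrak{r}$ lies in $[\mathfrak{r},\mathfrak{r}]$, and I may replace $R$ by $[R,R]$. If the result is trivial then $R$ is a torus and any nontrivial $SL_2$-subgroup of $G$ satisfies the conclusion. Otherwise I would first test $\Gamma$ equal to a principal $SL_2$-subgroup of $G$ (Definition \ref{principal}), whose attached nilpotent is regular in $\mathfrak{g}$. For many pairs $(G, R)$ --- for instance $G = SL_n$ with $R$ a proper Levi, or $R = SO_{2n}\subset SL_{2n}$ where the partition $(2n)$ violates the orthogonal parity condition --- the regular nilpotent of $G$ cannot be realized in $\mathfrak{r}$, so $\Gamma$ already works.

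When the principal nilpotent of $G$ does lie in $\mathfrak{r}$ --- the prototypical example being $Sp_{2n}\subset SL_{2n}$ --- I would instead descend in the closure order and search for a smaller nilpotent orbit of $G$ absent from $\mathfrak{r}$. Here I would combine the classification of nilpotent orbits (partitions with parity constraints for classical $G$; Bala--Carter labels and weighted Dynkin diagrams for exceptional $G$) with the Dynkin--Borel--de~Siebenthal list of maximal reductive subalgebras of $\mathfrak{g}$; for instance the partition $(3,1)$ supplies the needed $\Gamma$ in $(SL_4, Sp_4)$.

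The main obstacle is carrying out this last step uniformly. The easy dimension bound $\dim \mathcal{N}_{\mathfrak{r}} < \dim \mathcal{N}_{\mathfrak{g}}$ for the two nilpotent cones does not by itself prevent $G \cdot (\mathfrak{r} \cap \mathcal{N}_{\mathfrak{g}})$ from covering all of $\mathcal{N}_{\mathfrak{g}}$, since $G$-saturation can raise dimension. Hence the proof descends into a finite but elaborate case-by-case verification over the Dynkin classification of maximal semi-simple subalgebras of simple Lie algebras, with the exceptional algebras $E_6$, $E_7$, $E_8$ requiring the most care because of their numerous nilpotent orbits.
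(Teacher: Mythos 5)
Your overall strategy is the right one, and it is essentially the paper's: by Jacobson--Morozov (Proposition \ref{prop.30}) it suffices to find a nilpotent $G$-orbit missing $\frak{r}$, and your first step --- test the principal $SL_2$ and observe that it already works unless the regular nilpotent orbit of $\frak{g}$ meets $\frak{r}$ --- is exactly the paper's reduction to the case where $R$ is a \emph{principal} subgroup in the sense of Definition \ref{principal}. Your reduction to semi-simple $R$ via $[R,R]$ is sound (since $SL_2$ is perfect, $g^{-1}\Gamma g\subseteq R$ forces $g^{-1}\Gamma g\subseteq [R,R]$), and your sample computations (the partition $(2n)$ is not orthogonal; $(3,1)$ is not symplectic) are correct.

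The genuine gap is that the remaining step --- the case where $R$ does contain a principal $SL_2$ --- is precisely where all the content of the proposition lives, and you leave it as an acknowledged ``finite but elaborate case-by-case verification'' without carrying it out or even producing the correct finite list. Two specific problems. First, you propose to run the cases over the Dynkin--Borel--de Siebenthal classification of \emph{maximal} reductive subalgebras; but an arbitrary proper reductive $R$ need not be maximal, and a maximal reductive sub\emph{group} need not have a maximal sub\emph{algebra} as its Lie algebra (e.g.\ Levi subgroups), so the reduction to that list needs an argument you do not supply. The key structural input you are missing is Bourbaki's classification of the principal subgroups themselves: a principal subgroup of rank $1$ \emph{is} a principal $SL_2$ (handled by taking $\Gamma$ non-principal), and the principal subgroups of rank $\geq 2$ of simple groups form a short explicit list ($B_2\subset A_3,A_4$; $G_2\subset B_3,D_4,A_6$; $B_l\subset A_{2l}$; $C_l\subset A_{2l-1}$; $B_{l-1}\subset D_l$; $F_4\subset E_6$), which makes the verification finite in a usable sense. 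Second, for those remaining cases you revert to hunting for an explicit missing orbit ``in the closure order,'' whereas the counting form of Proposition \ref{prop.30} --- compare the \emph{numbers} of nilpotent orbits, computed from the partition-type classification --- disposes of each case in one line and sidesteps the saturation issue you correctly worry about. As written, the proposal is a plan with the decisive steps deferred, so it does not yet constitute a proof.
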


\begin{proof} If $R$ is not principal it cannot contain a principal
$SL_2$-subgroup and we are done. Thus it suffices to consider the
case of principal subgroup $R$ only.

Suppose first that $R$ is of rank 1. If $R$ contains $g^{-1}
\Gamma g$ it must coincide with this subgroup by the dimension
argument. Hence it suffices to choose non-principal $\Gamma$ to
see the validity of the Proposition in this case.

Suppose now that $R$ is of rank at least 2. Then there are the
following possibilities (\cite{Bo}, Exercises 20c-e for Chapter
9.4):

(1) $R$ is of type $B_2$ and $G$ is of type $A_3$ or $A_4$;

(2) $R$ is of type $G_2$ and $G$ is of type $B_3,D_4,$ or $A_6$;

(3) $G$ is of type $A_{2l}$ with $l \geq 3$ and $R$ is of type
$B_l$;

(4) $G$ is of type $A_{2l-1}$ with $l \geq 3$ and $R$ is of type
$C_l$;

(5) $G$ is of type $D_{l}$ with $l \geq 4$ and $R$ is of type
$B_{l-1}$;

(6) $G$ is of type $E_6$ and $R$ is of type $F_4$.

In order to apply Proposition \ref{prop.30} to these cases  we
need the Dynkin classification of nilpotent orbits (with
Elkington's corrections) as described in the Bala-Carter paper
(\cite{BaCa2} page 6-7).

By this classification the number $a_n$ of such orbits in a simple
Lie algebra of type $A_n$ coincides with the number of partitions
$\lambda$ of $n+1$, i.e. $\lambda =(\lambda_1, \ldots ,\lambda_k)$
with natural $\lambda_i$ such that $|\lambda | = \lambda_1 \ldots
+ \lambda_k=n+1$.

For a simple Lie algebra of type $B_m$ the number $b_m$ of
nilpotent orbits coincides with number of partitions $\lambda$ and
$\mu$ such that $2|\lambda|+|\mu|=2m+1$ where $\mu$ is a partition
with distinct odd parts.

For a simple Lie algebra of type $C_m$ the number $c_m$ of
nilpotent orbits coincides with number of partitions $\lambda$ and
$\mu$ such that $|\lambda|+|\mu|=m$ where $\mu$ is a partition
with distinct parts.

For a simple Lie algebra of type $D_m$ the number $d_m$ of
nilpotent orbits coincides with number of partitions $\lambda$ and
$\mu$ such that $2|\lambda|+|\mu|=2m$ where $\mu$ is a partition
with distinct odd parts.

The number of nilpotent orbits of algebras of type $G_2,
F_4,E_6,E_7,E_8$ is 5,16, 21,45, and 70 respectively.

Now one has $a_4>a_3=5 >b_2=4$ which settles case (1) by
Proposition \ref{prop.30}. Then $b_3,d_4,a_6
> 5$ which settles case (2). Similarly, $a_{2l} >b_l$ for $l \geq 3$,
$a_{2l-1}>c_l$ for $l \geq 3$, $d_l >b_{l-1}$ for $l \geq 4$, and
$21>16$ which settles cases (3)-(6) and concludes the proof.

\end{proof}

\begin{remark}
In fact, the statement of the above Proposition is true for any proper maximal
subgroup $R$ of $G $. This can be deduced from Dynkin's classification
of maximal subalgebras in semisimple Lie algebras. We outline the argument
below.

Let us consider a maximal subalgebra $\frak{r}$ in $\frak{g}$, where $\frak{g%
}$ is a simple Lie algebra. If $\frak{r}$ is regular (i.e., if its
normalizer contains some Cartan subalgebra in $\frak{g}$), then
$\frak{r}$ does not contain \ any principal $\frak{sl}_{2}$-triple
\cite[Section 6.2.4]{OV}. Thus we may assume that $\frak{r}$ $\
$is non-regular.

If $\frak{g}$ is exceptional, the list of such $\frak{r}$ is given in
\cite[Theorems 6.3.4, 6.3.5]{OV}. All of them are semisimple, and we will
only consider simple subalgebras (otherwise, $\frak{r}$ once again does not
contain any principal $\frak{sl}_{2}$ 's). The list of simple maximal
non-regular subalgebras of rank $\geq 2$ in exceptional Lie algebras is
short: $B_{2}$ in $E_{8}$, $A_{2}$ in $E_{7}$ and \thinspace $A_{2}$, $G_{2}$%
, $C_{4}$, $F_{4}$ in $E_{6}$. In all these cases Proposition \ref{prop.30}
applies.

It remains to consider non-regular maximal subalgeras $\frak{r}$ of
classical Lie algebras. Any such $\frak{r}$ is simple, and an embedding of $%
\frak{r}$ in $\frak{g}$ is defined by a nontrivial linear irreducible
representation $\phi :\frak{r}\rightarrow \frak{sl}(V)$. Let $n=\dim V$ and $%
m=\left[ \frac{n}{2}\right] $. If the module $V$ is not self-dual,
$\frak{r} $ is a maximal subalgebra in $\frak{g}=A_{n-1}.$ If $V$
is self-dual and endowed with a skew-symmetric invariant form,
$\frak{r}$ is a maximal subalgebra in $\frak{g}=C_{m}$; and if $V$
is self-dual with a symmetric
invariant form, $\frak{r}$ is a maximal subalgebra in $\frak{g}=B_{m}$ or $%
D_{m}$. Denote by $o(V)$ the number of nilpotent orbits in $\frak{g}$, then
\begin{equation*}
o(V)\geq o_{n}=\left\{
\begin{array}{l}
\min (a_{n-1},b_{m})\text{, if }n\text{ is odd} \\
\min (a_{n-1},d_{m},c_{m})\text{, if }n\text{ is even}
\end{array}
\right. .
\end{equation*}

We want to check that for any irreducible $\frak{r}$ -module $V$ (except
those corresponding to the trivial embedding $\frak{r}$ $=\frak{g}$), the
number $o(\frak{r})$ of the nilpotent orbits in $\frak{r}$ is less than $%
o(V) $. In what follows, representations $\phi $ generating trivial
embeddings of $\frak{r}$ in $\frak{g}$ are excluded. For exceptional $\frak{r%
}$ $\ $of types $G_{2}$, $F_{4}$, $E_{6}$, $E_{7}$, $E_{8}$ the smallest
irreducible representation has dimension $n=7$, $26$, $27$, $56$, $248$
respectively. In all cases, the inequality $o(\frak{r})<o_{n}$ holds.

\bigskip If $\frak{r}$ is of type $A_{k}$, then either $V$ is not self-dual
and $n>k+1$ (in which case $o(\frak{r})=a_{k}<a_{n-1}=o(V)$) or $V$ is
self-dual and $n>2(k+1)$. Then \ $a_{k}<o_{n}$ $\leq o(V)$.

If $\frak{r}$ is of type $B_{k}$ ($k\geq 2$), then all irreducible $V$ are
self-dual and $n>2k+1$, hence $b_{k}<o_{n}$. If $\frak{r}$ is of type $C_{k}$
($k\geq 3$), then all irreducible $V$ are self-dual and $n>4k$, hence $%
c_{k}<o_{n}$. If $\frak{r}$ is of type $D_{k}$ ($k\geq 4$), then for any
irreducible $V$, $n>3k$ and $d_{k}<o_{n}$.

From this we conclude that Proposition \ref{prop.30} applies to any simple
non-regular $\frak{r}$ in $\frak{g}$, where $\frak{g}$ is a classical simple
Lie algebra.
\end{remark}

\begin{lemma}
\label{degen.1} Each orbit $O$ of a fixed point free degenerate $SL_2$%
-action on an affine algebraic variety $X$ is two-dimensional and closed,
and the isotropy group of any point $x \in X$ is either $\ensuremath{%
\mathbb{C}}^*$ or $\ensuremath{\mathbb{Z}}_2$-extension of $%
\ensuremath{\mathbb{C}}^*$ . 
\end{lemma}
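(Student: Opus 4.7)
The plan is to exploit the classification of closed subgroups of $SL_2$ together with Matsushima's criterion (Proposition \ref{matsu}) and Proposition \ref{redu}(2). By hypothesis ``degenerate'' means every isotropy group has positive dimension, while ``fixed point free'' excludes isotropy equal to all of $SL_2$. Consequently the stabilizer $G_x$ of each $x\in X$ has dimension $1$ or $2$, and every orbit has dimension $2$ or $1$.

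First I would rule out $1$-dimensional orbits. If $\dim G_x = 2$, then the identity component of $G_x$ is, up to conjugation, the Borel subgroup of upper triangular matrices, the unique $2$-dimensional connected algebraic subgroup of $SL_2$. Since the Borel is self-normalizing, $G_x$ equals this Borel, which is solvable and thus not reductive. By Matsushima's criterion, $SL_2/G_x$ is then not affine, so the orbit cannot be closed in the affine variety $X$; by Proposition \ref{redu}(2) its closure must contain a closed orbit of strictly smaller dimension, forcing a $0$-dimensional orbit, i.e.\ a fixed point, contradicting the hypothesis.

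Hence every orbit has dimension $2$. Closedness of each such orbit follows by the same reasoning: a non-closed $2$-dimensional orbit would contain in its closure a closed orbit of dimension at most $1$, and both dimension $0$ (fixed points, excluded) and dimension $1$ (just ruled out) are impossible. With every orbit closed, Lemma \ref{isosl2} identifies $G_x$ as finite, $\C^*$, or the normalizer of $\C^*$ in $SL_2$; the finite case would give a $3$-dimensional orbit and is therefore incompatible with what has been proved, leaving exactly the two options stated in the lemma.

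The only (mild) obstacle I anticipate is invoking the classification of $2$-dimensional closed subgroups of $SL_2$ and checking that none of them is reductive; this is essentially parallel to the analysis already carried out in the proof of Lemma \ref{isosl2}, so no real difficulty arises.
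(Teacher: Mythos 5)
Your proof is correct and uses the same essential ingredients as the paper's: Proposition \ref{redu}(2) on closed orbits in orbit closures, Matsushima's criterion (via Lemma \ref{isosl2}), and the fact that degeneracy excludes finite stabilizers. The only difference is organizational — the paper starts from closed orbits and their stabilizers and then shows every orbit is closed, whereas you first bound stabilizer dimensions and rule out Borel stabilizers explicitly — but the underlying argument is the same.
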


\begin{proof} In the case of a fixed point free $SL_2$-action the isotropy group $I_x$ of a point
from a closed orbit is either finite or $\C^*$ or $\Z_2$-extension
of $\C^*$ by Lemma \ref{isosl2}. Because the action is also
degenerate $I_x$ cannot be finite and, and therefore the closed
orbit $SL_2/I_x$ is two-dimensional. By Proposition \ref{redu} (2)
the closure of $O$ must contain a closed orbit. Since $O$ itself
is at most two-dimensional it must coincide with this closed
orbit.

\end{proof}

Next we need two lemmas with the proof of the first one being straightforward.

\begin{lemma}
\label{dynkin.10} Let $G$ be a simple Lie group of dimension $N$ and rank $n$%
, $a$ be an element of $G$ and $C(a)$ be its centralizer. Suppose that $k$
is the dimension of $C(a)$. Then the dimension of the orbit $O$ of $a$ under
conjugations is $N-k$. In particular, when $a$ is a regular element (i.e. $%
\dim C(a)=n$) we have $\dim O=N-n$ coincides with the codimension of the
centralizer of $a$.
\end{lemma}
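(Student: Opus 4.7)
The plan is to realize the orbit $O$ as a homogeneous space and then invoke the standard orbit-stabilizer dimension formula for actions of algebraic groups. First I would consider the conjugation action of $G$ on itself, namely the morphism $G\times G\to G$, $(g,x)\mapsto gxg^{-1}$. The orbit of the element $a$ under this action is precisely $O=\{gag^{-1}\mid g\in G\}$, and by the very definition of the centralizer the stabilizer of $a$ is $C(a)$. Therefore the map $G\to O$, $g\mapsto gag^{-1}$, factors through an isomorphism of $G$-varieties $G/C(a)\cong O$.

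Next I would appeal to the general dimension identity for orbits of algebraic group actions: if $H$ is a closed subgroup of a connected algebraic group $G$, then $\dim(G/H)=\dim G-\dim H$. One needs to check only that $C(a)$ is closed in $G$, which is immediate since it is the fiber over the identity of the regular morphism $G\to G$, $g\mapsto gag^{-1}a^{-1}$. Applying this identity gives $\dim O=\dim G-\dim C(a)=N-k$, which is the first assertion.

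For the regular case, by the definition of a regular semisimple (or, more generally, regular) element recalled in Definition \ref{principal}, one has $\dim C(a)=n$. Substituting $k=n$ into the formula just obtained yields $\dim O=N-n$, and this number is by definition the codimension of $C(a)$ in $G$, which is the second assertion.

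The argument is essentially routine and there is no real obstacle; the only conceptual point worth flagging is the identification of the conjugation stabilizer with the centralizer, which is a matter of unwinding the definitions. Consequently the lemma reduces entirely to the orbit-stabilizer formula applied to the conjugation action.
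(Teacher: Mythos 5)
Your argument is correct and is exactly the routine orbit--stabilizer computation the authors had in mind: the paper explicitly omits the proof, remarking only that it is ``straightforward.'' Identifying $O$ with $G/C(a)$ and applying $\dim(G/H)=\dim G-\dim H$ is the intended reasoning, so there is nothing to add.
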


\begin{lemma}
\label{dynkin.20} Let $G$ be a simple Lie group of dimension $N$ and rank $n$%
, $R$ be its proper reductive subgroup of dimension $M$ and rank $m$, $%
\Gamma $ be a $SL_2$-subgroup of $G$ such that its natural action on $G/R$
is fixed point free degenerate. Suppose that $a$ is a semi-simple
non-identical element of $\Gamma$ and $k$ is the dimension of $C(a)$. Then $%
M\geq N-k-1$. Furthermore, if $a$ is regular $M=N-n+m-2$.
\end{lemma}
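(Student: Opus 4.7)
My plan is to study the fixed point set $F=\{y\in G/R:ay=y\}$ of $a$ acting on $G/R$ in two complementary ways: first computing $\dim F$ by saturating $F$ under $\Gamma$, and then lifting $F$ to a subvariety of $G$ to relate its dimension to that of the intersection $O_a^G\cap R$, where $O_a^G$ denotes the $G$-conjugacy class of $a$.

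By Lemma~\ref{degen.1}, every $\Gamma$-orbit on $G/R$ is two-dimensional and closed, with isotropy $\Gamma_y$ of dimension one whose identity component is a maximal torus of $\Gamma\cong SL_2$. Since all maximal tori of $SL_2$ are $\Gamma$-conjugate, for each $y\in G/R$ there exists $\gamma\in\Gamma$ with $\gamma a\gamma^{-1}\in\Gamma_y$, so $\gamma^{-1}y\in F$; hence the map $\mu:\Gamma\times F\to G/R$, $(\gamma,x)\mapsto\gamma x$, is surjective. Its fiber over $y$ is $\{\gamma\in\Gamma:\gamma a\gamma^{-1}\in\Gamma_y\}$, the preimage under the conjugation map $\Gamma\to O_a^\Gamma$ (whose fibers are one-dimensional cosets of $C_\Gamma(a)$) of the finite set $O_a^\Gamma\cap\Gamma_y$. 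Thus the generic fiber of $\mu$ is one-dimensional, and the fiber-dimension formula yields $\dim F=N-M-2$.

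Next I would lift to $\tilde F=\{g\in G:g^{-1}ag\in R\}\subset G$, which is a right $R$-torsor over $F$, so $\dim\tilde F=N-2$. The map $\tilde F\to O_a^G\cap R$ sending $g\mapsto g^{-1}ag$ has fibers equal to cosets of $C_G(a)$, each of dimension $k$, so $\dim(O_a^G\cap R)=N-k-2$. Since $a\neq e$, we have $e\notin O_a^G$ while $e\in R$; thus $O_a^G\cap R$ is a proper subvariety of $R$, hence of dimension at most $M-1$. Combining these gives $M\geq N-k-1$, proving the first assertion.

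For regular $a$ we have $k=n$, and moreover every element of $O_a^G\cap R$ is semisimple, so its $R$-centralizer contains a maximal torus of $R$ of dimension $m$; hence each $R$-orbit in $O_a^G\cap R$ has dimension at most $M-m$, which gives $N-n-2\leq M-m$, i.e.\ $M\geq N-n+m-2$. The asserted equality then reduces to the reverse inequality, which amounts to exhibiting an $R$-regular element $a'\in O_a^G\cap R$, so that its $R$-orbit attains dimension exactly $M-m$; equivalently, one must find $g\in G$ with $g^{-1}ag\in R$ such that the maximal torus $g^{-1}C_G(a)g$ of $G$ meets $R$ in a maximal torus of $R$. This is the main obstacle: I expect it follows by exploiting that regularity of $a$ in $G$ forces the $\sgoth\lgoth_2$-triple underlying $\Gamma$ to have a regular semisimple component (so $\Gamma$ is a principal $SL_2$-subgroup), combined with the degeneracy hypothesis constraining how maximal tori of $R$ sit with respect to $C_G(a)$.
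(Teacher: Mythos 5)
Your treatment of the first assertion is essentially correct and runs parallel to the paper's: both arguments come down to showing $\dim(O_a^G\cap R)=N-k-2$, you via the fibrations $\tilde F\to F$ and $\tilde F\to O_a^G\cap R$, the paper via the two-dimensional sets $W_{a,g}=\{(\gamma g)^{-1}a(\gamma g):\gamma\in\Gamma\}$, which sweep out $O_a^G$ and meet $R$ in finitely many points. You then extract the extra $+1$ from the properness of $O_a^G\cap R$ in $R$ (which implicitly uses irreducibility of $R$, harmless here), whereas the paper gets it by letting $a$ run over the one-dimensional torus $L\subset\Gamma$ containing it. Either route is fine.

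The genuine gap is in the second assertion, which is an \emph{equality}: you prove only $M\geq N-n+m-2$ and explicitly leave the reverse inequality as an unresolved ``obstacle,'' offering a speculation about principality of $\Gamma$ that is not needed. (Even the inequality you do prove needs one more word: a union of $R$-orbits each of dimension at most $M-m$ can have larger dimension unless there are finitely many of them; finiteness holds because $O_a^G$ meets a fixed maximal torus $T'$ of $R$ in a finite set, the map $T'\to T/W$ being finite.) The missing half follows from the picture you already set up, and is exactly how the paper argues. Since $a$ is regular in $G$, each $b\in O_a^G\cap R$ lies in a unique maximal torus of $G$, namely $C_G(b)$, and hence in a unique maximal torus of $R$ (any such torus is contained in the abelian group $C_R(b)=R\cap C_G(b)$). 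This defines a map from $O_a^G\cap R$ to the $(M-m)$-dimensional variety $R/N_R(T')$ of maximal tori of $R$; its fibers are finite (two elements of $O_a^G$ lying in the same maximal torus of $G$ differ by a Weyl group element), and it is surjective because $O_a^G\cap R$ is nonempty, stable under $R$-conjugation, and all maximal tori of $R$ are $R$-conjugate. Therefore $\dim(O_a^G\cap R)=M-m$ exactly, and comparison with $\dim(O_a^G\cap R)=N-n-2$ from the first part gives $M=N-n+m-2$. No construction of an $R$-regular element in $O_a^G\cap R$ is required, so the step you identified as the main obstacle is precisely where your proof stops short of the stated result.
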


\begin{proof}
Since the $\Gamma$-action on $G/R$ is fixed point free and
degenerate 
the isotropy group of any element $gR \in G/R$ is either $\C^*$ or
a $\Z_2$ extension of $\C^*$ by Lemma \ref{degen.1}. Recall that
this isotropy group is $\Gamma  \cap gRg^{-1}$ and
therefore, $R$ contains a unique subgroup of form $g^{-1}L'g$
where $L'$ is a $\C^*$-subgroup of $\Gamma$. That is,
$L'=\gamma_0^{-1} L \gamma_0$  for some $\gamma_0 \in \Gamma$
where $L$ is the $\C^*$-subgroup generated by $a$. Furthermore,
this $\gamma_0$ is unique modulo a normalizer of $L$ in $\Gamma$
because otherwise $\Gamma^g$ contain another $\C^*$-subgroup of
$g^{-1}\Gamma g$ and, therefore, it would be at least two
dimensional. The two-dimensional variety $W_{a,g}=\{ (\gamma
g)^{-1} a (\gamma g) | \, \gamma \in \Gamma \}$ meets $R$ exactly
at two points $(\gamma_0 g)^{-1} a (\gamma_0 g)$ and $(\gamma_0
g)^{-1} a^{-1} (\gamma_0 g)$ (since the normalizer of $L$ has two
components). Varying $g$ we can suppose that $W_{a,g}$ contains  a
general point of the $G$-orbit $O_a$ of $a$ under conjugations.
Since it meets subvariety $R\cap O_a$ of $O_a$ at two points we
see that $\dim R\cap O_a =\dim O_a-2=N-k-2$ by Lemma
\ref{dynkin.10}. Thus, with $a$ running over $L$ we have $\dim R
\geq N-k-1$.

For the second statement note $b=g^{-1}ag\in R$ is a regular
element in $G$.
Hence the maximal torus in $G$ (and, therefore, in $R$) containing
$b$ is determined uniquely. Assume that two elements
$b_l=g_l^{-1}ag_l\in R, \, l=1,2$ are contained in the same
maximal torus $T'$ of $R$ and, therefore, the same maximal torus
$T$ of $G$. Then $g_2g_1^{-1}$ belongs to the normalizer of $T$,
i.e. $b_2$ is of form $w^{-1}b_1w$ where $w$ is an element of the
Weyl group of $T$. Thus $R\cap O_a$ meets each maximal torus $T'$
at a finite number of points. The space of maximal tori of $R$ is
naturally isomorphic to $R/T_{\rm norm}'$ where $T_{\rm norm}'$ is
the normalizer of $T'$. Hence $\dim R\cap O_a = \dim R - \dim
T_{\rm norm}'=M-m$. We showed already that the last dimension is
also $N-n-2$ which implies $M=N-n+m-2$.

\end{proof}

\begin{proposition}
\label{dynkin.30} Let the assumption of Lemma \ref{dynkin.20} hold. Then $a$
cannot be a regular element of $G$.
\end{proposition}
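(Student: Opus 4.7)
I would argue by contradiction: suppose $a$ is regular in $G$ and derive a conflict between the codimension identity of Lemma \ref{dynkin.20} and the root-space structure of $\mathfrak{r} \subset \mathfrak{g}$. By that lemma, $|\Phi_R| = M - m = N - n - 2 = |\Phi_G| - 2$, where $\Phi_G, \Phi_R$ denote the root systems of $G$ and $R$. Degeneracy of the $\Gamma$-action on $G/R$ means every isotropy subgroup contains a $\C^{*}$-subgroup of $\Gamma$; in particular $R$ contains some $G$-conjugate $L_0$ of the maximal torus $L \subset \Gamma$. Since $L$ has regular elements of $G$, so does $L_0$, and $T := C_G(L_0)$ is a maximal torus of $G$. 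The maximal torus $T_R$ of $R$ containing $L_0$ equals $C_R(L_0)^{0}$; being a connected reductive subgroup of the torus $T$, it is itself a torus of dimension $m$ contained in $T$.

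Consider the restriction $\pi \colon X^{*}(T) \to X^{*}(T_R)$ applied to $\Phi_G$. Since $L_0 \subset T_R$ is $G$-regular, no root of $G$ vanishes on $T_R$, so $\pi(\Phi_G) \subset X^{*}(T_R) \setminus \{0\}$; and for each $\alpha \in \Phi_R$, the root space $\mathfrak{r}_{\alpha}$ is a one-dimensional subspace of $\mathfrak{g}^{[\alpha]} := \bigoplus_{\beta \in \pi^{-1}(\alpha)} \mathfrak{g}_{\beta}$, with $\Phi_R \subset \pi(\Phi_G)$. Combining $|\Phi_R| = |\Phi_G| - 2$ with the $\beta \mapsto -\beta$ symmetry of both root systems and the inequality $|\Phi_G| - |\pi(\Phi_G)| \leq 2$, two scenarios arise: either (i) $\pi$ is injective on $\Phi_G$, in which case $\Phi_R = \pi(\Phi_G \setminus \{\pm\gamma\})$ for a single root $\gamma \in \Phi_G$ and $\mathfrak{r} = \mathfrak{t}_R \oplus \bigoplus_{\beta \in \Phi_G \setminus \{\pm\gamma\}} \mathfrak{g}_{\beta}$; or (ii) exactly one pair of opposite root-pairs $\{\pm\beta_1^{0}, \pm\beta_2^{0}\}$ collapses to $\{\pm\alpha_0\}$ under $\pi$, whence $\Phi_R = \pi(\Phi_G)$ and $\mathfrak{r}_{\alpha_0}$ is a line inside the two-dimensional $\mathfrak{g}_{\beta_1^{0}} \oplus \mathfrak{g}_{\beta_2^{0}}$ (with the analogous statement for $\mathfrak{r}_{-\alpha_0}$).

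In case (i), Lie-bracket closure of $\mathfrak{r}$ demands that $\Phi_G \setminus \{\pm\gamma\}$ be closed under those root sums that lie in $\Phi_G$. Since $\mathfrak{g}$ has rank $\geq 2$ (as $\mathfrak{g} \neq \mathfrak{sl}_2$), the Dynkin diagram of $\mathfrak{g}$ is connected; picking a simple root $\alpha_j$ adjacent to $\gamma$ yields the decomposition $\gamma = (\gamma + \alpha_j) + (-\alpha_j)$ with both summands in $\Phi_G \setminus \{\pm\gamma\}$, contradicting closure. In case (ii), the same principle applied to sum decompositions $\beta_i^{0} = \beta + \beta'$ with $\beta, \beta' \in \Phi_G \setminus \{\pm\beta_1^{0}, \pm\beta_2^{0}\}$ for both $i = 1, 2$ would force $\mathfrak{r}_{\alpha_0}$ to equal both $\mathfrak{g}_{\beta_1^{0}}$ and $\mathfrak{g}_{\beta_2^{0}}$, which is impossible in a single one-dimensional subspace. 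The main obstacle is confirming that such decompositions exist simultaneously for both $i$ in every irreducible root system of rank $\geq 2$ where case (ii) can actually occur; the few small-rank exceptions are ruled out separately by the incompatibility of $\Phi_R$ being a reduced root system with $|\Phi_R| = |\Phi_G| - 2$ in those cases.
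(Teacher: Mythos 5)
Your reduction to a root-space computation is sound as far as it goes, and your case (i) is essentially complete: once $\mathfrak{r}$ is forced to be $\mathfrak{t}_R\oplus\bigoplus_{\beta\neq\pm\gamma}\mathfrak{g}_\beta$, writing $\gamma$ as a sum of two roots distinct from $\pm\gamma$ (possible in any irreducible system of rank $\geq 2$) contradicts closure under the bracket. But case (ii) contains a genuine gap, which you yourself flag: you never establish that each collapsed root $\beta_i^{0}$ admits a decomposition $\beta_i^{0}=\beta+\beta'$ with both summands outside $\{\pm\beta_1^{0},\pm\beta_2^{0}\}$, and this can fail (e.g.\ in $A_2$ with $\beta_1^{0},\beta_2^{0}$ the two simple roots, the only decomposition of $\beta_1^{0}$ uses $-\beta_2^{0}$). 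Your proposed fallback --- that the surviving configurations are killed because $\Phi_R$ could not be a reduced root system of the right cardinality --- is asserted but not carried out, and it is not evident that the two escape routes together cover all irreducible types. So as written the proof does not close.

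The paper resolves exactly this hard case by an input you never invoke: $a$ is not just a regular semisimple element of $G$ but the semisimple member of an $\mathfrak{sl}_2$-triple, so by Bourbaki (Ch.~8.11.4, Prop.~8) $\operatorname{ad}(a)$ has eigenvalue $2$ on every simple root vector and strictly larger even eigenvalues on the non-simple positive root vectors. Since $\mathfrak{r}$ is $\operatorname{ad}(a)$-invariant (as $a\in\mathfrak{r}$ after conjugation), the hyperplanes $\rgoth_\pm'\subset\rgoth_\pm$ must be graded, which forces the defining linear functionals $\sum_i d_i^{\pm}c_i^{\pm}$ to be supported on the simple root coordinates only; the paper then finishes with explicit bracket computations ($h_i=[x_i+x_j,y_i+y_k]$, $[[x_2,x_3],[y_2,y_3]]=h_2-h_3$ up to constants) showing $\hgoth'=\hgoth$, a contradiction, and disposes of rank $2$ by the dimension count of Lemma \ref{dynkin.20}. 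If you want to complete your version, the cleanest fix is to import this eigenvalue grading: it immediately rules out your case (ii) whenever $\beta_1^{0}$ and $\beta_2^{0}$ have different $\operatorname{ad}(a)$-eigenvalues and reduces the remaining situation to one the bracket computations handle. Without some use of the $\mathfrak{sl}_2$ hypothesis, your purely combinatorial route is not demonstrably complete.
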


\begin{proof}
Assume the contrary, i.e. $a$ is regular. Let $\ggoth$ be the Lie
algebra of $G$, $\hgoth$ be its Cartan subalgebra, $\rgoth_+$
(resp. $\rgoth_-$) be the linear space generated by positive
(resp. negative) root spaces. Set $\sgoth =\rgoth_++\rgoth_-$ and
suppose that $\ggoth', \hgoth', \sgoth'$ be the similar objects
for $R$ with $\hgoth' \subset \hgoth$. Put
 $\rgoth_\pm'= \sgoth' \cap \rgoth_\pm$.

Each element of a root space $x'$ from $\sgoth'$ is of form
$x'=h_0+x_++x_-$ where $h_0\in \hgoth$ and $x_\pm \in \rgoth_\pm$.
Then there exists element $h' \in \hgoth'$ such that the Lie
bracket $[x' ,h']$ is a nonzero multiple of $x'$ which implies
that $h_0=0$, since $[h_0,h']=0$. Thus $\sgoth' \subset \sgoth$.
By assumption $\hgoth'$ is a linear subspace of $\hgoth$ of
codimension $n-m$. Hence Lemma \ref{dynkin.20} implies that
$\sgoth'$ is of codimension 2 in $\sgoth$. We have two
possibilities: (1) either, say, $\rgoth_+' =\rgoth_+$ and
$\rgoth_-'$ is of codimension 2 in $\rgoth_-$ or (2) $\rgoth_\pm'$
is of codimension 1 in $\rgoth_\pm$. In the first case each
element of a root space $x \in \rgoth_+$, being in an eigenspace
of $\hgoth' \subset \hgoth$, is also an element of a root space of
$\ggoth'$. However for each root the negative of it is also
contained in $\ggoth'$ which implies that $\rgoth_-' =\rgoth_-$. A
contradiction.

In case (2) consider the generators $x_1, \ldots , x_l$ (resp.
$y_1, \ldots , y_l$) of all root spaces in $\rgoth_+$ (resp.
$\rgoth_-$) such that $h_i=[x_i,y_i]$ is a nonzero element of
$\hgoth$. 
Their linear combination $\sum_{i=1}^lc_i^+x_i$ is contained in
$\rgoth_+'$ if and only if its coefficients satisfy a nontrivial
linear equation $\sum_{i=1}^ld_i^+c_i^+=0$. Similarly
$\sum_{i=1}^lc_i^-y_i$ is contained in $\rgoth_-'$ if and only if
its coefficients satisfy a nontrivial linear equation
$\sum_{i=1}^ld_i^-c_i^-=0$. Note that $d_i^+=0$ if and only if
$d_i^-=0$ since otherwise one can find a root of $\ggoth'$ whose
negative is not a root. Without loss of generality we suppose that
the simple roots are presented by $x_1, \ldots , x_{n}$, i.e.
$h_1, \ldots , h_{n}$ is a basis of $\hgoth$. Hence at least one
coefficient $d_i^+ \ne 0$ for $i \leq n$. Indeed, otherwise
$\rgoth_+'$ contains $x_1, \ldots , x_{n}$ which implies that $\rgoth_+'=
\rgoth_+$ contrary to our assumption.
Note that $[a, x_i]=2x_i$ for $i \leq n$ (\cite{Bo},
Chapter 8.11.4, Proposition 8). Furthermore, since any $x_j, \, j
\geq n+1$ is a Lie bracket of simple elements one can check via
the Jacobi identity that $[a,x_j]=s x_j$ where $s$ is an even
number greater than 2. If we assume that $d_j^+ \ne 0$ then a
linear combination $x_i+cx_j, \, c \ne 0$ is contained in
$\rgoth_+'$ for some $x_i, \, i \leq n$ . Taking Lie bracket with
$a$ we see that $2x_i+scx_j \in \rgoth_+'$. Hence $x_j \in
\rgoth_+'$, i.e. $d_j^+ = 0$ which is absurd. Thus $d_k^+ \ne 0$
only for $k \leq n$. We can suppose that $d_i^+ \ne 0$ for $i \leq
l_o \leq n$ and $d_j^+=0$ for any $j \geq l_o+1$. Note that $h_j
\in \hgoth'$. If $l_o \geq 3$ pick any three distinct numbers
$i,j$, and $k \leq l_o$. Then up to nonzero coefficients $x_i+x_j
\in \rgoth_+'$ and $y_i+y_k \in \rgoth_-'$. Hence
$h_i=[x_i+x_j,y_i+y_k] \in \hgoth'$, i.e. $\hgoth' =\hgoth$. In this case
we can find $h \in \hgoth'$ such that $[h,x_i]=s_ix_i$ and $[h,x_j]=s_jx_j$ with
$s_i \ne s_j$. As before this implies that $x_i \in \rgoth_+'$
which is a contradiction. Thus we can suppose that at most $d_1^+$
and $d_2^+$ are different from zero.

If $l_0\leq 2$ and $n\geq 3$
we can suppose that $[x_2,x_3]$ is a nonzero nilpotent element.
The direct computation shows that up to nonzero coefficients
 $[[x_2,x_3],[y_2,y_3]]$ coincides with $h_2-h_3$. Since $h_3 \in \hgoth'$, so is $h_2$.
The same argument works for $h_1$, i.e. $\hgoth' =\hgoth$ again which leads to a contradiction as before.
If $n= 2$ then the rank $m$ of $R$ is 1 (since we do not want $\hgoth'=\hgoth$), i.e. $R$ is either
$\C^*$ or $SL_2$. In both cases
$\dim R< \dim G -n+m-2$ contrary to
Lemma \ref{dynkin.20} which yields the desired conclusion.


\end{proof}

Combining this result with Definition \ref{principal} and Proposition \ref
{prin.10} we get the following.

\begin{corollary}
\label{dynkin.40} Let $G$ be a simple Lie group and $R$ be its reductive
non-principal subgroup. Then for the principal $SL_2$-subgroup $\Gamma < G$
we have $\Gamma^{g_0}$ is finite for some $g_0 \in G$ and $\Gamma^g$ is
different from $g^{-1}\Gamma g$ for any $g \in G$.
\end{corollary}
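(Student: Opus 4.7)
The plan is to take $\Gamma$ to be a principal $SL_2$-subgroup of $G$ (which exists by Definition \ref{principal}) and verify both claims for this choice. The first assertion, that $\Gamma^g \ne g^{-1}\Gamma g$ for every $g \in G$, is immediate from the definition of principality: every conjugate $g^{-1}\Gamma g$ is itself a principal $SL_2$-subgroup of $G$, so if any such conjugate were contained in $R$ then $R$ would contain a principal $SL_2$-subgroup and hence be principal, contrary to hypothesis. Equivalently, the natural $\Gamma$-action on $G/R$ is fixed point free.

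For the second assertion I argue by contradiction: suppose $\Gamma^g = \Gamma \cap gRg^{-1}$ is not finite for any $g \in G$. This intersection is a closed reductive subgroup of $\Gamma \simeq SL_2$; reductivity follows from Matsushima's criterion (Proposition \ref{matsu}), since the $\Gamma$-orbit of $gR$ is an affine subvariety of the affine variety $G/R$. By the first part no such isotropy is all of $\Gamma$, and by assumption none is finite, so Lemma \ref{isosl2} forces each one to be either $\C^*$ or its $\Z_2$-extension. Thus the $\Gamma$-action on $G/R$ is fixed point free and degenerate in the sense of Lemma \ref{degen.1}, and the hypotheses of Proposition \ref{dynkin.30} are satisfied.

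Proposition \ref{dynkin.30} then asserts that no non-identity semi-simple element of $\Gamma$ is regular in $G$. But $\Gamma$ being principal means that the semi-simple generator $h$ of its standard $\sgoth\lgoth_2$-triple is regular in $\ggoth$, i.e.\ $\ker(\operatorname{ad} h)$ is a Cartan subalgebra. Using $\operatorname{Ad}(\exp(th)) = \exp(t\,\operatorname{ad} h)$, one sees that for generic $t \in \C$ the centralizer of the semi-simple element $\exp(th) \in \Gamma \subset G$ coincides with the Cartan subgroup attached to $\ker(\operatorname{ad} h)$, so $\exp(th)$ is regular in $G$. This contradicts Proposition \ref{dynkin.30} and completes the proof.

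The main obstacle I expect to spell out carefully is the passage from regularity of $h$ at the Lie-algebra level to regularity of the group element $\exp(th)$: one must verify that for generic $t$ no extra elements (arising from roots taking integral values of $t/(2\pi i)$) enlarge the centralizer beyond the Cartan subgroup. This is standard, but is the one point where the clean algebro-geometric argument needs a little Lie-theoretic care to make the contradiction with Proposition \ref{dynkin.30} watertight.
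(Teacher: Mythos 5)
Your proof is correct and follows essentially the same route the paper intends, which states the corollary as an immediate combination of Definition \ref{principal} and Proposition \ref{prin.10} (a non-principal reductive $R$ contains no conjugate of the principal $\Gamma$, giving fixed point freeness) with Proposition \ref{dynkin.30} (a fixed point free degenerate action would force every non-identity semi-simple element of $\Gamma$ to be non-regular, contradicting principality). Your added care in passing from regularity of $h$ in $\ggoth$ to regularity of the group element $\exp(th)$ for generic $t$ is a reasonable refinement of a point the paper glosses over.
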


\begin{lemma}
\label{p1}Let $R$ be a principal subgroup of $G$. Then there exists an $%
SL_{2}$-subgroup $\Gamma <G$ such that $\Gamma ^{g_{0}}$ is finite for some $%
g_{0}\in G$ and $\Gamma ^{g}$ is different from $g^{-1}\Gamma g$ for any $%
g\in G$.\label{principal1}
\end{lemma}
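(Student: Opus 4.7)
The plan is to parallel the proof of Proposition~\ref{prin.10} (which gives condition (a) alone) while additionally arranging the finiteness condition (b). In either case I would argue (b) by contradiction: if $\Gamma^g$ is infinite for every $g\in G$, then by (a) and Lemma~\ref{degen.1} the $\Gamma$-action on $G/R$ is fixed-point-free and degenerate, so Lemma~\ref{dynkin.20} applies and forces $\dim C_G(a)\geq \dim G-\dim R-1$ for every non-identity semi-simple $a\in\Gamma$.

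Suppose first that $R$ has rank~$1$; then $R$ itself is a principal $SL_2$-subgroup of $G$. I would pick $\Gamma$ to be the $SL_2$-subgroup attached to the $\mathfrak{sl}_2$-triple $(e_\alpha,h_\alpha,f_\alpha)$ of a suitably chosen simple root $\alpha$ of $G$. Such a $\Gamma$ is non-principal, hence not $G$-conjugate to $R$; since $\dim\Gamma=\dim R=3$, this immediately yields (a). For (b) the degeneracy assumption forces $\dim C_G(T_\Gamma)\geq N-4$, where $N=\dim G$. On the other hand $\dim C_G(T_\Gamma)=N-2r$, where $r$ is the number of positive roots of $G$ non-orthogonal to $h_\alpha$. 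In any simple root system of rank~$\geq 2$ one can pick $\alpha$ with $r\geq 3$ (any simple root works: $\alpha$ itself, any root whose expansion in the simple basis involves $\alpha$, and any simple root adjacent to $\alpha$ in the Dynkin diagram all give non-zero pairings with $\alpha^\vee$); hence $\dim C_G(T_\Gamma)\leq N-6$, contradicting the previous bound.

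If $R$ has rank $\geq 2$, I would consult the Dynkin classification of principal reductive subgroups, namely the six families (1)--(6) listed in the proof of Proposition~\ref{prin.10}. In each such pair $(G,R)$ Proposition~\ref{prop.30} furnishes an $SL_2$-subgroup $\Gamma$ associated to a nilpotent orbit of $\mathfrak{g}$ not realized in $\mathfrak{r}$, yielding~(a). For~(b) the bound from Lemma~\ref{dynkin.20} gives the desired contradiction whenever $\dim G/R$ is large enough relative to the centralizer dimensions of non-principal $\mathfrak{sl}_2$-triples in $\mathfrak{g}$. The main obstacle lies in the borderline cases where this inequality is tight---the most acute being $(G,R)=(SL_4,Sp_4)$, for which the only admissible $\Gamma$ is the $(3,1)$-nilpotent $SL_2$ with $\dim C_G(T_\Gamma)=5\geq\dim G/R-1=4$, so Lemma~\ref{dynkin.20} alone is inconclusive. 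In such exceptional cases I would instead decompose the tangent representation $\mathfrak{g}/\mathfrak{r}$ at $eR$ as a $\Gamma$-module and verify directly that the resulting action of $\Gamma$ on $G/R$ admits a three-dimensional orbit, which yields the finite isotropy required by~(b).
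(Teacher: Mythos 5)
Your overall strategy --- secure condition (a), then obtain the finiteness (b) by contradiction via Lemma \ref{dynkin.20} applied to a fixed-point-free degenerate action --- is the same as the paper's, and your rank-one argument with a root $SL_2$ is sound. The gap is in the rank $\geq 2$ case. Proposition \ref{prop.30} is a pure counting statement: it guarantees that \emph{some} nonzero nilpotent $G$-orbit meets no conjugate of $\mathfrak{r}$, but it does not tell you \emph{which} one, so you have no control over $\dim C(a)$ for the semisimple element $a$ of the resulting $\Gamma$ --- and that is precisely the quantity Lemma \ref{dynkin.20} needs. This is not only an issue for $(A_3,C_2)$. For example, in $B_3\supset G_2$ a contradiction requires $\dim C(a)<\dim G-\dim R-1=6$, yet the minimal orbit $(2,2,1,1,1)$ of $\mathfrak{so}_7$ has $\dim C(a)=7$ and $(3,1,1,1,1)$ has $\dim C(a)=11$, so if the counting argument happened to hand you one of those subgroups the dimension count would be inconclusive; likewise the subregular orbit $(5,3)$ in $D_4\supset B_3$ gives $\dim C(a)=6$, which is exactly borderline. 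The paper resolves this by committing to a \emph{specific} $\Gamma$ --- the subregular $SL_2$, whose semisimple element has the uniformly small centralizer of dimension $\mathrm{rank}\, G+2$ --- and then proving (a) \emph{for that particular subgroup} case by case: the partition $(r,1)$ is neither symplectic nor orthogonal (for $A_{2l-1}\supset C_l$ and $A_{2l}\supset B_l$), the $(2l-3,3)$-subgroup of $SO_{2l}$ fixes no vector (for $D_l\supset B_{l-1}$), triality does not fix the subregular triple (for $G_2$ inside $B_3$, $D_4$, $A_6$), and the Landsberg--Manivel--Westbury lifting data exclude $F_4\subset E_6$. None of this is supplied by Proposition \ref{prop.30}, and it is the real content of the lemma.

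Your fallback for the borderline cases is also only a plan, and as stated it does not quite parse: $\Gamma$ does not fix $eR$, so it does not act on $T_{eR}(G/R)=\mathfrak{g}/\mathfrak{r}$. What does work --- and is essentially what the paper does for $SO_{2l-1}\subset SO_{2l}$ by exhibiting a vector $v$ with trivial $\Gamma_{sr}$-stabilizer --- is to realize $G/R$ as a $G$-orbit in a linear representation and analyze the $\Gamma$-module structure there; for $SL_4/Sp_4$ one would check that $\Lambda^2\mathbb{C}^4$ restricted to the $(3,1)$-subgroup is $V_3\oplus V_3$ with generically finite stabilizers on the relevant quadric orbit. You would need to carry out such an argument explicitly in each case where the dimension count fails, and first to determine the full list of those cases, which is longer than the single pair you name.
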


\textbf{Proof. }Recall that the subregular nilpotent orbit is the
unique nilpotent orbit of codimension ${\rm rank}\, \frak{g}+2$ in
$\frak{g}$ \cite[Section 4.1]{CM}. It can be characterized as the
unique open orbit in
the boundary of the principal nilpotent orbit. The corresponding $\frak{sl}%
_{2}$-triple $(X,H,Y)$ in $\frak{g}$ is also called subregular.
The dimension of the centralizer of the semisimple subregular
element $H$ in this triple is ${\rm rank}\, \frak{g}+2$ \cite{CM}.
We denote the subregular $SL_{2}$ subgroup of $G$ by $\Gamma
_{sr}$.\\[2ex]

\begin{tabular}{|l|l|l|l|}
\hline $G$ & $R$ & $ {\rm rank} \, G+3$ & $\dim G-\dim R$ \\
\hline $B_{3}$
& $G_{2}$ & $6$ & $7$ \\ \hline $D_{4}$ & $G_{2}$ & $7$ & $14$ \\
\hline $A_{6}$ & $G_{2}$ & $9$ & $34$ \\ \hline $E_{6}$ & $F_{4}$
& $9$ & $26$ \\ \hline $A_{2l-1}$ & $C_{l}$ & $2l+2$ &
$l(2l-1)-1$ \\ \hline $A_{2l}$ & $B_{l}$ & $2l+3$ & $2l^{2}+3l$ \\
\hline $D_{l}$ & $B_{l-1}$ & $l+3$ & $2l-1$ \\ \hline
\end{tabular}\\[2ex]

We will demonstrate that in the cases listed in the table above, no
conjugates of $\Gamma _{sr}$ can belong to $R$. Then by Lemma \ref{dynkin.20}
the statement of the current Lemma follows whenever $\dim G-\dim R\frak{\,>}%
{\rm rank}\, G+3$. From the table above we see that it covers all
the principal embeddings from the proof of Proposition
\ref{prin.10}, with the exceptions of the inclusions $B_{3}\subset
D_{4}$ and $C_{2}\subset A_{3}$.

For $G=A_{r}$, the subregular $\frak{sl}_{2}$ corresponds to the partition $%
(r,1)$. If $r$ is odd, this partition is not symplectic (since in symplectic
partitions all odd entries occur with even multiplicity), and if $r$ is
even, this partition is not orthogonal (since in orthogonal partitions all
even entries occur with even multiplicity). In other words, the subregular $%
SL_{2}$-subgroup $\Gamma _{sr}$ in $A_{2l-1}$ (respectively, $A_{2l}$) does
not preserve any nondegenerate symplectic (resp., orthogonal) form on $%
\mathbb{C}^{2l}$ ($\mathbb{C}^{2l+1}$) and thus does not belong to $R=C_{l}$
(resp., $R=B_{l}$). The same is true for any conjugate of $\Gamma _{sr}$ in $%
G$.

If $G=D_{l}$, the embedding of $R=SO_{2l-1}$ in $SO_{2l}$ is defined by the
choice of the nonisotropic vector $v\in \mathbb{C}^{2l}$ which is fixed by $R
$. The subregular $\frak{sl}_{2}$ in $\frak{so}_{2l}$ corresponds to the
partition $(2l-3,3)$. Thus we see that $\Gamma _{sr}\subset SO_{2l}$ does
not fix any one-dimensional subspace in $\mathbb{C}^{2l}$ (its invariant
subspaces have dimensions $2l-3$ and $3$) and thus none of its conjugates
can belong to $R$. Moreover, we can choose $v$ such that $xv\neq v$ for $%
x\in \Gamma _{sr}$, $x\neq 1$. Thus $\Gamma _{sr}\cap $ $SO_{2l-1\text{ }%
}=\left\{ e\right\} $. This establishes the desired conclusion for
the embeddings $SO_{7}\subset SO_{8}$ (i.e. the case of $B_3
\subset D_4$) and $SO_{5}\subset SO_{6}$ (i.e. the case of $B_2
\simeq C_2 \subset A_3 \simeq D_3$), in which the dimension count
of Lemma \ref{dynkin.20} by itself is not sufficient.

The alignments of $\frak{sl}_{2}$-triples in exceptional cases were analyzed
in \cite{LMW}. In particular, it was observed there that any conjugacy class
of $\frak{sl}_{2}$-triples in $\frak{f}_{4}$ lifts uniquely to a conjugacy
class of $\frak{sl}_{2}$-triples in $\frak{e}_{6}$. \ Consulting the
explicit correspondence given in \cite[2.2]{LMW}, we observe that the
largest non-principal nilpotent orbit in $\frak{e}_{6}$ which has nonempty
intersection with $\frak{f}_{4}$ has codimension 10. This implies that no $%
\frak{sl}_{2}$-triple in $\frak{f}_{4}$ lifts to a subregular $\frak{sl}_{2}$
in $\frak{e}_{6}$. In other words, no conjugate of $\Gamma _{sr}\subset E_{6}
$ belongs to $F_{4}$.

When $R=G_{2}$, its embedding in $SO_{8}$ is defined by the triality
automorphism $\tau :SO_{8}\rightarrow SO_{8}$, with $R$ being a fixed point
group of this automorphism. Equivalently, $R=$ $SO_{7}\cap \tau \left(
SO_{7}\right) $. In particular, only those $\frak{sl}_{2}$-triples in $\frak{%
so}_{8}$ which are fixed under the triality automorphism belong to $\frak{g}%
_{2}$. Observe that the subregular $(5,3)$ $\frak{sl}_{2}$-triple is not
fixed by triality (cf. \cite[Remark 2.6]{LMW}). Similarly, the subregular$%
\frak{\ sl}_{2}$-triple in $\frak{so}_{7}$ corresponds to the partition $%
(5,1,1)$. Since the $\left( 5,1,1,1\right) $ $\frak{sl}_{2}$-triple in $%
\frak{so}_{8}$ is not invariant under triality, neither is subregular $\frak{%
sl}_{2}$ in $\frak{so}_{7}$. Thus no conjugates of $\Gamma _{sr}$ in $B_{3}$
or $D_{4}$ are fixed by $\tau $, and no conjugates of $\Gamma _{sr}$ belong
to $G_{2}$.

Finally, when $G=A_{6}$, the subregular triple in $A_{6}$ does not
belong to $B_{3}$ (see above), and thus none of its conjugates lie
in $G_{2}\subset B_{3}$. \hspace{2.5in} $\square $

\medskip

Now Theorem \ref{dynkin} follows immediately from  the combined statements
of Corollary \ref{dynkin.40} and Lemma \ref{p1}.

\begin{remark}\label{fabrizio.31} Note that we proved slightly more than required. Namely,
the $SL_2$-subgroup $\Gamma$ in Theorem \ref{dynkin} can be chosen either principal
or subregular.

\end{remark}

\providecommand{\bysame}{\leavevmode\hboxto3em{\hrulefill}\thinspace}

\end{document}